\newcommand{\ep}{\varepsilon}
\newcommand{\ssq}{\subseteq}
\newcommand{\AB}[1]{ow}
\newtheorem{theorem}{Theorem}[section]
\newtheorem{proposition}[theorem]{Proposition}
\newtheorem{question}[theorem]{Question}
\newtheorem{conjecture}[theorem]{Conjecture}
\newtheorem*{definition}{Definition}
\newtheorem{corollary}[theorem]{Corollary}
\newtheorem{lemma}[theorem]{Lemma}
\newtheorem*{remark}{Remark}
\begin{document}

\title{Subgraphs of weakly quasi-random oriented graphs}

\author{\quad\\Omid Amini\,$^\ast$, \ Simon Griffiths\,$^\dagger$, \ and \
  Florian Huc\,$^\ddagger$\\[3mm]
  $^\ast$\, CNRS -- DMA, \emph{\'Ecole Normale Sup\'erieure, Paris,
    France}\\[1mm]
  $^\dagger$\,\emph{IMPA, Est. Dona Castorina 110, Jardim Bot\^anico, Rio de Janeiro, Brazil}\\[1mm]
  $^\ddagger$\,\emph{CUI, Universit\'e de Gen\`eve, Geneva, Switzerland}}

\maketitle

{\renewcommand{\thefootnote}{\relax} \footnotetext{
E-mail\,: \texttt{oamini-at-dma.ens.fr},
    \texttt{sgriff-at-cs.mcgill.ca}, \texttt{florian.huc-at-unige.ch}}}

\begin{abstract} It is an intriguing question to see what kind of information on the structure of an oriented graph $D$ one can obtain if $D$ does not contain a fixed oriented graph $H$ as a subgraph. 
The related question in the unoriented case has been an active area of research,
 and is relatively well-understood in the theory of quasi-random graphs and extremal combinatorics.  

 In this paper, we consider the simplest cases of such a general question for oriented graphs,
 and provide some results on the global behavior of the orientation of  $D$.  
For the case that $H$ is an oriented four-cycle we prove: in every $H$-free oriented graph $D$,
 there is a pair $A,B\ssq V(D)$ such that $e(A,B)\ge e(D)^{2}/32|D|^{2}$ and $e(B,A)\le e(A,B)/2$.  
We give a random construction which shows that this bound on $e(A,B)$ is best possible 
(up to the constant).  
In addition, we prove a similar result for the case $H$ is an oriented six-cycle, 
and a more precise result in the case $D$ is dense and $H$ is arbitrary.  
We also consider the related extremal question in which no condition is put on the oriented graph $D$, 
and provide an answer that is best possible up to a multiplicative constant. 
Finally, we raise a number of related questions and conjectures.\end{abstract}

\section{Introduction}

Many results in graph theory show that local properties of a graph have global consequences.  
An example, which is particularly relevant to us, comes from the theory of quasi-randomness \cite{CGW}, 
and shows that if a large dense graph $G$ does not contain a fixed graph $H$ as a subgraph, 
then it must contain a large (linear-size) subgraph whose density is significantly 
different from that of the original graph.  
Thus, the restriction that $G$ does not contain $H$ as a subgraph forces $G$ 
to exhibit irregular behaviour on a global scale.  
In another direction, it has been conjectured by Erd\H os and Hajnal \cite{EH89} 
that for each graph $H$, there is a constant $\alpha >0$ such that every graph $G$ 
that does not contain an induced copy of $H$, must have a clique or independent set of size 
at least $n^{\alpha}$.  
A partial result, proving the existence of a clique or independent set of size $\exp(c \sqrt{\log{n}})$, has been given \cite{EH89} (see also \cite{EH77}).

We prove results for oriented graphs which also show that large scale irregularities can 
be deduced from local properties.  
In this sense our results are quasi-randomness type results for oriented graphs.  
That said, they differ significantly in style from the work on quasi-randomness 
by Thomason \cite{Thom1, Thom2}; Chung, Graham and Wilson \cite{CGW}; and
 Chung and Graham \cite{CG}, which is of a more precise form and applies to dense graphs only.  
In fact, \cite{CG} is concerned with tournaments (orientations of complete graphs), 
and does indeed include a proof that excluding a fixed tournament 
as a subgraph (or even bounding the number of copies away from the expected value) 
does lead to consequences on a global scale 
(for example, vertex subsets $X,Y$ with $|e(X,Y)-e(Y,X)|=\Omega(n^{2})$).  
They actually prove much more -- that eleven separate quasi-randomness conditions are equivalent.  
A more direct analogue of these results for general (dense) oriented graphs 
has been considered by the second author \cite{Gri09}.

Throughout the article we write $D$ for an oriented graph, $n$ for the number of vertices of $D$, 
and $e(D)$ for the number of arcs (oriented edges) in $D$.  For a pair of (non-necessarily disjoint) subsets $A,B\ssq V$,
 we write $e(A,B)$ for the number of arcs from $A$ to $B$, i.e., the size of the set 
$E(A,B)=\{\vec{xy}\in E(D):x\in A, y\in B\}$.  We say that a subgraph $E(A,B)$ is \emph{biased} 
if $e(B,A)\le e(A,B)/2$.  (The choice of the fraction $\frac{1}{2}$ is rather arbitrary, 
it could be replaced by any $\eta\in (0,1)$ without any significant effect on our results.)  
We write $bias(D)$ for the size of the largest biased subgraph in $D$:
\begin{equation*} bias(D):=\max\Big\{e(A,B) :\, A,B\ssq V\, \text{such that}\, e(B,A)\le \frac{e(A,B)}{2}\Big\}\, .\end{equation*}

The parameter $bias(D)$ measures irregularities in the orientation of $D$.  
If $bias(D)$ is small, then one might say that $D$ has a random-like orientation. 
 For example, if $D$ is obtained by orienting edges at random, 
then $bias(D)=O(n)$ with high probability (see Lemma \ref{random}).  In general, however,
 $bias(D)$ may be as large as $e(D)$, the number of arcs of $D$.  We now state our main result.

\begin{theorem}\label{fourcycle} There exists a constant $\ep>0$ such that every oriented graph $D$ with $bias(D)<\ep e(D)^{2}/n^{2}$ contains an oriented four-cycle.\end{theorem}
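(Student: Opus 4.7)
My plan rests on a key structural observation: if $D$ is $C_4^{\ra}$-free, then for every arc $u \ra w$ of $D$, $e(N^+(w), N^-(u)) = 0$, because any arc $v \ra y$ with $v \in N^+(w)$ and $y \in N^-(u)$ would complete a directed $4$-cycle $u \ra w \ra v \ra y \ra u$ on four distinct vertices (distinctness is forced by the oriented-graph condition). Consequently, the pair $(A, B) := (N^-(u), N^+(w))$ satisfies $e(B, A) = 0$ and is biased, so $bias(D) \ge e(N^-(u), N^+(w))$ for every arc $u \ra w$ of $D$. The task then reduces to exhibiting some arc with $e(N^-(u), N^+(w)) \ge \ep\, e(D)^2/n^2$.

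I would proceed by a case analysis on the degree structure. Let $M := e(D)$ and $I := \tfrac{1}{2}\sum_v |d^+(v) - d^-(v)|$. When $M \le n/\sqrt{\ep}$, the trivial bound $bias(D) \ge \max_v d^+(v) \ge M/n \ge \ep M^2/n^2$ already suffices. When $I \ge c_1 M^2/n^2$ for a small constant $c_1$, I would exploit the net flow $e(V^+, V^-) - e(V^-, V^+) = I$ across the partition $V^+ := \{v : d^+(v) > d^-(v)\}$, and then refine this partition (iteratively discarding vertices that contribute disproportionately to the back-flow) to produce a biased pair of size at least a constant multiple of $I$. Otherwise $I$ is small, and Cauchy--Schwarz applied to the identity $d^+(v) d^-(v) = \tfrac{1}{4}((d^+(v) + d^-(v))^2 - (d^+(v) - d^-(v))^2)$ yields a large cherry count $P := \sum_v d^+(v) d^-(v) \ge c_2 M^2/n$. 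Summing the key inequality over arcs and switching the order of summation,
\[
\sum_{u \ra w} e(N^-(u), N^+(w)) \;=\; \sum_{x \ra y} e(N^+(x), N^-(y)) \;=:\; \Theta,
\]
which counts $4$-tuples $(x, u, w, y)$ carrying the four arcs $x \ra y$, $x \ra u$, $u \ra w$, $w \ra y$ (``theta'' configurations). I would then aim to establish $\Theta \ge c_3 M^3/n^2$ via a further Cauchy--Schwarz estimate leveraging the large cherry count $P$, and average over the $M$ arcs to obtain one with $e(N^-(u), N^+(w)) \ge c_3 M^2/n^2$ as required.

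The main technical obstacle will be the balanced dense regime. The blow-up of a directed triangle (vertex classes $X_0, X_1, X_2$ with all arcs $X_i \ra X_{i+1}$) is $C_4^{\ra}$-free with balanced degrees and $bias = \Theta(M^2/n^2)$, yet $e(N^-(u), N^+(w)) = 0$ for every arc of $D$ (and indeed $\Theta = 0$): the witnessing biased pair is $(N^-(w), N^+(u))$, which is \emph{not} produced directly by the key observation. Circumventing this will likely require supplementing the argument, for instance by simultaneously handling both orderings of the neighborhood pair, by passing to pairs built from directed paths of length two (using $e(N^-(u), N^{++}(u)) = 0$, which follows from the same $4$-cycle argument), or by a direct regularity-type decomposition of $D$ into homogeneous vertex classes, so that the averaging is insensitive to bias arising from cyclic blow-up-like structures.
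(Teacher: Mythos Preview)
Your key observation is correct and your diagnosis of the obstruction is honest, but the gap you flag in the last paragraph is genuine and none of the three suggested patches closes it. The triangle blow-up has $I=0$, so it falls into your Case~3; it has $\Theta=0$, so averaging over arcs yields nothing; and it also defeats the second-neighbourhood variant. On the latter: the vanishing you state, $e(N^{-}(u),N^{++}(u))=0$, is actually false --- take $y\ra u$, $u\ra z\ra v$, $y\ra v$, with no $4$-cycle present. The correct statement is $e(N^{++}(u),N^{-}(u))=0$, which makes $(N^{-}(u),N^{++}(u))$ the candidate biased pair; but in the triangle blow-up both sets equal the same class $X_{2}$, so $e(N^{-}(u),N^{++}(u))=0$ as well. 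As for ``both orderings'': you correctly note that $(N^{-}(w),N^{+}(u))$ happens to witness the bias in the triangle blow-up, but $C_{4}$-freeness does \emph{not} force $e(N^{+}(u),N^{-}(w))=0$ in general (the configuration $u\ra a\ra b\ra w$ together with $u\ra w$ contains no $4$-cycle), so there is no mechanism by which your key observation produces this pair. That leaves only the vague regularity suggestion, which is not a proof.

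The paper's argument avoids the obstruction by using the hypothesis in the forward direction rather than the contrapositive one you take. It never tries to exhibit, in an arbitrary $C_{4}$-free graph, a pair $(A,B)$ with $e(B,A)=0$. Instead it assumes $bias(D)$ small and repeatedly invokes the consequence
\[
e(B,A)\ge \tfrac{1}{2}\,e(A,B)\quad\text{whenever}\quad e(A,B)\ge \ep\,e(D)^{2}/n^{2}\qquad(\star)
\]
as a tool to push arcs backwards. After a Cauchy--Schwarz count of $2$-paths (already using small bias to control $\{v:d^{-}(v)>2d^{+}(v)\}$), one finds many $x$ with $e(\Gamma^{+}(x),\Gamma^{++}(x))$ large; $(\star)$ then gives $e(\Gamma^{++}(x),\Gamma^{+}(x))$ large; a change of summation order produces a single $u$ with $e(\Gamma^{--}(u),\Gamma^{+}(u))$ large; and a second application of $(\star)$ forces $e(\Gamma^{+}(u),\Gamma^{--}(u))>0$, which \emph{is} a directed $4$-cycle. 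Crucially, neither intermediate pair is one-way in a $C_{4}$-free graph --- they only need to be large, and small bias then supplies the reverse arcs. In the triangle blow-up the first application of $(\star)$ already fails, and the biased pair it exposes is $(\Gamma^{+}(x),\Gamma^{++}(x))=(X_{1},X_{2})$, a pair your scheme never examines.
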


\begin{remark} \rm This result is best possible, up to the choice of $\ep$.  In Section \ref{bpsec} we construct a family of oriented graphs $D$ which have $bias(D)<Ke(D)^{2}/n^{2}$ but do not contain oriented four-cycles, where $K$ is a fixed constant.
\end{remark}

\begin{theorem}\label{sixcycle} There exists a constant $\ep>0$ such that every oriented graph $D$ with $bias(D)<\ep e(D)^{2}/n^{2}$ contains an oriented six-cycle. \end{theorem}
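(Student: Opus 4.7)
The plan is to mirror the strategy of Theorem~\ref{fourcycle}, encoding the $2$-step structure of $D$ into an auxiliary oriented graph and reducing to that theorem. Specifically, I would define an auxiliary graph $D'$ on $V(D)$ by putting $u\to_{D'}v$ exactly when the number of directed $2$-paths from $u$ to $v$ in $D$, denoted $p_2(u,v)$, satisfies $p_2(u,v)\ge 2\,p_2(v,u)$; the factor $2$ guarantees $D'$ has no $2$-cycles and is therefore itself an oriented graph.

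Next I would verify two properties of $D'$. A Cauchy--Schwarz estimate $\sum_{v}d^+(v)d^-(v)=\Omega(e(D)^2/n)$ for the total $2$-path count in $D$, combined with the small-bias hypothesis on $D$, forces most ordered pairs carrying $2$-paths to be strongly asymmetric, which yields $e(D')=\Omega(e(D)^2/n)$. Second, if some $A,B\ssq V$ were to exhibit a large bias in $D'$, then each $D'$-arc from $A$ to $B$ would be witnessed by many $2$-paths $A\to x\to B$ in $D$, and a double-counting argument (splitting on the intermediate vertex $x$ and using an averaging over the choice of $x$) would lift this into a pair of sets in $D$ whose bias contradicts the smallness of $\mathrm{bias}(D)$. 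Thus $\mathrm{bias}(D')$ is controlled in terms of $\mathrm{bias}(D)$.

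Applying Theorem~\ref{fourcycle} to $D'$ then produces a directed $4$-cycle $u_1\to u_2\to u_3\to u_4\to u_1$ in $D'$, and each arc is witnessed by many directed $2$-paths in $D$, giving a rich family of closed directed $8$-walks in $D$ passing through the stations $u_1,\dots,u_4$. The central remaining task is to extract a genuine directed $6$-cycle from this family. I would pursue this via pigeonholing the intermediate vertices: either many $8$-walks in the family collapse by collision of intermediates (and depending on the collision pattern, the collapse can be arranged to yield a $6$-cycle), or else one finds a chord of one such $8$-walk that skips exactly two vertices and so produces a $6$-cycle as a short-circuit, again by exploiting the small-bias hypothesis on $D$ to guarantee the existence of such chords in sufficient number.

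The main obstacle I expect is precisely this extraction: collisions of intermediates in an $8$-walk naturally produce cycles of lengths $C_3, C_4$, and $C_5$ as well as $C_6$, so some care is needed to rule out the unwanted collapses. A robust fix is to refine $D'$ by allowing its arcs to encode either a single arc of $D$ (with appropriate asymmetry count) or an asymmetric $2$-path of $D$; then a $4$-cycle in the refined $D'$ assembles in $D$ into a closed walk of length ranging over $\{4,5,6,7,8\}$, and a careful weighting ensures that the length-$6$ possibility is forced by the small-bias hypothesis. The constant $\ep$ in Theorem~\ref{sixcycle} will necessarily be smaller than that in Theorem~\ref{fourcycle}, absorbing the multiplicative losses from constructing the auxiliary graph, transferring the bias bound, and finally extracting the $6$-cycle.
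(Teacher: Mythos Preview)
Your reduction has a genuine gap at the very first step. You claim that the small-bias hypothesis on $D$ ``forces most ordered pairs carrying $2$-paths to be strongly asymmetric,'' and hence that $D'$ (arcs $u\to v$ when $p_2(u,v)\ge 2p_2(v,u)$) is dense. The implication goes the other way. Small bias is a pseudorandomness condition, and pseudorandom orientations make $2$-path counts \emph{symmetric}, not asymmetric: this is precisely the content of Lemma~\ref{balance}, which shows that when $bias(D)<\ep e(D)^2/n^2$ the number of $2$-paths $x\to\cdot\to u$ with $p_2(x,u)>16\,p_2(u,x)$ is at most $8\ep e(D)^2/n$, a vanishing fraction of all $2$-paths as $\ep\to 0$. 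Nothing prevents the remaining (balanced) pairs from satisfying $\tfrac12<p_2(u,v)/p_2(v,u)<2$, in which case your $D'$ has essentially no arcs and Theorem~\ref{fourcycle} gives nothing. (Separately, the assertion $e(D')=\Omega(e(D)^2/n)$ is dimensionally impossible in the dense regime, since $e(D')\le\binom{n}{2}$ while $e(D)^2/n$ can be of order $n^3$.) Your bias-transfer step has a related problem: a $D'$-arc $u\to v$ only certifies the \emph{ratio} $p_2(u,v)\ge 2p_2(v,u)$, not that $p_2(u,v)$ is large, so it is not clear how to ``lift'' a biased pair in $D'$ back to a biased pair in $D$.

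The paper does not reduce to Theorem~\ref{fourcycle}. It works directly with the weighted auxiliary graph $H$ whose arc $\vec{xu}$ carries weight $p_2(x,u)$ restricted to \emph{balanced} $2$-paths with endpoint of large out-degree (Lemma~\ref{goodtwopaths} plus Lemma~\ref{balance}), and then argues locally at each vertex $x$: the out-weights of $x$ induce a vertex weighting on $V$, and one shows (via a case split on heavy versus light arcs and a Cauchy--Schwarz argument on a bipartite auxiliary graph $F$) that the total weighted count of $2$-paths is large. Each such weighted $2$-path $u\to y\to v$ combines with the $p_2(x,u)$ paths $x\leadsto u$ and (by balance) the $\Omega(p_2(x,v))$ paths $v\leadsto x$ to produce six-cycles directly, with no extraction step needed. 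The key conceptual point you are missing is that balance of $2$-path counts is the \emph{resource} the small-bias hypothesis gives you, not an obstacle; the argument should exploit that $p_2(x,v)$ and $p_2(v,x)$ are comparable, rather than look for pairs where they are not.
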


We have no reason to believe that this result is best possible.  In fact we conjecture a stronger result.

\begin{conjecture}\label{sixcycleconj} There exists a constant $\ep>0$ such that every oriented graph $D$ with $bias(D)<\ep e(D)^{3/2}/n$ contains an oriented six-cycle. \end{conjecture}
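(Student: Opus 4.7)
The plan is to prove the contrapositive: starting from a $\vec{C_6}$-free oriented graph $D$ on $n$ vertices with $e=e(D)$ arcs, exhibit $A,B\subseteq V(D)$ with $e(B,A)\le e(A,B)/2$ and $e(A,B)\ge \varepsilon e^{3/2}/n$. The exponent $3/2$ matches the Bondy--Simonovits bound $\mathrm{ex}(n,C_6)=O(n^{4/3})$: for $e=O(n^{4/3})$ the statement is automatic, since every oriented graph on $n$ vertices has bias $\Omega(n)$ while $\varepsilon e^{3/2}/n=O(n)$ in this range. So the interesting regime is $e\gg n^{4/3}$, where the underlying graph already contains many undirected $6$-cycles and the problem reduces to one of orientation.

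I would begin with a vertex cleaning step. Let $X^+=\{v:d^+(v)>10\,d^-(v)\}$ and define $X^-$ symmetrically. The pair $(X^+,V)$ is biased and witnesses a subgraph of size $e(X^+,V)=\sum_{v\in X^+}d^+(v)$, and similarly $(V,X^-)$. If either exceeds $\varepsilon e^{3/2}/n$ we are done; otherwise, deleting $X^+\cup X^-$ costs only $o(e)$ arcs and leaves an oriented subgraph $D^*$ in which every vertex has $d^+(v)$ and $d^-(v)$ within a factor $10$. In $D^*$, iterating Cauchy--Schwarz gives $\Omega(e^k/n^{k-1})$ directed walks of length $k$ for each $k\ge 1$. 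Let $P_3(u,w)$ denote the number of directed length-$3$ paths from $u$ to $w$; then $\sum_{u,w}P_3(u,w)=\Omega(e^3/n^2)$. For each ordered pair $(u,w)$, any pair of paths $u\to\cdots\to w$ and $w\to\cdots\to u$ either meets at an internal vertex or closes up into a $\vec{C_6}$, so the $\vec{C_6}$-free hypothesis forces a heavy system of collisions between the forward and backward length-$3$ bundles $P_3(u,w)$ and $P_3(w,u)$. I would attempt to convert these forced collisions into a biased pair by dyadically partitioning the vertex set according to in-/out-codegree and locating the block where the orientation imbalance is concentrated.

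The hard part, and the reason this remains a conjecture rather than a theorem, is that local balance at each vertex is strictly weaker than small bias, so one cannot close the $\sqrt{e}/n$ gap between the $e^2/n^2$ threshold of Theorem~\ref{sixcycle} and the conjectured $e^{3/2}/n$ threshold using balance alone. A successful proof will very likely need to iterate the cleaning dyadically so that $e(A,B)\asymp e(B,A)$ for \emph{all} reasonably large pairs $(A,B)$, and then combine this global quasi-randomness with a ``bidirected Sidorenko-type'' lower bound of order $\Omega(e^6/n^5)$ for the number of copies of $\vec{C_6}$ in any sufficiently quasi-random oriented graph. Establishing such a Sidorenko-type inequality in the oriented setting --- ruling out the possibility that the orientation avoids $\vec{C_6}$ entirely despite the underlying graph being very dense and the orientation being locally balanced --- appears to be the central technical difficulty, and it may be easier to reach via a probabilistic embedding that orients the many undirected $6$-cycles supplied by Bondy--Simonovits and exploits small bias to argue that a positive fraction of them come out as directed cycles.
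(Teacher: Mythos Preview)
This statement is a \emph{conjecture} in the paper, not a theorem; the paper offers no proof, and you correctly acknowledge this in your final paragraph. So there is nothing to compare against, and your submission is not a proof but a programme sketch with an honest admission of the missing step. That is fine as far as it goes, but a few points deserve correction or sharpening.

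First, two technical slips in your preliminary reductions. You assert that every oriented graph on $n$ non-isolated vertices has $bias(D)=\Omega(n)$; the paper shows in Section~\ref{regran} that the correct lower bound is only $\Theta(n/\log n)$ (Theorems~\ref{lb} and~\ref{biasex}), so your ``automatic'' regime $e=O(n^{4/3})$ is not quite automatic --- there is a logarithmic gap to close. Second, your cleaning step claims that if $e(X^{+},V)\le \varepsilon e^{3/2}/n$ then deleting $X^{+}\cup X^{-}$ costs $o(e)$ arcs; but $e^{3/2}/n=o(e)$ requires $e=o(n^{2})$, so for dense $D$ this reduction removes a constant fraction of the arcs and you lose control of the density. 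You would need to treat the dense range separately (the paper's Theorem~\ref{dense} does exactly this).

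More substantively, the obstruction the paper identifies in Section~\ref{ConcRem} is sharper than the one you name. You say local balance is weaker than small bias; the paper points out concretely that there exist oriented graphs whose connected components each contain only $O(e(D)^{2}/n^{2})$ arcs, so that a global hypothesis of the form $bias(D)<\varepsilon e(D)^{3/2}/n$ imposes no constraint whatsoever on any single component when $e\gg n^{4/3}$. Your approach --- clean vertices, count directed $3$-walks, analyse collisions between $P_{3}(u,w)$ and $P_{3}(w,u)$ --- is entirely local, operating inside a single component, and therefore cannot by itself bridge the gap from the proven threshold $e^{2}/n^{2}$ to the conjectured $e^{3/2}/n$. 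The paper says explicitly that ``any successful approach \dots\ must go beyond the aggregation of certain locally observed inequalities''; your outline does not yet do so, and the Sidorenko-type inequality you propose at the end would, if provable in the oriented setting under a small-bias hypothesis, be exactly such a global ingredient --- but that is the whole conjecture restated, not a step toward it.
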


We also have a more general conjecture concerning oriented cycles of all even lengths.  

\begin{conjecture}\label{evencycleconj} For each $k\ge 2$, there exists a constant $\ep>0$ such that every oriented graph $D$ with $bias(D)<\ep e(D)^{k/(k-1)}/n^{2/(k-1)}$ contains an oriented cycle of length $2k$. \end{conjecture}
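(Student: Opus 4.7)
The plan is to attack this conjecture by a path-counting argument that extrapolates from the cases $k=2$ and $k=3$. The underlying philosophy is that a $2k$-cycle-free hypothesis imposes an \emph{antisymmetry} condition on the relation ``there exists a directed $k$-path from $a$ to $c$'', and this antisymmetry, combined with a lower bound on the total number of $k$-paths in $D$, should force a large biased subgraph.

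First, I would establish a lower bound on the number of directed $k$-walks in $D$. By H\"older's inequality (or iterated Cauchy--Schwarz) applied to the powers of the adjacency matrix, any oriented graph with $e$ arcs on $n$ vertices contains at least $c_{k}\cdot e^{k}/n^{k-1}$ directed walks of length $k$, and, provided $e$ is well above $n$, the vast majority of these walks are in fact simple paths. Let $P(a,c)$ denote the number of directed $k$-paths from $a$ to $c$, and let $T=\sum_{a,c}P(a,c)$. The crucial observation is that a directed $k$-path from $a$ to $c$ together with an internally disjoint directed $k$-path from $c$ to $a$ closes an oriented $2k$-cycle, so under the hypothesis of the conjecture, after discarding pairs of $k$-paths that share an internal vertex, one has $P(a,c)\cdot P(c,a)=0$ for essentially every pair of distinct vertices $a,c$.

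Second, I would exploit this antisymmetric structure via a second-moment argument. Let $M$ be the set of ordered pairs $(a,c)$ with $P(a,c)>0$. By near-antisymmetry $|M|\le \binom{n}{2}$, and Cauchy--Schwarz gives
\begin{equation*}
\sum_{(a,c)\in M}P(a,c)^{2}\,\ge\, \frac{T^{2}}{|M|}\,\ge\, \frac{2c_{k}^{2}\,e^{2k}}{n^{2k}}\,.
\end{equation*}
The next step is to convert this abundance of ``theta''-configurations (pairs of $k$-paths with common endpoints, all oriented the same way) into an explicit biased subgraph. One expects to identify a pair $A,B\ssq V(D)$ witnessing the required bias by, for example, selecting $A$ to be a suitably truncated $k$-step out-neighborhood of a well-chosen vertex (or vertex set) and $B$ to be the corresponding in-neighborhood: every arc that extends a heavy $k$-path from $A$ to $B$ contributes to $e(A,B)$, while the antisymmetric relation on pairs forbids the reverse arcs from building up.

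The main obstacle is precisely this conversion step: extracting an $e(A,B)$-versus-$e(B,A)$ inequality of the correct strength, i.e., with $e(A,B)\ge \ep\,e(D)^{k/(k-1)}/n^{2/(k-1)}$. The exponent $k/(k-1)$ is delicate --- a naive Cauchy--Schwarz only reproduces the exponent $2$ coming from the $k=2$ case, which is essentially the content of Theorems~\ref{fourcycle} and~\ref{sixcycle}. To obtain the stronger exponent one probably needs a weighted argument in which each arc is weighted by the number of directed $k$-paths it belongs to, or a dependent-random-choice scheme of the Alon--Krivelevich--Sudakov flavour that selects $A$ and $B$ by repeatedly sampling endpoints of heavy $k$-paths. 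Handling the vertex-disjointness requirement cleanly throughout --- ensuring that the $k$-paths we rely on truly close into oriented $2k$-cycles rather than collapsing into shorter closed structures --- is the principal technical hurdle, and almost certainly the reason the statement remains a conjecture.
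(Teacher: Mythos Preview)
This statement is presented in the paper as a conjecture, not a theorem; there is no proof to compare against. The authors say explicitly in Section~\ref{ConcRem} that they ``would very much like to see a proof'' and that ``it seems unlikely that a straightforward adaptation of our approach will give a proof''. Your write-up is accordingly a sketch of an attack on an open problem, and you yourself flag the conversion step as the principal obstacle. Two further points are worth making.

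First, your opening step is incorrect as written. It is not true that ``any oriented graph with $e$ arcs on $n$ vertices contains at least $c_{k}\,e^{k}/n^{k-1}$ directed walks of length $k$'': orient $K_{n/2,n/2}$ entirely from one side to the other and you have $\Theta(n^{2})$ arcs and no directed $2$-walk at all. The H\"older/Cauchy--Schwarz bound you invoke is an undirected fact. In the paper even the $k=2$ count (Lemma~\ref{twopaths}) already requires the bias hypothesis, and pushing from $2$-paths to structured longer paths under a bias hypothesis is itself nontrivial --- Section~\ref{sixcyclessec} spends real effort just to get usable $3$-path structure.

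Second, the paper points to an obstruction that your outline does not address. There are oriented graphs in which every connected component contains only $O(e(D)^{2}/n^{2})$ arcs. For $k\ge 3$ the conjectured threshold $\ep\,e(D)^{k/(k-1)}/n^{2/(k-1)}$ exceeds $e(D)^{2}/n^{2}$, so the $(\star)$-type inequality $e(B,A)\ge e(A,B)/2$ never activates for sets $A,B$ lying inside a single component: the global bias hypothesis gives \emph{no} information about the orientation within any one component, yet a $2k$-cycle must live in a single component. Your scheme --- take $A$ to be a truncated $k$-step out-neighbourhood of a well-chosen vertex and $B$ the corresponding in-neighbourhood --- is exactly such a within-component, locally aggregated argument, and is therefore blind in this regime. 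As the authors put it, any successful approach ``must go beyond the aggregation of certain locally observed inequalities''; your proposal does not yet indicate how it would do so.
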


\begin{remark}\rm The analogous question for oriented cycles of odd length is not so interesting.  
Indeed, a random orientation $D$ of the complete bipartite graph with parts of order 
$\lfloor n/2\rfloor$ and $\lceil n/2\rceil$ has $bias(D)$ very small (of order $n$) while $D$ 
does not contain any oriented cycle of odd length.  Looking for odd length cycles may become 
interesting in the case the underlying graph is not arbitrary, cf. Concluding Remarks (Section \ref{ConcRem}).\end{remark}

In addition to guaranteeing the presence of an oriented cycle of prescribed length, the above conditions can also be used to guarantee many such cycles.

\begin{theorem}\label{fourcycles}
There exist constants $c,\ep >0$ such that every oriented graph $D$ with $bias(D)< \ep e(D)^{2}/n^{2}$
contains at least $ce(D)^{4}/n^{4}$ oriented four-cycles.
\end{theorem}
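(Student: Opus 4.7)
My plan is to argue by contradiction via a supersaturation argument built on Theorem~\ref{fourcycle}. Assume $D$ satisfies $bias(D)<\ep\,e(D)^{2}/n^{2}$ but contains fewer than $c\,e(D)^{4}/n^{4}$ oriented four-cycles, for $\ep,c>0$ to be chosen. The first ingredient is a stability lemma for the bias under arc removal: writing $bias_{1/3}(D')=\max\{e(A,B):e(B,A)\le e(A,B)/3\}$, I claim that if $D'\ssq D$ is obtained by deleting $k$ arcs, then $bias_{1/3}(D')\le\max\{bias(D),6k\}$. The proof considers any pair $(A,B)$ witnessing $bias_{1/3}(D')$ and splits into two cases: either $(A,B)$ also satisfies $e_{D}(B,A)\le e_{D}(A,B)/2$ (so $(A,B)$ witnesses $bias(D)$, giving $e_{D'}(A,B)\le bias(D)$), or an elementary manipulation of $e_{D'}(B,A)\le e_{D'}(A,B)/3$ using the numbers of arcs removed in each direction forces $e_{D}(A,B)<6k$. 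The remark after the definition of $bias$ then ensures that Theorem~\ref{fourcycle} also holds (with some $\ep_{0}>0$) in a $bias_{1/3}$-version.

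Greedily remove one arc from each oriented four-cycle of $D$ to produce a four-cycle-free subgraph $D^{\star}$ with $e(D^{\star})\ge e(D)-N$, where $N$ is the number of oriented four-cycles in $D$. By the stability lemma and the contrapositive of the $bias_{1/3}$-version of Theorem~\ref{fourcycle},
\begin{equation*}
\max\{bias(D),\,6N\}\;\ge\;bias_{1/3}(D^{\star})\;\ge\;\ep_{0}(e(D)-N)^{2}/n^{2}.
\end{equation*}
Choosing $\ep<\ep_{0}/4$ excludes the first term from attaining the maximum once $N\le e(D)/2$ (which is automatic for $c$ small enough), leaving $6N\ge\ep_{0}(e(D)-N)^{2}/n^{2}$, which gives $N=\Omega(e(D)^{2}/n^{2})$.

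The main difficulty is that this bound matches the claimed $c\,e(D)^{4}/n^{4}$ only in the sparse regime $e(D)=O(n)$; for denser $D$ it is quadratically weaker, since the naive greedy deletion destroys only one four-cycle per arc removed. There are two natural ways around this. One option is to refine the greedy step to always delete an arc lying in the maximum number of four-cycles---at step $k$, this number is at least $4N_{k}/e(D_{k})$ by averaging---and to track the resulting geometric decay of the four-cycle count through the iteration. Alternatively, one may revisit the proof of Theorem~\ref{fourcycle} itself and extract the full quantitative bound directly from the identity $4N=\sum_{a\ne c}p(a,c)\,p(c,a)$, where $p(a,c)$ counts directed two-paths from $a$ to $c$: the bias condition furnishes both the lower bound $\sum_{v}d^{+}(v)d^{-}(v)\gtrsim e(D)^{2}/n$ (balanced in/out-degrees, seen by taking $A=\{v\}$, $B=V$) and the near-symmetry of the two-path matrix $P=(p(a,c))$ in Frobenius norm, after which Cauchy-Schwarz delivers $N\ge c\,e(D)^{4}/n^{4}$ uniformly in the density of $D$.
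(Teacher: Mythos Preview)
Your proposal is not a proof but a plan with an acknowledged failure and two unexecuted alternatives. The only argument you actually carry out is the naive supersaturation via Theorem~\ref{fourcycle}, and you correctly observe that it yields only $N=\Omega(e(D)^{2}/n^{2})$, which misses the target by a factor $e(D)^{2}/n^{2}$. Your first proposed fix, the refined greedy deletion, does not work as stated: the geometric decay $N_{k+1}\le N_{k}(1-4/e(D))$ gives $k^{\star}\le \tfrac{e(D)}{4}\ln N_{0}+1$, and combining this with $k^{\star}\ge c_{0}e(D)^{2}/n^{2}$ only yields $\ln N_{0}\gtrsim e(D)/n^{2}$, which is vacuous across the whole density range. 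So this route is a dead end, not merely unfinished.

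Your second alternative is the paper's argument, but you skip the step that carries all the weight. The identity $4N=\sum_{x,u}d^{+-}(x,u)d^{+-}(u,x)$ and the bound $\sum_{v}d^{+}(v)d^{-}(v)\gtrsim e(D)^{2}/n$ (the paper's Lemma~\ref{twopaths}) are fine, but the phrase ``near-symmetry of the two-path matrix in Frobenius norm'' hides the entire difficulty. What is actually needed---and what the paper proves as Lemma~\ref{balance}---is that all but $O(\ep)e(D)^{2}/n$ of the two-paths go from $x$ to $u$ with $d^{+-}(x,u)\le 16\,d^{+-}(u,x)$. This is not a one-line consequence of the bias hypothesis: it requires two separate applications of the property~$(\star)$, once to the pair $(\Gamma^{+}(x),U_{x})$ and once (after exchanging the order of summation) to the pair $(X_{u},\Gamma^{+}(u))$, together with an averaging step at each stage to isolate the vertices contributing enough mass. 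Only then does Cauchy--Schwarz on the balanced pairs give $\sum_{x}C_{x}\ge c\,e(D)^{4}/n^{4}$. Without proving this balance lemma, the final Cauchy--Schwarz step has no input to work with.
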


\begin{theorem}\label{sixcycles} There exist constants $c,\ep >0$ such that every oriented graph $D$ with $bias(D)<\ep e(D)^{2}/n^{2}$ contains at least $ce(D)^{6}/n^{6}$ oriented six-cycles.\end{theorem}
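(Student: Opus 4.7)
My plan is to deduce Theorem~\ref{sixcycles} from Theorem~\ref{sixcycle} by a standard supersaturation-style deletion argument. Assume for contradiction that $D$ satisfies $bias(D) < \ep\, e(D)^{2}/n^{2}$ but contains fewer than $N := c\, e(D)^{6}/n^{6}$ oriented six-cycles. Pick one arc from each oriented six-cycle and delete these arcs; the resulting graph $D'$ has no oriented six-cycle and satisfies $e(D') \ge e(D) - N$. The goal is then to apply Theorem~\ref{sixcycle} to $D'$ and transfer the resulting biased pair back to a biased pair in $D$, contradicting the bias hypothesis.

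The central technical step is this transfer. If $(A,B)$ is $\tfrac{1}{2}$-biased in $D'$, then $e_{D}(B,A) \le \tfrac{1}{2} e_{D}(A,B) + N$ in $D$, which is not quite a genuine $\tfrac{1}{2}$-bias in the sense of the paper's definition. To absorb the additive error $N$, I would exploit the paper's remark that the constant $\tfrac{1}{2}$ in the definition of $bias$ can be replaced by any $\eta \in (0,1)$, and apply Theorem~\ref{sixcycle} with, say, $\eta = \tfrac{1}{4}$ to $D'$. This gives a pair $(A,B)$ with $e_{D'}(B,A) \le \tfrac{1}{4} e_{D'}(A,B)$ and $e_{D'}(A,B)$ of order $e(D)^{2}/n^{2}$; since $N \le c\, e(D)^{2}/n^{2}$ (using $e(D) \le n^{2}$), choosing $c$ small enough relative to the Theorem~\ref{sixcycle} constant absorbs the slack and yields a genuinely $\tfrac{1}{2}$-biased pair in $D$ of size $\Omega(e(D)^{2}/n^{2})$, contradicting the hypothesis for $\ep$ sufficiently small.

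The deletion argument as stated is clean when $N \le e(D)/2$, i.e.\ in the sparse regime $e(D) = O(n^{6/5})$. In the dense regime $e(D) \gg n^{6/5}$ the upper bound $c\, e(D)^{6}/n^{6}$ may exceed $e(D)$ itself, so destroying all six-cycles by arc deletion is infeasible; this is the main obstacle. For dense $D$, I would instead adapt the proof of Theorem~\ref{sixcycle} directly: that proof presumably extracts from the bias hypothesis a balanced bipartite substructure of $D$ (vertex sets $A,B$ with $e(A,B)$ and $e(B,A)$ both of order $\tfrac{e(D)}{n^{2}}|A||B|$) and produces a single six-cycle by pigeonhole on oriented length-three paths inside it. Replacing the pigeonhole step with a Cauchy--Schwarz / convexity estimate on the oriented length-three path counts between pairs of vertices of the substructure should yield the desired lower bound $\Omega(e(D)^{6}/n^{6})$. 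A secondary application of the bias hypothesis is then needed to control the contribution of degenerate closed walks of length six --- those with repeated vertices, or those tracing out shorter oriented cycles such as triangles --- which are counted by the Cauchy--Schwarz sum but do not correspond to genuine oriented six-cycles.
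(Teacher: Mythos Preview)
The deletion argument has an arithmetic error that breaks it almost everywhere. You claim that $N = c\,e(D)^{6}/n^{6} \le c\,e(D)^{2}/n^{2}$ follows from $e(D)\le n^{2}$; in fact $e(D)^{6}/n^{6}\le e(D)^{2}/n^{2}$ is equivalent to $e(D)\le n$. So the bias-transfer step (absorbing the additive $N$ into the gap between $\tfrac14$-bias and $\tfrac12$-bias) only goes through when $e(D)=O(n)$. Even inside your ``sparse regime'' $e(D)=O(n^{6/5})$, the quantity $c\,e(D)^{6}/n^{6}$ can be far larger than $e(D)^{2}/n^{2}$, so deleting up to $N$ arcs may wipe out any biased pair you try to transfer from $D'$ to $D$. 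Supersaturation-by-deletion is intrinsically too weak here: the target count is $(e(D)^{2}/n^{2})^{3}$, three powers above the size of a biased pair, so one cannot hope to absorb the deleted arcs into the slack of a single biased pair.

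The second half of your proposal is not an argument but a guess about the paper's proof of Theorem~\ref{sixcycle}, and the guess is off: in the paper Theorem~\ref{sixcycle} is deduced \emph{from} Theorem~\ref{sixcycles}, not the other way around, so there is no separate ``bipartite substructure plus pigeonhole on three-paths'' argument to upgrade. The paper's proof of Theorem~\ref{sixcycles} is direct and works uniformly across all densities. It uses Lemma~\ref{goodtwopaths} and Lemma~\ref{balance} to find $\gamma(\ep)e(D)^{2}/n$ \emph{great} two-paths (balanced, with endpoint of out-degree $\ge \gamma(\ep)e(D)/n$), encodes them as weights on an auxiliary digraph $H$ on $V$, and then splits into two cases according to whether most of the weight sits on heavy arcs ($\ge e(D)/n$) or light arcs. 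For each vertex $x$ of large out-weight it proves a Claim that, under the vertex weighting $\omega^{x}_{u}=w_{\vec{xu}}$, the $\omega^{x}$-weighted count of two-paths in $D$ is at least $\gamma(\ep)e_{x}^{2}e(D)^{2}/n^{3}$; the light case requires a dyadic decomposition of the weights. Combining the Claim with the balancedness of great paths and a Cauchy--Schwarz sum over $x$ yields $\gamma(\ep)e(D)^{6}/n^{6}$ six-cycles. None of this resembles a deletion argument.
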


\begin{remark} \rm The number of cycles obtained is (up to the
constant) best possible.  A random orientation of the
random graph $G(n,e)$ (recall that $G(n,e)$ is a graph selected uniformly at
random from the class of all graphs on $n$ vertices with $e$ edges) will
generally obey the $bias$ condition (see Lemma \ref{random}), and have $\Theta(e^{4}/n^{4})$ oriented four-cycles and $\Theta(e^{6}/n^{6})$ oriented six-cycles.\end{remark}

The above results are relatively general in the sense that they apply 
in the case $D$ is sparse as well as the case $D$ is dense, 
while they are restrictive in the sense that they only look for oriented four-cycles 
and oriented six-cycles.  The next theorem does the opposite -- restricting to the case $D$ is dense, 
it counts copies of any oriented graph $H$.  

In what follows we count homomorphic copies of subgraphs.  
Let $H$ be an oriented graph on $k$ vertices.  
We write $hom(H,D)$ for the number of homomorphic copies of $H$ in $D$, i.e., 
the number of functions $\phi:V(H)\rightarrow V(D)$ such that $\vec{\phi(x)\phi(y)}\in E(D)$ for every arc $\vec{xy}\in E(H)$.  Likewise, we write $hom(\bar{H},D)$ for the number of homomorphic copies of the unoriented graph $\bar{H}$ in $D$, 
i.e., the number of functions $\phi:V(H)\rightarrow V(D)$ such that for every edge $\{x,y\}\in E(\bar{H})$ either $\vec{\phi(x)\phi(y)}$ or $\vec{\phi(y)\phi(x)}$ is an arc of $D$.

\begin{theorem}\label{dense} Let $H$ be an oriented graph on $k$ vertices and let $D$ be an oriented graph with $bias(D)<\ep n^{2}$.  Then \begin{equation*} hom(H,D)\ge \frac{hom(\bar{H},D)}{3^{e(H)}}-\frac{\ep}{2}n^{k}\, .\end{equation*}\end{theorem}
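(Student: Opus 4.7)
The plan is to interpolate between $\bar H$ and $H$ by orienting one edge at a time, losing at most a factor of $3$ and a small additive error at each step. I will work with \emph{partially oriented} graphs: given $H^{*}$ on $V(H)$ whose edge set is $E(\bar H)$ and in which each edge is either oriented (in the same direction as in $H$) or left undirected, let $hom(H^{*},D)$ count maps $\phi:V(H)\to V(D)$ sending each oriented edge of $H^{*}$ to an arc of $D$ in the prescribed direction, and each undirected edge to a pair of vertices that are adjacent in $D$ (in either direction). Then $hom(H^{*},D)$ interpolates between $hom(\bar H,D)$ (no edge oriented) and $hom(H,D)$ (all edges oriented).

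The heart of the argument is the following claim: if $\{x,y\}$ is an undirected edge of $H^{*}$, and $H^{*}_{+}$ (resp.\ $H^{*}_{-}$) denotes $H^{*}$ with this edge oriented as in $H$ (resp.\ in the opposite direction), then
\[ hom(H^{*}_{-},D) \,\le\, 2\,hom(H^{*}_{+},D) + \ep\,n^{k}. \]
To prove this, parametrize $\phi$ by its restriction $\phi'$ to $V(H)\setminus\{x,y\}$ and let $A_{\phi'},B_{\phi'}\ssq V(D)$ be the sets of admissible images for $x$ and $y$ respectively, under the constraints imposed by the edges of $H^{*}$ other than $\{x,y\}$. The contribution of $\phi'$ to $hom(H^{*}_{+},D)$ is exactly $e(A_{\phi'},B_{\phi'})$, and its contribution to $hom(H^{*}_{-},D)$ is $e(B_{\phi'},A_{\phi'})$. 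Directly from the definition of $bias$, for every such pair one has $e(B_{\phi'},A_{\phi'})\le 2\,e(A_{\phi'},B_{\phi'})+\ep n^{2}$: indeed, either $e(A_{\phi'},B_{\phi'})>e(B_{\phi'},A_{\phi'})/2$, or else $e(B_{\phi'},A_{\phi'})\le bias(D)<\ep n^{2}$. Summing over the $n^{k-2}$ restrictions $\phi'$ yields the displayed bound.

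Combining with the identity $hom(H^{*},D)=hom(H^{*}_{+},D)+hom(H^{*}_{-},D)$ gives $hom(H^{*}_{+},D)\ge hom(H^{*},D)/3-\ep n^{k}/3$. Iterating this inequality from $\bar H$ to $H$ one edge at a time produces
\[ hom(H,D)\,\ge\,\frac{hom(\bar H,D)}{3^{e(H)}}-\ep n^{k}\sum_{j=1}^{e(H)}3^{-j}, \]
and the geometric sum is strictly less than $1/2$, which yields the claimed inequality.

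The only substantive step is the per-restriction inequality $e(B_{\phi'},A_{\phi'})\le 2\,e(A_{\phi'},B_{\phi'})+\ep n^{2}$; once this is in hand, the rest is a clean induction on $e(H)$. The one subtlety worth flagging is the choice of interpolation direction: interpolating from $\bar H$ toward $H$, so that each step applies the bias bound to one pair $(A_{\phi'},B_{\phi'})$ at a time and combines $H^{*}_{+}$ with $H^{*}_{-}$, produces the factor $1/3^{e(H)}$ with a geometrically summable error; a naive edge-by-edge flip between orientations of $\bar H$ would instead produce a factor $1/2^{e(H)}$ but an error that compounds to something like $\ep n^{k}\cdot (3/2)^{e(H)}$, which is useless.
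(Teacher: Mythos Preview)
Your proof is correct and follows essentially the same approach as the paper: interpolate from $\bar H$ to $H$ one edge at a time via partially oriented graphs, and at each step use the bias condition on the pair $(A_{\phi'},B_{\phi'})$ determined by the restriction $\phi'$ to the other $k-2$ vertices to obtain $hom(H^{*}_{+},D)\ge hom(H^{*},D)/3-\ep n^{k}/3$, then sum the resulting geometric series. The only cosmetic difference is that the paper first records the consequence $e(A,B)\ge \bar e(A,B)/3-\ep n^{2}/3$ of the bias bound and applies it directly, whereas you derive the equivalent inequality $e(B,A)\le 2e(A,B)+\ep n^{2}$ via the dichotomy; the resulting per-step loss and the final bound are identical.
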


Since one may easily bound the number of degenerate homomorphic copies of $H$, one easily deduces the following corollary.

\begin{corollary}\label{densecor} Let $H$ be an oriented graph on $k$ vertices and let $D$ be an oriented graph with $bias(D)<\ep n^{2}$.  If $hom(\bar{H},D)\ge 3^{e(H)}\ep n^{k}$ and $n\ge 4\ep^{-1}$, then $D$ contains $H$ as a subgraph.\end{corollary}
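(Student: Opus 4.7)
The plan is to obtain the corollary as an almost immediate consequence of Theorem \ref{dense}, followed by the standard step of passing from a lower bound on the number of homomorphic copies to the existence of an embedded copy. Substituting the hypothesis $hom(\bar{H},D)\ge 3^{e(H)}\ep n^{k}$ into the inequality provided by Theorem \ref{dense} gives
\begin{equation*}
hom(H,D)\;\ge\;\frac{hom(\bar{H},D)}{3^{e(H)}}-\frac{\ep}{2}n^{k}\;\ge\;\ep n^{k}-\frac{\ep}{2}n^{k}\;=\;\frac{\ep n^{k}}{2}.
\end{equation*}
Thus $D$ already contains many homomorphic copies of $H$; what remains is to show that at least one of them is injective.

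To this end I would upper bound the number of degenerate (non-injective) homomorphisms $\phi:V(H)\to V(D)$ by the number of non-injective functions $V(H)\to V(D)$. Any non-injective $\phi$ identifies some pair $i\neq j\in V(H)$, so first choose such a pair (at most $\binom{k}{2}$ ways) and then assign images freely to the $k-1$ resulting equivalence classes (at most $n^{k-1}$ ways), giving a crude upper bound of $\binom{k}{2}n^{k-1}$ degenerate maps in total. Treating $k$ as the fixed parameter of the statement and using $n\ge 4\ep^{-1}$, this quantity is strictly smaller than $\ep n^{k}/2$, so the lower bound on $hom(H,D)$ forces at least one homomorphism to be injective. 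Since an injective homomorphism from $H$ to $D$ is precisely an embedding of $H$ as a subgraph, the corollary follows.

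The whole argument is quite short and presents no real obstacle once Theorem \ref{dense} is in hand: the substantive content is the theorem itself, and the corollary is obtained by a routine degenerate-map count. The only point of care is arithmetic: one must verify that the hypothesis $n\ge 4\ep^{-1}$ is sharp enough for the $\binom{k}{2}n^{k-1}$ term to be absorbed by $\frac{\ep}{2}n^{k}$, with $k$ playing the role of a fixed constant depending on $H$.
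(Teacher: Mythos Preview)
Your approach is precisely the one the paper sketches: apply Theorem~\ref{dense} to obtain $hom(H,D)\ge \ep n^{k}/2$, then subtract off the non-injective homomorphisms. The paper gives no further detail beyond the sentence ``one may easily bound the number of degenerate homomorphic copies of $H$'', so strategically you are doing exactly what is intended.

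There is, however, a genuine arithmetic gap in the step you yourself flag. The inequality you need, $\binom{k}{2}n^{k-1}<\tfrac{\ep}{2}n^{k}$, is equivalent to $\ep n>k(k-1)$, whereas the hypothesis $n\ge 4\ep^{-1}$ only gives $\ep n\ge 4$. Thus your crude count closes only for $k\le 2$; for $k\ge 3$ the assertion ``this quantity is strictly smaller than $\ep n^{k}/2$'' is simply false as written. Calling $k$ a ``fixed parameter'' does not help, since the constant $4$ in the hypothesis is absolute and does not depend on $H$. This appears to be a minor imprecision in the corollary as stated in the paper (a hypothesis of the form $n\ge k^{2}\ep^{-1}$, or equivalently $\ep\le c/k^{2}$, would make everything go through), but since you explicitly write ``one must verify'' this step, you should either carry out the verification honestly and record that the stated constant is insufficient, or note that the argument requires the mild strengthening $\ep n\ge k(k-1)$.
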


Since the main results of the article may be viewed as 
lower bounds on $bias(D)$ for $H$-free oriented graphs $D$ (for given choices of $H$), 
it is natural to also consider the related extremal question: 
\begin{quote}What is the minimum value of $bias(D)$ over oriented graphs on $n$ (non-isolated) vertices?\end{quote}  
It will be seen from the results we obtain that this is indeed the most natural form of 
the extremal question.  
This question will be considered in detail in Section \ref{regran}.  
The main results establish that the minimum value of $bias(D)$ over oriented graphs on $n$ 
non-isolated vertices is $\Theta(n/\log{n})$.  We also prove that the answer is larger, $\Theta(n)$, 
in the case that the oriented graph is regular, or close to regular.  
In addition, we provide an algorithmic proof of the lower bound in the case $D$ is regular.

\medskip
The layout of the article is as follows.  
In Section \ref{fourcyclessec} we prove Theorem \ref{fourcycle} and Theorem \ref{fourcycles}.  
In Section \ref{sixcyclessec} we prove Theorem \ref{sixcycle} and Theorem \ref{sixcycles}.  
In Section \ref{bpsec} we describe a family of oriented graphs which show that Theorem \ref{fourcycle} 
is best possible (up to the choice of $\ep$).  
In Section \ref{densesec} we consider the case of dense oriented graphs, and provide a proof of
 Theorem \ref{dense}.  In Section \ref{regran} we discuss extremal questions concerning 
the parameter $bias(D)$.  Finally, we give in Section \ref{ConcRem} a number of concluding remarks.  
These remarks include new questions and conjectures, together with further discussion of the 
conjectures stated above. We should note here that it is possible to obtain another proof of 
Corollary~\ref{densecor} (with some different (worse) constants) using 
Szemer\'edi's regularity lemma. The details of this proof are included in the Appendix of the arxiv version of this article (arXiv:0911.3969).

\section{Oriented Four-Cycles}\label{fourcyclessec}

We begin by defining some notation.  Let $x$ be a vertex of an oriented graph $D$.  We shall use the following notation: \begin{align*} 
\Gamma^{+}(x) & =  \{y\in V:\vec{xy}\in E(D)\}, 
\\ \Gamma^{-}(x) & = \{y\in V:\vec{yx}\in E(D)\},
\\ \Gamma^{++}(x) & = \{y\in V: \exists z\in V\, \vec{xz},\vec{zy}\in E(D)\},
\\ \Gamma^{--}(x) & =  \{y\in V: \exists z\in V\, \vec{yz},\vec{zx}\in E(D)\}.
\end{align*}
We let $d^{+}(x)=|\Gamma^{+}(x)|$ and $d^{-}(x)=|\Gamma^{-}(x)|$.  We also define the following notation for joint-degrees, we let $d^{++}(x,u)=|\Gamma^{+}(x)\cap \Gamma^{+}(u)|$ and $d^{+-}(x,u)=|\Gamma^{+}(x)\cap \Gamma^{-}(u)|$.

We now prove a useful lemma.  Recall that throughout $D$ denotes an oriented graph on $n$ vertices with $e$ arcs.

\begin{lemma}\label{twopaths} Let $D$ be an oriented graph with $bias(D)\le e(D)/2$. Then $D$ contains at least $e(D)^{2}/8n$ paths of length two.\end{lemma}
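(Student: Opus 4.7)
The plan is to count paths of length two by their midpoints, so that $P_{2}(D) = \sum_{y \in V}d^{+}(y)d^{-}(y)$ (a directed path $\vec{xy},\vec{yz}$ automatically has $x\ne z$ in an oriented graph), and argue by contradiction: assuming $P_{2}(D)<e(D)^{2}/(8n)$, I will extract a biased pair $(A,V)$ of size strictly greater than $e(D)/2$, contradicting $bias(D)\le e(D)/2$.

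The key step is to threshold the vertices by in-degree. I set $\alpha:=e(D)/(4n)$ (the value dictated by the final arithmetic) and take $B:=\{y\in V:d^{-}(y)\ge\alpha\}$ together with $A:=V\setminus B$. Since $d^{-}(y)\ge\alpha$ for every $y\in B$, the contradiction hypothesis forces
\[
\alpha\sum_{y\in B}d^{+}(y)\,\le\,\sum_{y\in B}d^{+}(y)d^{-}(y)\,\le\,P_{2}(D)\,<\,\frac{e(D)^{2}}{8n},
\]
whence $\sum_{y\in B}d^{+}(y)<e(D)/2$. Equivalently, $e(A,V)=e(D)-\sum_{y\in B}d^{+}(y)>e(D)/2$.

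Next, by the definition of $A$ every $y\in A$ satisfies $d^{-}(y)<\alpha$, so the trivial bound $e(V,A)=\sum_{y\in A}d^{-}(y)<|A|\alpha\le n\alpha=e(D)/4$ holds. Combining the two estimates, $e(V,A)<e(D)/4<e(A,V)/2$, and the pair $(A,V)$ realises a biased subgraph of size $e(A,V)>e(D)/2\ge bias(D)$, the desired contradiction.

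The main (and essentially only) obstacle is getting the constants to line up: the choice $\alpha=e(D)/(4n)$ is the one that balances the out-degree budget on $B$ inherited from $P_{2}(D)$ against the trivial in-degree budget on $A$, so that exactly the factor $\tfrac{1}{8}$ claimed in the lemma emerges. No further combinatorial input is needed beyond these two elementary inequalities and the hypothesis on $bias(D)$.
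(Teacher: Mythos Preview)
Your proof is correct. It differs from the paper's argument in a meaningful way, so a brief comparison is worthwhile.

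The paper thresholds vertices by the \emph{ratio} of in-degree to out-degree: setting $Z=\{y:d^{-}(y)>2d^{+}(y)\}$, one gets $e(V,Z)>2e(Z,V)$, hence $e(V,Z)\le bias(D)\le e(D)/2$, so $Y=V\setminus Z$ receives at least $e(D)/2$ incoming arcs. On $Y$ one has $d^{+}(y)\ge d^{-}(y)/2$, and the bound follows from Cauchy--Schwarz applied to $\sum_{y\in Y}d^{-}(y)^{2}$.

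You instead threshold by the \emph{absolute} in-degree $\alpha=e(D)/(4n)$ and argue by contradiction. The hypothesis $P_2(D)<e(D)^2/(8n)$ bounds the total out-degree on the high-in-degree set $B$, forcing large $e(A,V)$, while the definition of $A$ trivially caps $e(V,A)$; the two together produce a biased pair violating $bias(D)\le e(D)/2$. Your route avoids Cauchy--Schwarz entirely and is in that sense more elementary; the paper's route is direct rather than by contradiction and ties more transparently to the later uses of the $bias$ hypothesis (where the ratio threshold recurs). Both are equally short, and the constants match.
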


\begin{proof} The number of paths of length two in $D$ is \begin{equation*} \sum_{y\in V} d^{+}(y)d^{-}(y)\, .\end{equation*} Denote by $Z$ the set of vertices $y$ for which $d^{-}(y)>2d^{+}(y)$.  By summing over vertices $y\in Z$ one finds that $e(V,Z)> 2e(Z,V)$, and so, by the definition of $bias(D)$, we have that $e(V,Z)\le bias(D)\le e/2$.  Therefore $e(V,Y)\ge e(D)/2$, where $Y=V\setminus Z$.  We also have that $d^{+}(y)\ge d^{-}(y)/2$ for all $y\in Y$, so that \begin{equation*} \sum_{y\in V}d^{+}(y)d^{-}(y)\ge \frac{1}{2}\sum_{y\in Y}d^{-}(y)^{2}\ge \frac{1}{2n}\Big (\sum_{y\in Y}d^{-}(y)\Big)^{2}\, ,\end{equation*} where the final inequality follows from an application of the Cauchy-Schwarz inequality.  The proof is now complete, as $\sum_{y\in Y}d^{-}(y)=e(V,Y)\ge e(D)/2$.\end{proof}

We now turn to the proof of Theorem \ref{fourcycle}.  We shall prove this theorem with $\ep=1/32$.  Thus, throughout the proof we may assume that $D$ has $bias(D)< e(D)^{2}/32n^{2}$.  Note also the following useful formulation of the $bias(D)<\ep e(D)^{2}/n^{2}$ property:
\begin{equation*} \begin{array}{lr} e(B,A)\ge e(A,B)/2 \quad \text{whenever}\quad e(A,B)\ge \ep e(D)^{2}/ n^{2}\, .& \qquad (\star)\end{array}\end{equation*}
Also we introduce a final piece of notation.  For each vertex $x$, we write $e_{x}$ for the number of paths of length two in $D$ which start at $x$.  Equivalently, $e_{x}=e(\Gamma^{+}(x),\Gamma^{++}(x))$.

\begin{proof}[Proof of Theorem \ref{fourcycle}] Let $D$ be an oriented graph with $bias(D)< e(D)^{2}/32n^{2}$.  By Lemma \ref{twopaths}, we know that there are at least $e(D)^{2}/8n$ paths of length two in $D$.  Denote by $W$ the set of vertices $x$ with $e_{x}\ge e(D)^{2}/16n^{2}$.  Since at most $e(D)^{2}/16n$ paths of length two start at vertices outside of $W$, we have that $\sum_{x\in W}e_{x}\ge e(D)^{2}/16n$.

For each vertex $x\in W$, we have $e(\Gamma^{+}(x),\Gamma^{++}(x))=e_{x}\ge e(D)^{2}/16n^{2}$ and so, by $(\star)$, we have that $e(\Gamma^{++}(x),\Gamma^{+}(x))\ge e_{x}/2$.  Equivalently \begin{equation*}\sum_{u\in \Gamma^{++}(x)}d^{++}(x,u)\ge \frac{e_{x}}{2}\, .\end{equation*}
Summing over $x\in W$, we obtain \begin{equation*}\sum_{x\in W}\sum_{u\in \Gamma^{++}(x)}d^{++}(x,u)\ge \sum_{x\in W}\frac{e_{x}}{2}\ge \frac{e(D)^{2}}{32n}\, .\end{equation*}  We now consider a change in the order of summation.
\begin{equation*}\sum_{u\in V}\sum_{x\in \Gamma^{--}(u)}d^{++}(x,u)\ge \sum_{x\in W}\sum_{u\in \Gamma^{++}(x)}d^{++}(x,u)\ge \frac{e(D)^{2}}{32n}\, .\end{equation*} 
In particular this implies that for some $u\in V$ one has $\sum_{x\in \Gamma^{--}(u)}d^{++}(x,u)\ge e(D)^{2}/32n^{2}$.  Equivalently, $e(\Gamma^{--}(u),\Gamma^{+}(u))\ge e(D)^{2}/32n^{2}$.  A final application of $(\star)$ gives that $$e(\Gamma^{+}(u),\Gamma^{--}(u))\ge e(D)^{2}/64n^{2}>0.$$  This precisely gives us an oriented four-cycle.\end{proof}

In the above proof we use the property $(\star)$, together with the trick of changing the order of summation, to deduce the existence of a pair of vertices $x,u$ between which there is a path of length two in each direction.  This is not a rare occurrence, rather it is typical.  Further, it is typical that the number of paths of length two in the two directions is of the same order.  The following lemma proves this fact and allows us to deduce Theorem \ref{fourcycles}.  Recall that $d^{+-}(x,u)$ denotes the number of paths of length two from $x$ to $u$.  Say that $(x,u)$ is \emph{unbalanced} if $d^{+-}(x,u)> 16 d^{+-}(u,x)$, otherwise it is \emph{balanced}.  A path of length two $\vec{xy},\vec{yu}$ is called \emph{unbalanced} if $(x,u)$ is unbalanced, otherwise it is \emph{balanced}.

\begin{lemma}\label{balance} Let $D$ be an oriented graph with $bias(D)<\ep e(D)^{2}/n^{2}$. Then the number of unbalanced paths of length two in $D$ is at most $8 \ep e(D)^{2}/n$.\end{lemma}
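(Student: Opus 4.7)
The plan is to apply the bias property $(\star)$ twice---once anchoring on the start vertex of an unbalanced path and once on the end vertex---and then to close the argument by pitting the resulting upper bound on the total number $F$ of unbalanced paths against the trivial lower bound $F>16R$ that comes directly from the definition of ``unbalanced.'' The structure mirrors the double use of $(\star)$ in the proof of Theorem~\ref{fourcycle}.

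First I set up symmetric notation: for each vertex $x$ let $U_x=\{u \in V: (x,u) \text{ unbalanced}\}$, and for each vertex $u$ let $W_u=\{x \in V: (x,u) \text{ unbalanced}\}$. The number of unbalanced paths starting at $x$ equals $F_x:=e(\gp(x),U_x)$, so the total number of unbalanced paths is $F:=\sum_x F_x$. Correspondingly, $R:=\sum_u e(\gp(u),W_u)$ is the total number of ``reverse'' paths $u\to\cdot\to x$ taken over unbalanced pairs $(x,u)$, and summing the defining inequality $d^{+-}(x,u)>16\,d^{+-}(u,x)$ over such pairs immediately gives $F>16R$.

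For the first application of $(\star)$ I take $A=\gp(x)$ and $B=U_x$; whichever of the two cases holds in $(\star)$, one obtains $F_x\le \ep e(D)^2/n^2 + 2e(U_x,\gp(x))$. Summing over $x$,
\[ F \le \ep e(D)^2/n + 2S, \qquad S:=\sum_x e(U_x,\gp(x)). \]
A routine change in the order of summation (using $e(U_x,\gp(x))=\sum_{u\in U_x}d^{++}(x,u)$ and $u\in U_x \Leftrightarrow x\in W_u$) rewrites $S$ symmetrically as $S=\sum_u e(W_u,\gp(u))$. Applying $(\star)$ a second time with $A=W_u$ and $B=\gp(u)$ yields, analogously,
\[ S \le \ep e(D)^2/n + 2R. \]

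Chaining these bounds, $F\le 3\ep e(D)^2/n + 4R$; substituting $R<F/16$ gives $F<3\ep e(D)^2/n + F/4$, which rearranges to $F<4\ep e(D)^2/n\le 8\ep e(D)^2/n$, as desired. I do not expect a serious obstacle: the only subtle point is the symmetric identity $\sum_x e(U_x,\gp(x))=\sum_u e(W_u,\gp(u))$ (both equal $\sum d^{++}(x,u)$ over unbalanced pairs), which allows the two applications of $(\star)$ to be chained. Each application loses a factor of $2$, and the argument closes only because the defining ratio $16$ comfortably dominates the combined attenuation $4=2\cdot 2$.
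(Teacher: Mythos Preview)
Your proof is correct and follows essentially the same approach as the paper: two applications of the bias property $(\star)$---first to $(\Gamma^{+}(x),U_x)$, then after the change of summation to $(W_u,\Gamma^{+}(u))$---closed off against the defining inequality $F>16R$. The one notable difference is that you replace the paper's thresholding steps (restricting to vertices $x$ with $f_x\ge f/2n$, etc., so that $(\star)$ applies directly) by the unconditional additive form $e(A,B)\le \ep e(D)^2/n^2 + 2e(B,A)$; this streamlines the argument, avoids the contradiction setup, and in fact yields the sharper bound $F<4\ep e(D)^2/n$.
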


\begin{proof}[Proof of Theorem \ref{fourcycles}] Let $c=1/4.16^3$ and $\ep=1/128$.  By Lemma \ref{twopaths}, there are at least $e(D)^{2}/8n$ paths of length two in $D$.  By Lemma \ref{balance}, at most $8 \ep e(D)^{2}/n \le e(D)^{2}/16 n$ of these paths are unbalanced.  Therefore there are at least $e(D)^{2}/16n$ balanced paths in $D$.  Let $C_{x}$ denote the number of oriented four-cycles containing the vertex $x$. 
\begin{equation*} C_{x}=\sum_{u\in V} d^{+-}(x,u)d^{-+}(x,u)\ge \frac{1}{16}\sum_{u: (x,u)\, \text{balanced}}d^{+-}(x,u)^{2}\, .
\end{equation*}
Summing this quantity over $x$, and applying the Cauchy-Schwarz inequality, one obtains 
\begin{equation*}\sum_{x}C_{x}\ge \frac{1}{16}\sum_{x,u: (x,u)\, \text{balanced}}d^{+-}(x,u)^{2}\ge \frac{1}{16n^{2}}\Big(\sum_{(x,u) \,\text{balanced}}d^{+-}(x,u)\Big)^{2} .\end{equation*}
This sum counts exactly the number of balanced paths of length two and so is at least $e(D)^{2}/16n$.  Thus, $\sum_{x}C_{x}\ge e(D)^{4}/16^3n^{4}$.  The proof is now complete as this sum counts each oriented four-cycle four times.  \end{proof}

\begin{proof}[Proof of Lemma \ref{balance}] Denote by $f$ the number of unbalanced paths of length two, and suppose that $f\ge 8 \ep e(D)^{2}/n$.  Let $f_{x}$ denote the number of unbalanced paths of length two starting at $x$, and let $W$ denote the set of vertices $x$ for which $f_{x}\ge f/2n$.  Since at most $f/2$ unbalanced paths of length two start at vertices outside of $W$, we have that at least $f/2$ unbalanced paths start inside $W$,
 i.e., $\sum_{x\in W}f_{x}\ge f/2$.  For each vertex $x\in W$, we denote by $U_{x}$ the set of vertices $u$ for which $(x,u)$ is unbalanced, we have that $e(\Gamma^{+}(x),U_{x})=f_{x}\ge \ep e(D)^{2}/n^{2}$.  By $(\star)$ we have that $e(U_{x},\Gamma^{+}(x))\ge f_{x}/2$, 
i.e., $\sum_{u :\, (x,u)\, \text{unbalanced}}d^{++}(x,u)\ge f_{x}/2$.  Thus,
\begin{equation*} \sum_{(x,u) \text{unbalanced}}d^{++}(x,u)\ge \frac{1}{2}\sum_{x\in W}f_{x}\ge f/4\, .\end{equation*}  
Alternatively, denoting by $X_{u}$ the set of vertices $x$ for which $(x,u)$ is unbalanced, 
\begin{equation*} \sum_{u}e(X_{u},\Gamma^{+}(u)) \ge f/4\, .\end{equation*} 
Let $U$ denote those vertices $u$ for which $e(X_{u},\Gamma^{+}(u)) \ge f/8n$, and note that 
\begin{equation*} \sum_{u\in U}e(X_{u},\Gamma^{+}(u)) \ge f/8\, .\end{equation*} 
However, for each $u\in U$, we have by $(\star)$ that $e(\Gamma^{+}(u),X_{u})\ge e(X_{u},\Gamma^{+}(u))/2$.  And so 
\begin{equation*} \sum_{u\in U}e(\Gamma^{+}(u),X_{u}) \ge f/16 \, .\end{equation*} 
Reinterpreting this sum in terms of $d^{+-}(x,u)$ and recalling that 
$$f=\sum_{(x,u)\, \text{unbalanced}} d^{+-}(x,u),$$ we have that
\begin{equation*} \sum_{(x,u)\, \text{unbalanced}}d^{+-}(u,x) \ge \frac{1}{16} \sum_{(x,u)\, \text{unbalanced}}d^{+-}(x,u) \, .\end{equation*} 
Thus, there exists an unbalanced pair $(x,u)$ with $d^{+-}(u,x)\ge d^{+-}(x,u)/16$, a contradiction.\end{proof}

\section{Oriented Six-Cycles}\label{sixcyclessec}

In this section we prove Theorem \ref{sixcycles}.  
Note that this immediately implies Theorem \ref{sixcycle}.  First, a lemma showing that there are many paths of length two ending at vertices with out-degree at least $e(D)/8n$.

\begin{lemma}\label{goodtwopaths} Let $D$ be an oriented graph with $bias(D)\le e(D)/8$. Then $D$ contains at least $e(D)^{2}/8n$ oriented paths of length two whose end point has out-degree at least $e(D)/8n$.\end{lemma}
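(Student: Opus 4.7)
The plan is to combine the strategy of Lemma~\ref{twopaths} with a preliminary pruning of low-out-degree vertices.  First, I would introduce $L=\{u\in V:d^{+}(u)<e(D)/8n\}$, and observe that $e(L,V)=\sum_{u\in L}d^{+}(u)<e(D)/8$.  Applying the bias hypothesis to the pair $(V,L)$ then forces $e(V,L)\le e(D)/4$: either $e(L,V)\le e(V,L)/2$, in which case the definition of $bias(D)$ gives $e(V,L)\le bias(D)\le e(D)/8$, or else $e(V,L)<2e(L,V)<e(D)/4$.  Consequently, the induced subgraph $D'=D[V\setminus L]$ has $e(D')\ge e(D)-e(V,L)-e(L,V)\ge 5e(D)/8$ arcs.

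Next I would rerun the proof of Lemma~\ref{twopaths} inside $D'$, but using $bias(D)$ (not the unknown $bias(D')$) to control the ``badly biased'' vertices.  Writing $d_{0}^{\pm}(y)$ for the in/out-degrees of $y\in V\setminus L$ in $D'$, set $Z'=\{y\in V\setminus L:d_{0}^{-}(y)>2d_{0}^{+}(y)\}$.  Summing over $Z'$ gives $e_{D}(V\setminus L,Z')>2e_{D}(Z',V\setminus L)$, so the bias condition applied to the pair $(V\setminus L,Z')$ yields $e_{D}(V\setminus L,Z')\le bias(D)\le e(D)/8$.  Subtracting from $e(D')\ge 5e(D)/8$, the set $Y'=(V\setminus L)\setminus Z'$ satisfies $\sum_{y\in Y'}d_{0}^{-}(y)\ge e(D)/2$.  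Since $d_{0}^{+}(y)\ge d_{0}^{-}(y)/2$ for every $y\in Y'$, the Cauchy--Schwarz inequality then gives
\begin{equation*}\sum_{y\in Y'}d_{0}^{-}(y)d_{0}^{+}(y)\;\ge\;\frac{1}{2}\sum_{y\in Y'}d_{0}^{-}(y)^{2}\;\ge\;\frac{1}{2n}\Big(\sum_{y\in Y'}d_{0}^{-}(y)\Big)^{2}\;\ge\;\frac{e(D)^{2}}{8n}\,,\end{equation*}
and each path of length two counted on the left-hand side lies entirely in $V\setminus L$, so its end point automatically has out-degree at least $e(D)/8n$ in $D$, as required.

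The only delicate point is that the bias hypothesis is invoked twice --- first to localise the problem to $V\setminus L$, and then inside $V\setminus L$ to handle $Z'$ --- but both applications are legitimate because $bias(D)$ is a maximum over \emph{all} pairs of subsets of $V$, so no separate ``local'' bias assumption on $D'$ is needed.
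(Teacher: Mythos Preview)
Your argument is correct. The two applications of the $bias$ hypothesis are legitimate, the arithmetic $e(D')\ge 5e(D)/8$ and $\sum_{y\in Y'}d_0^{-}(y)\ge e(D)/2$ checks out, and the Cauchy--Schwarz step yields the required $e(D)^{2}/8n$ paths, each of whose end point lies in $V\setminus L$ and hence has out-degree at least $e(D)/8n$.

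Your route differs from the paper's. Rather than deleting the low-out-degree vertices and rerunning the proof of Lemma~\ref{twopaths} on the induced subgraph, the paper works entirely inside $D$ by classifying \emph{arcs}: an arc $\vec{xy}$ is called \emph{very good} if $d^{-}(x)\ge d^{+}(x)/2$, $d^{+}(y)\ge d^{-}(y)/2$, and $d^{-}(y)\ge e(D)/4n$. Two applications of the bias bound (to $Z_1=\{x:d^{+}(x)>2d^{-}(x)\}$ and $Z_2=\{y:d^{-}(y)>2d^{+}(y)\}$) plus a trivial bound on $Z_3=\{y:d^{-}(y)<e(D)/4n\}$ show that at least $e(D)/2$ arcs are very good; Cauchy--Schwarz then counts paths whose second arc is very good, and the conditions on $y$ force $d^{+}(y)\ge e(D)/8n$. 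Your approach is more modular --- it literally reduces to the earlier lemma after a clean pruning step --- while the paper's arc-based argument avoids passing to an induced subgraph. Both use the bias hypothesis exactly twice and arrive at the same constant $e(D)^{2}/8n$.
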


\begin{proof} We say that an arc $\vec{xy}$ is \emph{good} if $d^{-}(x)\ge d^{+}(x)/2$ and $d^{+}(y)\ge d^{-}(y)/2$.  It is \emph{very good} if in addition $d^{-}(y)\ge e(D)/4n$.  We first show that at least $e(D)/2$ arcs are very good.  Let $Z_{1}=\{x:d^{+}(x)>2d^{-}(x)\},\, Z_{2}=\{y:d^{-}(y)>2d^{+}(y)\}$ and $Z_{3}=\{y:d^{-}(y)<e(D)/4n\}$.  An arc is very good unless its start point is in $Z_{1}$ or its end point is in $Z_{2}\cup Z_{3}$.  Since $e(Z_{1},V)>2e(V,Z_{1})$ we have from $(\star)$ that $e(Z_{1},V)\le e(D)/8$.  Similarly $e(V,Z_{2})\le e(D)/8$.  Trivially $e(V,Z_{3})<e(D)/4$.  Hence, at least $e(D)/2$ arcs are very good.  For each vertex $x\in V\setminus Z_{1}$, let $d^{+}_{vg}(x)$ denote the number of very good arcs with start point $x$.  Thus, the number of paths of length two the second arc of which is very good, is at least 
\begin{equation*} \sum_{x\in V\setminus Z_{1}} d^{-}(x)d^{+}_{vg}(x)\ge \frac{1}{2}\sum_{x\in V\setminus Z_{1}}d^{+}_{vg}(x)^{2}\ge \frac{1}{2n}\Big (\sum_{x\in V\setminus Z_{1}} d^{+}_{vg}(x) \Big)^{2}\ge \frac{e(D)^{2}}{8n}\, ,\end{equation*}
where the second inequality follows from the Cauchy-Schwarz inequality.  The proof is now complete, as a path of length two whose second arc $\vec{xy}$ is very good has that $d^{+}(y)\ge d^{-}(y)/2\ge e(D)/8n$.\end{proof}

For our proof of Theorem \ref{sixcycles} we shall need some new notation.  
This notation will allow us to prove the existence of the constants $c,\ep>0$ 
without explicitly calculating them.  Specifically, $\gamma(\ep)$ denotes any 
decreasing function of $\ep$ which is positive for all sufficiently small $\ep>0$.  
While $\delta(\ep)$ denotes any increasing function of $\ep$ which has limit $0$ as $\ep\to 0$.    
It should be understood that $\gamma(\ep)$ and $\delta(\ep)$ are not functions, 
but rather classes of functions, in the sense that the commonly used $O(\cdot)$ notation represents 
a class of functions rather than an individual function. 
To give some examples to clarify this notation, 
we note that for two arbitrary constants $c_1, c_2 >0$, the function $-c_1 \epsilon +c_2$ is (of class)
$\gamma(\epsilon)$, while $-c_1 \epsilon$ is not $\gamma(\epsilon)$ since for all 
non-negative (even sufficiently small) 
values of $\epsilon$, $-c_1 \epsilon$ is negative. In the same way, for two arbitrary constants $c_1, c_2> 0$, the function $c_1\epsilon$ is (of class) 
$\delta(\epsilon)$ while the function $c_1\epsilon+c_2$ is not $\delta(\epsilon)$. 
It should be now clear that $\gamma(\ep)-\delta(\ep)=\gamma(\ep)$ and $\gamma(\ep)/C=\gamma(\ep)$ for any constant $C$, 
and also that $\gamma(\epsilon)\gamma(\epsilon)=\gamma(\epsilon)$.

In this terminology, to prove Theorem \ref{sixcycles} it suffices to show that every oriented graph $D$ with 
$bias(D)<\ep e(D)^{2}/n^{2}$ contains at least $\gamma(\ep)e(D)^{6}/n^{6}$ oriented six-cycles.

\noindent Let $D$ be such that $bias(D)<\ep e(D)^{2}/n^{2}$.  
We state some consequences of previous results in our new notation.  
Lemma \ref{goodtwopaths} tells us that $D$ contains at least $\gamma(\ep)e(D)^{2}/n$ paths of 
length two whose end point has out-degree 
at least $\gamma(\ep)e(D)/n$.  
While Lemma \ref{balance} tells us that $D$ contains 
at most $\delta(\ep)e(D)^{2}/n$ unbalanced paths of length two.  
Together, these results imply that $D$ contains $\gamma(\ep)e(D)^{2}/n$ paths of length two 
which are balanced and which have end points of out-degree at least $\gamma(\ep)e(D)/n$.  

\noindent We are now in position to start our proof.

\begin{proof}[Proof of Theorem \ref{sixcycles}]  Let $D$ be an oriented graph 
with $bias(D)<\ep e(D)^{2}/n^{2}$.  
Say that a path of length two which is balanced and whose end point has out-degree at 
least $\gamma(\ep)e(D)/n$ is a \emph{great} path.  
We found above that $D$ contains at least $\gamma(\ep)e(D)^{2}/n$ great paths.  
We now form a weighted directed graph $H$ on the same vertex set $V$ as follows. 
Start with the complete directed graph (i.e., the directed graph with arcs $\vec{xy}$ (and $\vec{yx}$) 
for all $x\neq y$), weight each arc $\vec{xu}$ by $w_{\vec{xu}}$ the number of great paths from $x$ to $u$, and delete arcs of weight zero.  
By the above result, the total weight of arcs in $H$ is at least $\gamma(\ep)e(D)^{2}/n$. 
Now divide the set of arcs of $H$ into two categories of heavy and light arcs: a heavy arc is an arc of 
weight at least $e(D)/n$ while a light arc has weight less than $e(D)/n$.  
Since the total weight in $H$ is at least $\gamma(\ep)e(D)^{2}/n$, 
either the total weight of heavy arcs or the total weight of the light arcs is at 
least $\gamma(\ep)e(D)^{2}/n$.
 We shall refer to these as Case I and Case II respectively, 
and denote by $H'$ the subgraph consisting of arcs of appropriate weight 
(heavy arcs of weight $\ge e(D)/n$ in Case I and light arcs of weight $<e(D)/n$ in Case II), 
although for the time being the proof continues for the two cases in parallel. 
 
For a weighted oriented graph and a vertex $v$, the out-weight (resp. in-weight) of $v$ is the total 
weight of the out-going (resp. in-coming) arcs of $v$.
Let $H''$ be the subgraph of $H'$ consisting of all the arcs 
whose start vertex has out-weight at least $\gamma(\ep)e(D)^{2}/n^{2}$. 
Since to total weight in $H'$ is at least $\gamma(\epsilon) e(D)^2/n$, a simple averaging argument shows that
there is a function $\gamma(\ep)$ such that the total weight of  arcs of $H''$ is 
at least $\gamma(\ep)e(D)^{2}/n$.  
For each vertex $x$ with positive out-weight in $H''$, 
we denote by $e_{x}$ its out-weight; in particular, note that by the definition of $H''$, 
$e_x \geq \gamma(\ep)e(D)^{2}/n^{2}$.  

\medskip
Our proof will be based on a local argument concerning the distribution of weights of $H''$ 
around each vertex of positive out-weight. To simplify the presentation, fix such a vertex $x$ and define a 
new (induced) weighting $\omega^x$, or simply $\omega$ if there is no risk of confusion, on the vertices  
by setting $\omega_{u}$ equal to the weight of 
the arc $\vec{xu}$ in $H''$ (if $\vec xu$ is not an arc of $H''$, simply define $\omega_u=0$). 
 Define the weight of a path of length two $\vec{uy},\vec{yv}$ in $D$ to be $\omega_{u}\omega_{v}$. 
We note again that the weighting $\omega$ depends on the choice of $x$ and is simply induced from the weighting of 
out-going arcs from $x$. 

{\bf Claim.} Under this weighting of the vertices, the total weight of paths of length two in $D$ is at least $\gamma(\ep)e_{x}^{2}e(D)^{2}/n^{3}$.  

Before presenting the proof of this Claim, and to keep the continuity of the proof, we show how the Claim 
allows to finish the proof of Theorem~\ref{sixcycles}. We will deduce from the Claim that there are at 
least $\gamma(\ep)e(D)^{6}/n^{6}$ oriented six-cycles in $D$.  
We recall that, for a fixed vertex $x$ and in terms of the original weighting of the arcs of $H''$, 
the Claim is discussing the total weight of paths of length two in $D$ where the weight of the path 
$\vec{uy},\vec{yv}$ is defined to be $w_{\vec{xu}}w_{\vec{xv}}$ 
(this now being the weight in $H''$; recall that $\omega_u = w_{\vec {xu}}$).  
Recall also that, by the definition of the weighting of the arcs, if $w_{\vec{xv}}>0$, 
then there are $w_{\vec{xv}}$ great paths of length two from $x$ to $v$ in $D$, 
and in particular this tells us that $(x,v)$ is balanced, 
and so that in addition there are at least $\gamma(\ep)w_{\vec{xv}}$ paths of 
length two from $v$ to $x$. This in turn implies that each path of length two 
$\vec{uy},\vec{yv}$ allows us to find $\gamma(\ep)w_{\vec{xu}}w_{\vec{xv}}$ oriented six-cycles 
containing $x$ (simply combine each of the $w_{\vec{xu}}$ paths of length two from $x$ to $u$ 
with the path $\vec{uy},\vec{yv}$ followed by each of 
the $\gamma(\ep)w_{\vec{xv}}$ paths of length two from $v$ to $x$).  Thus, writing $C_{x}$ for the number of oriented six-cycles in $D$ which contain $x$ and using the Claim, we have that $C_{x}\ge \gamma(\ep)e_{x}^{2}e(D)^{2}/n^{3}$.  Summing $C_{x}$ over all vertices $x$ with positive out-weight in $H''$, we obtain that $\sum_{x}C_{x}\ge \gamma(\ep)\sum_{x} e_{x}^{2}e(D)^{2}/n^{3}$.  By Cauchy-Scwartz, this is at least $\gamma(\ep)(\sum_{x}e_{x})^{2}e(D)^{2}/n^{4}$, and so, since the sum expresses the total weight of the arcs of $H''$, $\sum_{x}C_{x}\ge \gamma(\ep)e(D)^{6}/n^{6}$. This sum counts each oriented six-cycle at most six times, so the number of oriented six-cycles in $D$ is $\gamma(\ep)e(D)^{6}/n^{6}$,
and Theorem~\ref{sixcycles} follows.

Thus, we are only left to prove the above Claim. 

{\it Proof of Claim.} We recall that the weighting of vertices 
(induced by the fixed vertex $x$ of positive out-weight) is simply defined in such a way that 
the weight $\omega_u$ of a vertex $u$ is the weight $w_{\vec{xu}}$ of the arc $\vec xu$. 
We will divide the proof into two parts, depending on whether we are in Case I or in Case II. 

\noindent (Recall that in Case I, the subgraph $H'$ consists of all the heavy arcs, i.e., all the arcs 
of weight $\geq \gamma(\ep)e(D)/n$, while
 in Case II, $H'$ is the subgraph 
containing all the light arcs.)

The proof is simpler if we are in Case I.  In this case, 
every vertex with positive weight has weight at least $e(D)/n$, so that every path of length two 
with positive weight has weight at least $e(D)^{2}/n^{2}$. It thus suffices to find 
at least $\gamma(\ep)e_{x}^{2}/n$ paths of length two of positive weight.  
We denote by $U$ the set of vertices of positive weight, i.e., 
\[U:=\{u\in V(D)\,|\, \omega_u >0\}.\] 
Since $x$ has positive out-weight in $H''$, by the definition of $H''$, it has out-weight $e_{x}\ge
\gamma(\ep)e(D)^{2}/n^{2}$. 
 This implies that $e(\Gamma^{+}(x),U)=e_{x}\ge \gamma(\ep)e(D)^{2}/n^{2}$ and so, trivially, 
$e(V,U)\ge e_{x}\ge \gamma(\ep)e(D)^{2}/n^{2}$. Let now $Z$ be the set of all vertices which are 
``unbalanced`` with respect to $U$, namely, let  
$$Z:=\{y\,\,|\,\,e(\{y\},U)>2e(U,\{y\})\}.$$

Our condition on $bias(D)$ implies that $e(Z,U)<\ep e(D)^{2}/n^{2}]$.  
We infer that $e(Y,U)\ge \gamma(\ep)e_{x}$, where $Y=V\setminus Z$, 
and so the number of paths of length two from $U$ to $U$ is at least 
\begin{equation*} \sum_{y\in Y}e(\{y\},U)e(U,\{y\})\ge \frac{1}{2}\sum_{y\in Y}e(\{y\},U)^{2}\ge \frac{1}{2n}\Big(\sum_{y\in Y}e(\{y\},U)\Big)^{2}\ge \gamma(\ep) e_{x}^{2}/n\, .\end{equation*}  
Which completes the proof in Case I.  

\medskip

 For Case II, we divide the vertex set according to weights. For each $i\ge 1$, define 
$$V_{i}:=\Bigl\{\,\,u\,\, |\,\, \omega_{u}\in \bigl[\frac{e(D)}{2^{i}n}\,\,,\,\frac{e(D)}{2^{i-1}n}\bigr)\,\,\Bigr\}.$$ 
 Since we are in Case II, all the weights $\omega_u$ are less than $e(D)/n$ and thus,  
$V_+=\bigsqcup_{i\ge 1}V_{i}$, where $V_+$ is the set of all the vertices of positive weight.

The idea in this case is roughly speaking as follows 
(to make this idea work, we have to restrict the subset of indices to some subset $I$, see below): 
To prove the claim, that the total weight of paths of length two in $D$ 
with respect to the weighting $\omega$ is at least $\gamma(\ep)e_{x}^{2}e(D)^{2}/n^{3}$, 
we will only consider a subset of all the paths of length two from $V_+$ to $V_+$, and decompose the set 
of all such paths into different sorts depending on the starting point of the path being in 
$V_i$ and the end point of the path being in $V_j$, for $i,j \in I$ (for a subset $I$ of the index set that we will
define below). Let $e_2(V_i,V_j)$ be the number of paths of length two from $V_i$ to $V_j$. By the definition of $V_i$, 
all the weights $\omega_u$ are ``almost'' uniform for $u \in V_i$, namely,
 $\frac{e(D)}{2^{i}n}\leq \omega_u < \frac{e(D)}{2^{i-1}n}$. It follows that the total contribution 
of the paths of length two from $V_i$ to $V_j$, for $i,j\in I$, is 
$\Theta\Bigl(\,\sum_{i,j\in I}\frac{e(D)^2}{2^{i+j-2}n^2}e_2(V_i,V_j)\Bigr)$, and thus, we only need to show 
that $\sum_{i,j\in I}\frac{e(D)^2}{2^{i+j-2}n^2}e_2(V_i,V_j) \geq \gamma(\ep)e_{x}^{2}e(D)^{2}/n^{3}$. 
This is exactly what we will do in the following.

\medskip

For each $i$, let $s_{i}=|V_{i}|$.  Since by our assumption, $x$ 
has positive out-weight in $H''$, it must have out-weight $e_{x}\ge
\gamma(\ep)e(D)^{2}/n^{2}$.  This implies that 
\[e(\Gamma^{+}(x),V_{+})=e_{x}\ge \gamma(\ep)e(D)^{2}/n^{2}.\]  
Given that for each $i$, $\omega_u$ is at most $\frac{e(D)}{2^{i-1}n}$, a simple summation shows that
 at most $c e(D)^{2}/n^{2}$ of these arcs go to sets $V_{i}$ for which $s_{i}< ce(D)/n$, 
this for any constant $c>0$. We infer, by the definition of the class $\gamma(\ep)$, 
that for some sufficiently small constant $c$, we have 
\[ e(\Gamma^{+}(x),V_{I})\ge \gamma(\ep)e_{x},\]
 where $I=\Bigl\{i\,\,|\,s_{i}\ge ce(D)/n\Bigr\}$ and $V_{I}=\bigcup_{i\in I}V_{i}$.

\noindent Since $2\sum_{i\in I}s_{i}\frac{e(D)}{2^{i}n}\ge e(\Gamma^{+}(x),V_{I})$, it follows that 
\begin{equation}\label{sumi}\sum_{i\in I}\frac{s_{i}e(D)}{2^{i-1}n}\ge \gamma(\ep)e_{x}\, .\end{equation}

For every vertex $u\in V_+$, the arc $\vec{xu}$ has positive weight in $H''\ssq H$, and by the definition of 
$H$, we know that $u$ must have out-degree at least $\gamma(\ep)e(D)/n$ in $D$.  
Therefor, for each $i\in I$, one has  
\[e(V_{i},V)= \sum_{u \in V_i} d^+(u) \,\geq\, |V_i|\,\gamma(\epsilon)\,e(D)/n \,\geq\, c\gamma(\ep)e(D)^{2}/n^{2} = \gamma(\ep)e(D)^{2}/n^{2}.\] 
Define $Y_i$ to be the set of ``balanced'' vertices with respect to $V_i$, namely, 
\[Y_{i}=\Bigl\{\,y\,\,|\,\,e(V_{i},\{y\})\le 2e(\{y\},V_{i})\,\Bigr\}.\]
Using the definition of $bias$, we may bound by $\ep e(D)^{2}/n^{2}$ the number of arcs going to 
vertices $y\not\in Y_i$, deducing that 
\begin{equation}\label{eq:2}
e(V_{i},Y_{i})\ge \gamma(\epsilon)s_ie(D)/n\,\, \bigl(\geq \gamma(\ep)e(D)^{2}/n^{2}\bigr). 
\end{equation}
 
Consider the weighted bipartite simple graph $F$ which has parts $I$ and $V$, 
has an edge $iy$ whenever $y\in Y_{i}$, and has a weight $\eta_{iy}$ on the edge $iy$ 
given by $\eta_{iy}=\min\bigl\{\,e(\{y\},V_{i})\,,\,e(V_{i},\{y\})\,\bigr\}$.  Now for each walk of 
length two $iy,yj$ in $F$, we know there are 
at least $\eta_{iy}\eta_{jy}$ paths of length two from $V_{i}$ to $V_{j}$ in $D$ passing through $y$
(there paths are constructed by $e(V_i,\{y\}) \geq \eta_{iy}$ from $V_i$ to $y$ 
and $e(\{y\},V_j)\ge \eta_{jy}$ arcs from $y$ to $V_j$). 
 It follows that $e_2(V_i,V_j) \geq \sum_{y\in V} \eta_{iy}\eta_{jy}$.

Thus, to finish the proof of the claim, given that the weight $\omega_u$ of a vertex $u$ in $V_i$ is in 
the interval 
$ \bigl[\frac{e(D)}{2^{i}n}\,\,,\,\frac{e(D)}{2^{i-1}n}\bigr)$, 
and that the total weight of paths of length two from 
$\bigsqcup_{i\in I} V_i$ to $\bigsqcup_{i\in I} V_i$ 
is $\Theta\Bigl(\,\sum_{i,j\in I}\frac{e(D)^2}{2^{i+j-2}n^2}e_2(V_i,V_j)\Bigr)$, it suffices to prove that 

\begin{equation}\label{eq:3}
\sum_{i,j\in I}\sum_{y\in V}\,\, \frac{e(D)^2}{2^{i+j-2}n^2}\eta_{iy}\eta_{jy}\,\, \geq\,\, \gamma(\ep)e_{x}^{2}e(D)^{2}/n^{3}.
\end{equation}

Define the weight of a walk $iy,yj$ in $F$ to be  $e(D)^{2}\eta_{iy}\eta_{jy}/2^{i+j-2}n^{2}$. 
so that we will have to show that the total weight of walks of length two in $F$ is at least 
$\gamma(\ep)e_{x}^{2}e(D)^{2}/n^{3}$. 
This follows immediately from the more general result 
(itself a trivial consequence of the Cauchy-Schwarz inequality) 
that for any graph with weighted vertices and edges, the total weight of walks of length 
two (where the walk $uy,yv$ is assigned weight $\eta_{u}\eta_{uy}\eta_{yv}\eta_{v}$) is at least $W^{2}/n$, 
where $W$ denotes $\sum_{u}\eta_{u}\sum_{y}\eta_{uy}$.  In our case, in the bipartite graph $F$, 
the edge weights are $\eta_{iy}$ for $i\in I$ and $y\in Y_i$, and the vertex weights are defined by 
$\eta_i = \frac{e(D)}{2^{i-1}n}$ for $i\in I$ and $\eta(y) = 0$ for $y \in V$. We have
 \begin{align*}
W&=\sum_{i\in I}\eta_{i}\sum_{y}\eta_{iy}\\
&\geq \sum_{i\in I}\frac{e(D)}{2^{i-1}n}\,e(V_{i},Y_i)&(\textrm{by the definition of }\eta_{iy})\\
& \ge \frac {\gamma(\epsilon)e(D)}{n}\,\, \sum_{i\in I}\frac {s_i\,e(D)}{2^{i-1}n}, 
& (\textrm{by Inequality~(\ref{eq:2})})\\
& \ge \gamma(\epsilon)e_xe(D)/n & (\textrm{by Inequality~(\ref{sumi}}))
 \end{align*}
 The left term of~(\ref{eq:3}) is at 
least $W^2/n \geq \gamma(\ep)^2e_x^2e(D)^2/n^3 = \gamma(\ep)e_x^2e(D)^2/n^3$ and Inequality~(\ref{eq:3}) follows. This completes the proof of the Claim (and so the proof of the theorem).
\end{proof}

\section{Examples}\label{bpsec} 

To prove that a result such as Theorem \ref{fourcycle} is best possible up to the choice of constant,  it would not suffice to produce just one oriented graph which does not contain an oriented four-cycle and has $bias(D)=Ke(D)^{2}/n^{2}$, for some constant $K$.  Nor would it suffice to produce a class of examples that were all of the same density.  The theorem applies across a large range of densities, so one must produce examples across a large range of densities.  We provide in this section a wide class of examples of oriented graphs which do not contain oriented four-cycles and which have $bias(D)\le Ke(D)^{2}/n^{2}$, where $K$ is some fixed constant.

Our initial examples are obtained as random orientations of four-cycle free simple graphs.  These will have approximately $n^{3/2}$ arcs.  We will then obtain more dense examples as blow ups of these initial examples. 
 Before we do this, we first prove a lemma concerning the value of $bias(D)$ (and certain variants) for randomly oriented graphs, prove a lemma concerning the value of $bias(D)$ when $D$ is blow up of some other oriented graph, and recall a result concerning four-cycle free simple graphs.

We define a more general concept of $bias$.  For $\gamma\in(0,1)$, we say that a subgraph $E(A,B)$ is $\gamma$-biased if $e(B,A)\le \gamma e(A,B)$, and we write $bias_{\gamma}(D)$ for the size of the largest $\gamma$-biased subgraph of $D$, so that, \begin{equation*} bias_{\gamma}(D)=\max\{e(A,B):A,B\ssq V\quad \text{with}\quad e(B,A)\le \gamma e(A,B)\} \, .\end{equation*} Note that $bias(D)$ is of
course $bias_{1/2}(D)$.

\begin{lemma}\label{random} Given $\gamma \in (0,1)$, there exists $K_\gamma \in
\mathbb{R}$ such that for every simple graph $G$ on $n$ vertices, there exists
an oriented graph $D$ obtained by orienting the edges of $G$ with $bias_{\gamma}(D)<K_\gamma
n$.  Furthermore, a random orientation $D$ of $G$ has $bias_{\gamma}(D)<K_\gamma
n$ with high probability.
\end{lemma}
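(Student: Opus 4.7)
The plan is to take $D$ to be a uniformly random orientation of $G$ (each edge independently oriented in one of its two directions with probability $\tfrac{1}{2}$), and to show, via Hoeffding's inequality and a union bound over all ordered pairs $(A,B)$ of subsets of $V$, that with high probability no pair yields a $\gamma$-biased subgraph of size $\ge K_\gamma n$, provided $K_\gamma$ is a sufficiently large constant depending on $\gamma$. The existence statement then follows by the probabilistic method.

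Fix an ordered pair $(A,B)$. I would first decompose the relevant edges of $G$ according to how they meet $A$ and $B$: the edges with both endpoints in $A\cap B$ contribute $1$ to each of $e(A,B)$ and $e(B,A)$ irrespective of orientation, so let $t$ denote the number of such edges; the remaining relevant edges, namely those with one endpoint in $A\setminus B$ and the other in $B\setminus A$, or with exactly one endpoint in $A\cap B$ and the other in $(A\setminus B)\cup(B\setminus A)$, each contribute $1$ to exactly one of $e(A,B), e(B,A)$, determined by the random orientation. Writing $N$ for the number of these asymmetric edges and $S:=e(A,B)-e(B,A)$, we get $S=\sum_e \varepsilon_e$ with $N$ independent uniform $\pm 1$ summands, and $e(A,B)+e(B,A)=N+2t=:M$. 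The condition $e(B,A)\le \gamma e(A,B)$ rearranges cleanly to $S\ge \tfrac{1-\gamma}{1+\gamma}M$.

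A short case analysis then suffices. If $t>\tfrac{\gamma}{1-\gamma}N$ then $\tfrac{1-\gamma}{1+\gamma}M>N\ge |S|$, so no such pair can be $\gamma$-biased at all. Otherwise $N\le\tfrac{1+\gamma}{1-\gamma}M$, and Hoeffding's inequality yields
\[
\Pr\!\left[\,S\ge \tfrac{1-\gamma}{1+\gamma}M\,\right]\;\le\;\exp\!\Bigl(-\tfrac{(1-\gamma)^{2}M^{2}}{2(1+\gamma)^{2}N}\Bigr)\;\le\;\exp(-c_\gamma M),
\]
with $c_\gamma:=(1-\gamma)^{3}/(2(1+\gamma)^{3})$. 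On the bad event to be excluded one has $M\ge e(A,B)\ge K_\gamma n$, so the probability is at most $\exp(-c_\gamma K_\gamma n)$. A union bound over the at most $4^{n}$ ordered pairs $(A,B)$ (one classification per vertex among $A\setminus B$, $B\setminus A$, $A\cap B$, $V\setminus(A\cup B)$) gives total failure probability at most $4^{n}\exp(-c_\gamma K_\gamma n)$, which is $o(1)$ once $K_\gamma>2\log 4/c_\gamma$, and existence follows.

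The main obstacle I anticipate is the correct treatment of edges inside $A\cap B$: they contribute symmetrically to $e(A,B)$ and $e(B,A)$ regardless of orientation, and one has to isolate this symmetric contribution from the random $\pm 1$ terms and recognise that it actively works against a pair being $\gamma$-biased (because it deterministically inflates the smaller side). Once this decomposition is in place and one has translated the biased condition into a one-sided large-deviation event for a $\pm 1$ random walk, the remainder is a routine concentration-plus-union-bound computation, with the only minor subtlety being the tracking of how $c_\gamma$ degenerates as $\gamma\to 1$.
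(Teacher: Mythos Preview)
Your proof is correct and follows essentially the same approach as the paper: apply a concentration inequality to each ordered pair $(A,B)$ and union-bound over the $4^{n}$ such pairs. The paper simply asserts the bound $\mathbb{P}(e(B,A)\le\gamma e(A,B))\le\exp(-c(\gamma)e_{AB})$ via Chernoff without unpacking the role of $A\cap B$, whereas you supply those details; note, incidentally, that your case split is unnecessary since $N\le M$ always holds and already gives $\exp(-\tfrac{(1-\gamma)^{2}}{2(1+\gamma)^{2}}M)$, but this redundancy does no harm.
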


\begin{proof}

For a pair $A,B\ssq V(G)$ we write $e_{AB}$ for the number of edges between
$A$ and $B$ in the graph $G$. Let $D$ be obtained from $G$ by orienting its
edges at random.  From Chernoff's inequality \cite{Ch} (see Lemma \ref{Chernoff}) we have \begin{displaymath}\mathbb{P}(e(B,A)\le \gamma e(A,B))\le \exp(-c(\gamma)e_{AB})\le
\exp(-c(\gamma)K_\gamma n)\, , \end{displaymath} where $c(\gamma)$ is a positive constant
dependent on $\gamma$. We set $K_{\gamma}=2/c(\gamma)$.  
We shall prove that $\mathbb{P}(bias_{\gamma}(D)\ge K_{\gamma}n)\le 1.8^{-n}$. This proves the lemma.  The event $bias_{\gamma}(D)\ge K_{\gamma}n$ can occur only if there is a pair $A,B\ssq V(G)$ with $e_{G}(A,B)\ge K_{\gamma}n$ for which $e(B,A)\le \gamma e(A,B)$.  There are at most $4^{n}$ such pairs $(A,B)$ and for each such pair the probability that $e(B,A)\le \gamma e(A,B)$ is at most $\exp(-c(\gamma)e_{AB})\le \exp(-c(\gamma)K_{\gamma}n)=\exp(-2n)$.  Thus, by the union bound, $\mathbb{P}(bias_{\gamma}(D)\ge K_{\gamma}n)\le 4^{n} \exp(-2n)\le 1.8^{-n}$.\end{proof}

If an oriented graph $D'$ contains no large biased subgraphs, then this property carries over, in a weakened form, to a blow-up $D$ of $D'$.  A blow-up of an oriented graph $D'$ is defined as follows.

\begin{definition}\rm Let $D'$ be an oriented graph on $\{1,\dots ,m\}$ and let $l\in \mathbb{N}$. 
The $l$-{\it blow-up} of $D'$ is 
the oriented graph $D$ with vertex set $V=V_{1}\cup\dots \cup V_{m}$, 
where the sets $V_{i}$ are disjoint and each of cardinality $l$, 
and with arc set $E(D)=\cup_{ij\in E(D')}B(V_{i},V_{j})$, 
where $B(V_{i},V_{j})$ represents 
the complete bipartite oriented graph on $V_{i}\cup V_{j}$ 
with all arcs going from $V_{i}$ to $V_{j}$. 
Each $V_i$ is called a {\it cell} of the blow-up.\end{definition}

The key result we need about blow-ups is,

\begin{lemma}\label{blowup} If $bias_{0.9}(D')<f$ and $D$ is an $l$-blow-up of $D'$, then $bias(D)<16fl^{2}$.\end{lemma}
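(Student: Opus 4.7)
The plan is to argue the contrapositive by probabilistic rounding: a large biased subgraph of the blow-up $D$ should, after independent coin tosses on the vertices of $D'$, yield a $0.9$-biased subgraph of $D'$ of size at least $f$, contradicting the hypothesis $bias_{0.9}(D') < f$.

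So I would suppose, toward a contradiction, that $bias(D) \ge 16 f l^{2}$, and fix $A, B \ssq V(D)$ with $e_D(A,B) \ge 16 f l^{2}$ and $e_D(B,A) \le e_D(A,B)/2$. Writing $a_i := |A \cap V_i|$ and $b_i := |B \cap V_i|$, the definition of the blow-up gives the bilinear expressions
\begin{equation*}
e_D(A,B) \,=\, \sum_{ij\in E(D')} a_i b_j, \qquad e_D(B,A) \,=\, \sum_{ij\in E(D')} b_i a_j .
\end{equation*}
Next I would sample random subsets $A', B' \ssq V(D')$ using $2m$ independent coin tosses: place each $i$ into $A'$ with probability $a_i/l$ and, independently, into $B'$ with probability $b_i/l$. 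Since $D'$ has no loops, the two indicators appearing in each term of $e_{D'}(A',B')$ or $e_{D'}(B',A')$ are products of independent coins, so linearity of expectation yields
\begin{equation*}
\mathbb{E}\bigl[e_{D'}(A',B')\bigr] \,=\, e_D(A,B)/l^{2} \,\ge\, 16 f, \qquad \mathbb{E}\bigl[e_{D'}(B',A')\bigr] \,\le\, e_D(A,B)/(2 l^{2}) .
\end{equation*}

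To extract a single deterministic pair I would consider the random variable $X := e_{D'}(A',B') - \tfrac{10}{9}\, e_{D'}(B',A')$, whose expectation is at least $\tfrac{4}{9}\cdot e_D(A,B)/l^{2} \ge 64 f/9$. Fix any outcome with $X \ge 64 f/9$: then $e_{D'}(A',B') \ge X > f$, while $e_{D'}(B',A') \le \tfrac{9}{10}\bigl(e_{D'}(A',B') - 64 f/9\bigr) \le 0.9\, e_{D'}(A',B')$. This pair contradicts the hypothesis $bias_{0.9}(D') < f$, completing the argument.

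The only real point to get right is the interplay between the two constants in the statement. Weakening the bias ratio from $1/2$ (on the $D$ side) to $0.9$ (on the $D'$ side) is precisely what provides the slack to pass from an expectation inequality to a pointwise one without any concentration argument, and the multiplicative factor $16$ is a comfortably generous margin absorbing the resulting loss. I do not anticipate any technical obstacle beyond keeping track of these constants.
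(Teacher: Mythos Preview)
Your argument is correct, and it is genuinely different from the paper's. The paper proceeds deterministically: starting from $A,B$ with $e(A,B)\ge 16fl^{2}$ and $e(B,A)\le e(A,B)/2$, it first prunes $A$ to the subset $A'=\{x\in A: e(\{x\},B)\ge \tfrac{3}{2}e(B,\{x\})\}$, observes that $e(A',B)\ge e(A,B)/4$, and then replaces $A'$ by the union $A''$ of all cells meeting $A'$, using the homogeneity of blow-up cells to preserve the vertex-wise $3/2$ inequality. A second round of the same pruning-then-expanding manoeuvre on the $B$ side yields cell-unions $A'',B''$ with $e(A'',B'')\ge fl^{2}$ and $e(B'',A'')\le 0.9\,e(A'',B'')$, which project to the desired $I,J$ in $D'$. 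Your randomized-rounding approach bypasses all of this: writing $e_D(A,B)$ and $e_D(B,A)$ as the bilinear forms $\sum_{ij\in E(D')}a_ib_j$ and $\sum_{ij\in E(D')}b_ia_j$, you round each cell index into $A'$ and $B'$ independently with probabilities $a_i/l$ and $b_i/l$, and the single linear combination $X=e_{D'}(A',B')-\tfrac{10}{9}e_{D'}(B',A')$ has expectation at least $64f/9$, so some outcome satisfies both $e_{D'}(A',B')>f$ and $e_{D'}(B',A')\le 0.9\,e_{D'}(A',B')$ simultaneously. Your route is shorter and avoids the two-stage pruning entirely; the paper's route is constructive and makes explicit use of the cell homogeneity. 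Both exploit the same slack (the gap between the ratios $1/2$ and $0.9$) in essentially the same way, and the factor $16$ is ample for either.
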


\begin{proof} Let $D$ be an $l$-blow-up of $D'$.  We suppose that $bias(D)\ge 16fl^{2}$ and use this to show that $bias_{0.9}(D')\ge f$.  Our assumption gives us that there exist sets $A,B\ssq V(D)$ with $e(A,B)\ge 16fl^{2}$ and $e(B,A)\le e(A,B)/2$.  We use this irregularity between $A$ and $B$, this bias in the direction from $A$ to $B$, to find subsets $I,J\ssq \{1,\dots ,m\}$ such that in $D'$ we have $e_{D'}(I,J)\ge f$ and $e_{D'}(J,I)\le 0.9 e(I,J)$, this will prove $bias_{0.9}(D')\ge f$ and so will complete the proof.

As a warm-up, we first consider the easier case when $A$ and $B$ are both unions of cells of the blow-up.  
In this case $A=\cup_{I}V_{i}$ and $B=\cup_{J}V_{j}$.  Which gives \begin{equation*} e(I,J)=\frac{e(A,B)}{l^{2}}\ge \frac{16fl^{2}}{l^{2}}=16f \ge f \, ,\end{equation*} while \begin{equation*} e(J,I)=\frac{e(B,A)}{l^{2}}\le \frac{e(B,A)}{2l^{2}}=\frac{e(I,J)}{2}\, .\end{equation*}

Now for the general case, if there is a relative deficiency in the number of arcs from $B$ to $A$, we identify the vertices of $A$ responsible for this.  Let \begin{equation*} A'=\Big\{x\in A:e(\{x\},B)\ge
\frac{3}{2}e(B,\{x\})\Big\}\, .\end{equation*}  Note that $e(B,A\setminus A')\ge
2e(A\setminus A',B)/3$ so that \begin{equation*} e(A,B)\ge
2e(B,A)\ge 2e(B,A\setminus A')\ge \frac{4}{3}e(A\setminus A',B)\, .
\end{equation*} 
Thus, $e(A\setminus A',B)\le 3e(A,B)/4$,
and so $e(A',B)\ge e(A,B)/4\ge 4f l^{2}$.  Now, let
$I=\{i:V_{i}\cap A'\not = \phi \}$ and let $A''=\cup_{I}V_{i}$.  By the homogeneity of parts of the blow-up, we have for all $x\in A''$ that \begin{equation*} e(\{x\},B)\ge \frac{3}{2}e(B,\{x\})\, .\end{equation*}  Also $e(A'',B)\ge e(A',B)\ge 4fl^{2}$.  We now begin a similar procedure to find $B''$. We let \begin{equation*} B'=\Big\{y\in B:e(A'',\{y\})>
\frac{10}{9}e(\{y\},A'')\Big\}\, .\end{equation*} This implies that $e(B\setminus B',A'')\ge
9e(A'',B\setminus B')/10$, so that \begin{equation*} e(A'',B)\ge
\frac{3}{2}e(B,A'')\ge \frac{3}{2} e(B\setminus B',A'')\ge
\frac{27}{20}e(A'',B\setminus B')\, . \end{equation*} 
Thus,
$e(A'',B\setminus B')\le 20e(A'',B)/27 $, and so \begin{equation*}e(A'',B')\ge
\frac{7}{27}e(A'',B) \ge \frac{1}{4}e(A'',B) \ge fl^{2} \, .\end{equation*}  Let
$J=\{j:V_{j}\cap B'\not = \phi\}$, and set $B''=\cup_{J} V_{j}$. With a similar
argument to that given previously we obtain that $e(A'',B'')\ge e(A'',B')\ge
fl^{2}$ and 
\begin{equation*}e(B'',A'')< \frac{9}{10}e(A'',B'')\, .\end{equation*} 
This tells us that in $D'$ we have $e(I,J)\ge fl^{2}/l^{2} = f$, while 
\begin{equation*} e(J,I)=\frac{e(B'',A'')}{l^{2}}<\frac{9e(A'',B'')}{10l^{2}}=\frac{9e(I,J)}{10}\, .\end{equation*} 
\end{proof}

The final piece of information we need before stating our examples concerns 
the existence of large four-cycle free simple graphs.  Let $q$ be a prime
power.  The Erd\H os-R\'enyi graph $G$ \cite{ERS} has
$V(G)$ being the set of points of the finite projective plane $PG(2,q)$ over
the field of order $q$ (so that $n=q^{2}+q+1$), and an edge between $(x,y,z)$
and $(x',y',z')$ if and only if $xx'+yy'+zz'=0$. In fact this implies
$e(G)=\frac{1}{2}q(q+1)^{2}$ and this graph does not contain a four-cycle. So
for
all $n$ of the form $q^{2}+q+1$ (where $q$ is a prime power), there is a graph
$G$ on $n$ vertices with at least $\frac{1}{2}n^{3/2}$ edges which does not
contain a four-cycle. We would like a result which holds for all $n$. We recall
that Bertrand's Postulate states that for all $k\ge 2$, there is a prime between
$k$ and $2k$, combining this with the above example one may deduce the following.

\begin{lemma} Given $n\ge 2$, there exists a graph $G$ on $n$ vertices with at
least $\frac{1}{20}n^{3/2}$ edges which does not contain a four
cycle.\end{lemma}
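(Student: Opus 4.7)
The plan is to combine the Erd\H os--R\'enyi polarity graph construction recalled above with Bertrand's postulate, padding with isolated vertices as needed. First, I would note the trivial fact that adding isolated vertices to a four-cycle free graph preserves both four-cycle freeness and the number of edges. Thus it suffices to exhibit, for each $n \ge 2$, a prime $q$ with $q^2+q+1 \le n$ (so the Erd\H os--R\'enyi graph fits inside an $n$-vertex graph) and $\tfrac{1}{2}q(q+1)^2 \ge \tfrac{1}{20}n^{3/2}$ (so the edge count is enough). This reduces to finding a prime $q$ reasonably close to $\sqrt{n}$ from below: roughly, $q \in [c\sqrt{n},\sqrt{n}-1]$ for some constant $c > 10^{-1/3} \approx 0.464$.

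Next, I would invoke Bertrand's postulate with $m := \lfloor(\sqrt{n}-1)/2\rfloor$, which yields a prime $q$ with $m < q \le 2m$, hence $q \le \sqrt{n}-1$ (ensuring $q^2+q+1 \le n$) and $q > (\sqrt{n}-3)/2$. A routine estimate then gives $\tfrac{1}{2}q(q+1)^2 \ge \tfrac{1}{2}q^3 \ge (\sqrt{n}-3)^3/16$, which exceeds $n^{3/2}/20$ for all sufficiently large $n$.

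Finally, the remaining finite range of small values of $n$ is handled by a separate direct construction: the star $K_{1,n-1}$ is trivially four-cycle free with $n-1$ edges, and the elementary inequality $n-1 \ge n^{3/2}/20$ holds comfortably up to $n$ of order several hundred, covering the residual range left open by the asymptotic Bertrand argument. There is no genuine obstacle here; the generous slack in the constant $\tfrac{1}{20}$ makes the numerical checks routine, and the only mildly delicate point is choosing the Bertrand interval so that the resulting prime lies in the correct window $[c\sqrt{n},\sqrt{n}-1]$.
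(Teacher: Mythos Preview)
Your proposal follows exactly the route the paper indicates: the paper states the lemma without proof, merely recalling the Erd\H os--R\'enyi polarity graph and Bertrand's postulate and leaving the combination (with padding by isolated vertices) to the reader. One small caveat on your numerics: with the crude bound $\tfrac12 q(q+1)^2 \ge (\sqrt n - 3)^3/16$, the Bertrand estimate only exceeds $n^{3/2}/20$ from about $n \gtrsim 1750$, whereas the star $K_{1,n-1}$ satisfies $n-1 \ge n^{3/2}/20$ only up to $n \approx 397$, so the two ranges as written do not overlap; this is easily repaired by using the less wasteful bound $q+1 > (\sqrt n -1)/2$ in $\tfrac12 q(q+1)^2$, or by listing a handful of primes for the intermediate range, and as you say the slack in $\tfrac1{20}$ makes this routine.
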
\vspace{-0.5cm}
\qquad\hspace*{\fill}$\Box$

\vspace{.3cm}

We may now state examples of oriented graphs with $bias(D)<Ke^{2}/n^{2}$ which do not contain an oriented four-cycle.  Our first examples are obtained by considering random orientations of four-cycle free simple graphs.  By the above lemma, there exists, for each $n$, a simple graph $G$ on $n$ vertices which is four-cycle free and has $e(G)\ge n^{3/2}/20$.  Let $D$ be obtained by orienting the edges of $G$ at random.  Then $D$ certainly cannot contain an oriented four-cycle and, by Lemma \ref{random}, with positive probability  $bias(D)\le K_{1/2}n\le 400K_{1/2}e(D)^{2}/n^{2}$.  In particular this gives us for all $n$ an oriented graph on $n$ vertices which does not contain an oriented four-cycle and for which $bias(D)\le 400K_{1/2}e(D)^{2}/n^{2}$.

Our more general class of examples is obtained by considering blow-ups of the above examples.  For a fixed constant $K$ (in fact we take $K=6400 K_{0.9}$), we define for each pair of natural numbers $m$ and $l$ an oriented graph $D_{m,l}$ which contains no oriented four-cycle and has $bias(D)\le K e(D)^{2}/n^{2}$.  The number of vertices of $D$ will be $n=ml$, while $e(D)$, the number of arcs of $D$, will be of the order $m^{3/2}l^{2}$.  

\noindent Fix a pair of natural numbers $m$ and $l$.  Let $G$ be a four-cycle free simple graph on $m$ vertices with at least $m^{3/2}/20$ edges. Let $D'$ be an oriented graph obtained by orienting $G$ and such that $bias_{0.9}(D')<K_{0.9}m$, the existence of such an oriented graph being assured by Lemma \ref{random}.  Let $D=D_{m,l}$ be obtained as an $l$-blow-up of $D'$.  We now have, by Lemma \ref{blowup}, that $bias(D)< 16K_{0.9}ml^{2}$.  It is easily observed that $n=ml$ and $e\ge m^{3/2}l^{2}/20$, so that $ml^{2}<400 e(D)^{2}/n^{2}$.  Thus, $bias(D)<6400K_{0.9}e(D)^{2}/n^{2}$, and, by inspection, $D$ does not contain any oriented four-cycle.

As a demonstration of the generality of our class $(D_{m,l})_{m,l\in\mathbb{N}}$ of examples, note that for any pair $n_{0},e_{0}$ with $e_{0}\ge n_{0}^{3/2}$, there is a choice of $m$ and $l$ such that $D_{m,l}$ has approximately $n_{0}$ vertices and approximately $e_{0}$ arcs.  Simply choose $m$ to be an integer close to $n_{0}^{4}/400e_{0}^{2}$, choose a four-cycle free graph $G$ with close to
$m^{3/2}/20$ edges, and choose $l$ to be close to $400e_{0}^{2}/n_{0}^{3}$.

\section{The case $D$ is dense}\label{densesec} 

In this section we prove Theorem \ref{dense}.  In fact we shall prove a more general result, Proposition \ref{denseprop}, which also counts homomorphic copies of partially oriented graphs.  A partially oriented graph $H$ is a graph which may have some of its edges oriented.  We write $\vec{e}(H)$ for the number of edges of $H$ that are oriented, e.g. if $H$ is a simple graph then $\vec{e}(H)=0$ and if $H$ is an oriented graph then $\vec{e}(H)=e(H)$.  We also introduce the notation $\bar{e}(A,B)$ for the total number of edges (whatever their orientation) between $A$ and $B$.  Note that if $D$ is an oriented graph with $bias(D)<\ep n^{2}$, then in particular the following holds in $D$:
\begin{equation}\label{epgives} e(B,A)\ge \frac{\bar{e}(A,B)}{3} - \frac{\ep}{3} n^{2}\qquad \text{for all}\, A,B\ssq V\, .\end{equation}
We now turn to Proposition \ref{denseprop}.  This proposition clearly implies Theorem \ref{dense}.

\begin{proposition}\label{denseprop} Let $D$ be an oriented graph on $n$ vertices satisfying (\ref{epgives}).  Let $H$ be a partially oriented graph on $k$ vertices.  Then \begin{equation*} hom(H,D)\ge \frac{hom(\bar{H},D)}{3^{\vec{e}(H)}} - (1-3^{-\vec{e}(H)})\frac{\ep}{2} n^{k}\, .\end{equation*}\end{proposition}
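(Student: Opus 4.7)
The plan is to prove Proposition \ref{denseprop} by induction on $\vec{e}(H)$, peeling off one oriented edge at a time.  The base case $\vec{e}(H)=0$ is trivial since then $H=\bar{H}$ as a partially oriented graph, and both sides of the claimed inequality reduce to $hom(\bar{H},D)$.  For the inductive step I will pick an oriented edge $\vec{xy}$ of $H$ and form $H'$ by un-orienting it (so $\vec{e}(H')=\vec{e}(H)-1$ and the underlying graph of $H'$ is still $\bar{H}$).  The goal is to establish the one-step inequality
\[
hom(H,D) \ge \frac{hom(H',D)}{3} - \frac{\ep}{3}n^k,
\]
which combines with the inductive hypothesis to yield the stated bound: the geometric series $\sum_{i=0}^{\vec{e}(H)-1} 3^{-i} = \frac{3}{2}(1-3^{-\vec{e}(H)})$ converts the per-step error $\ep/3$ into the required total error $(1-3^{-\vec{e}(H)})\ep/2$.

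To prove the one-step inequality, I will parameterise homomorphisms by the partial function $\psi:V(H)\setminus\{x,y\}\to V(D)$.  For each such $\psi$, let $A_\psi$ (respectively $B_\psi$) be the set of vertices $v\in V(D)$ such that extending $\psi$ by $\phi(x):=v$ (respectively $\phi(y):=v$) is consistent with every edge-constraint of $H$ incident to $x$ (respectively $y$) other than $\{x,y\}$ itself.  Since $H$ and $H'$ agree on all edges other than $\{x,y\}$, these sets are the same whether they are computed from $H$ or from $H'$.  Because $D$ is oriented and therefore has no $2$-cycles, a short bookkeeping check gives
\[
hom(H',D) = \sum_\psi \bar{e}(A_\psi,B_\psi) \quad\text{and}\quad hom(H,D) = \sum_\psi e(A_\psi,B_\psi),
\]
where $\psi$ ranges over all $n^{k-2}$ functions $V(H)\setminus\{x,y\}\to V(D)$.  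Applying the hypothesis (\ref{epgives}) to the pair $(B_\psi,A_\psi)$, together with the symmetry $\bar{e}(B_\psi,A_\psi)=\bar{e}(A_\psi,B_\psi)$, gives $e(A_\psi,B_\psi) \ge \bar{e}(A_\psi,B_\psi)/3 - (\ep/3)n^2$ for every $\psi$.  Summing over $\psi$ yields the one-step inequality.

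The only point that really requires care is the identity $hom(H',D)=\sum_\psi \bar{e}(A_\psi,B_\psi)$: this is exactly where the orientedness of $D$ enters, since it ensures that the two possible orientations of the edge $\{x,y\}$ in $D$ give disjoint contributions, so that the ``total number of edges of $D$ between $A_\psi$ and $B_\psi$'' agrees with the number of valid extensions of $\psi$ to a homomorphism of $H'$.  Once this bookkeeping is recorded, the rest of the argument is a clean induction using only~(\ref{epgives}), so I do not anticipate any further obstacle.
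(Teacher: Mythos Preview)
Your proposal is correct and is essentially the same argument as the paper's: induction on $\vec{e}(H)$, unorienting one arc at a time, parameterising homomorphisms by the images of the remaining $k-2$ vertices, identifying the counts with $e(A,B)$ and $\bar{e}(A,B)$ for suitable sets, and applying~(\ref{epgives}) termwise before summing. Your explicit note that orientedness of $D$ is what makes $hom(H',D)=\sum_\psi \bar{e}(A_\psi,B_\psi)$ hold, and your geometric-series bookkeeping for the accumulated error, match the paper's computation exactly.
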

 
  \begin{remark}\rm
    To prove that a dense oriented graph with bias at most $\epsilon n^2$ contains $H$ as a subgraph, one may use Szemer\'edi Regularity Lemma (cf proof in Appendix of the arxiv version of this article (arXiv:0911.3969)). However we believe that the use of Proposition \ref{denseprop} with the following direct proof is simpler and illustrates better the use of the bias parameter. Furthermore it gives better constants.
  \end{remark}

\begin{proof} We prove the proposition by induction on $\vec{e}(H)$.  If $\vec{e}(H)=0$, then $H=\bar{H}$, and so $hom(H,D)=hom(\bar{H},D)$.  For the general case, let $H$ be an oriented graph on $\{1,\dots,k\}$ with $\vec{e}(H)\ge 1$.  By relabelling if necessary (which does not affect the homomorphism count) we may assume that $\vec{12}$ is an arc (oriented edge) of $H$.  Let $H'$ be the partially oriented graph obtained by unorienting this edge.  We now relate the quantities $hom(H',D)$ and $hom(H,D)$.  For each $(x_{3},\dots,x_{k})\in V^{k-2}$, let $Hom(H',D;\, .\, ,\, .\, ,x_{3},\dots,x_{k})$ denote the set of homomorphisms $\phi$ of $H'$ into $D$ for which $\phi(i)=x_{i}$ for all $i=3,\dots,k$.  Similarly define $Hom(H,D;\, .\, ,\, .\, ,x_{3},\dots,x_{k})$.  In fact, it is easy to characterise the homomorphisms $\phi \in Hom(H',D;\, .\, ,\, .\, ,x_{3},\dots,x_{k})$.  A homomorphism $\phi \in Hom(H',D;\, .\, ,\, .\, ,x_{3},\dots,x_{k})$ must have $\phi(i)=x_{i}$ for $i= 3,\dots,k$, and must pick values for $\phi(1)$ and $\phi(2)$.  Writing $x_{1}$ for $\phi(1)$, we know $x_{1}$ must join up appropriately to the vertices $x_{3},\dots,x_{k}$.  Specifically

\begin{tabular}{c p{1cm} p{10cm}} 

$(i)$ & & $\vec{x_{1}x_{i}}$ is an arc of $D$, for every arc $\vec{1i}:i\ge 3$ in $H$.
\\
$(ii)$ & & $\vec{x_{i}x_{1}}$ is an arc of $D$, for every arc $\vec{i1}:i\ge 3$ in $H$.
\\
$(iii)$ & & $x_{1}x_{i}$ is an edge of $\bar{D}$, for every edge $1i:i\ge 3$ in $\bar{H}$.

\end{tabular}

Equivalently, $x_{1}\in \bigcap_{i\ge 3:\vec{1i}\in E(H)}\Gamma^{-}(x_{i})\cap \bigcap_{i\ge 3:\vec{i1}\in E(H)}\Gamma^{+}(x_{i})\cap \bigcap_{i\ge 3:1i\in E(\bar{H})}\Gamma(x_{i})$.  We denote this set $A$.  Similarly, writing $x_{2}$ for $\phi(2)$, there are similar restrictions on $x_{2}$, which again are equivalent to demanding that $x_{2}$ belongs to a certain set, we denote this set $B$.  Since $H'$ has an unoriented edge between $1$ and $2$, we have a final condition - the condition that $x_{1}x_{2}$ is an edge of $\bar{D}$.  Hence for certain sets $A$ and $B$, we have a one-to-one correspondence between homomorphisms $\phi\in Hom(H',D;\, .\, ,\, .\, ,x_{3},\dots,x_{k})$ and edges of $\bar{D}$ between $A$ and $B$.

Similarly, we may characterise the homomorphisms $\phi\in Hom(H,D;\, .\, ,\, .\, ,x_{3},\dots,x_{k})$.  Again we write $x_{1}$ and $x_{2}$ for $\phi(1)$ and $\phi(2)$. The restrictions $x_{1}\in A$ and $x_{2}\in B$ remain.  However, on this occasion we require not only that there is some edge between $x_{1}$ and $x_{2}$, but that there is an oriented edge from $x_{1}$ to $x_{2}$.  Thus, there is a one-to-one correspondence between homomorphisms $\phi\in Hom(H,D;\, .\, ,\, .\, ,x_{3},\dots,x_{k})$ and edges from $A$ to $B$.

Thus, $|Hom(H',D;\, .\, ,\, .\, ,x_{3},\dots,x_{k})|$ and $|Hom(H,D;\, .\, ,\, .\, ,x_{3},\dots,x_{k})|$ are $\bar{e}(A,B)$ and $e(A,B)$ respectively, for some pair of subsets $A,B\ssq V$.  From our condition (\ref{epgives}), we obtain that \begin{equation*} |Hom(H,D;\, .\, ,\, .\, ,x_{3},\dots,x_{k})|\ge \frac{|Hom(H',D;\, .\, ,\, .\, ,x_{3},\dots,x_{k})|}{3} - \frac{\ep}{3} n^{2}\, .\end{equation*} Since $hom(H,D)$ is the sum over $(x_{3},\dots,x_{k})\in V^{k-2}$ of $|Hom(H,D;\, .\, ,\, .\, ,x_{3},\dots,x_{k})|$, and similarly $hom(H',D)$, we have that
\begin{equation*} hom(H,D)\ge  \frac{hom(H',D)}{3}- \frac{\ep}{3} n^{k}\, .\end{equation*} 
Having obtained this relation between $hom(H,D)$ and $hom(H',D)$, we require only an application of the induction hypothesis.  As $\vec{e}(H')=\vec{e}(H)-1$, an application of the induction hypothesis to $H'$ gives $hom(H',D)\ge hom(\bar{H},D)/3^{\vec{e}(H)-1}-(1-3^{1-\vec{e}(H)})\frac{\ep}{2} n^{k}$.  Combining this with the inequality proved above
\begin{equation*} hom(H,D)\ge  \frac{hom(\bar{H},D)}{3^{\vec{e}(H)}}-\frac{1}{3}\Big(1-3^{1-\vec{e}(H)}\Big)\frac{\ep}{2}n^{k}-\frac{\ep}{3} n^{k}= \frac{hom(\bar{H},D)}{3^{\vec{e}(H)}} - (1-3^{-\vec{e}(H)})\frac{\ep}{2} n^{k} \, .\end{equation*}
\end{proof}

\section{The extremal problem concerning $bias(D)$}\label{regran}

The main focus of the present article is the question of what structural information on an oriented graph $D$ can be obtained from the knowledge that a certain oriented graph $H$ is not a subgraph of $D$.  More precisely, our results relate to the question:
\begin{quote}What is the minimum value of $bias(D)$ over $H$-free oriented graphs $D$ with 
$n$ vertices and $e$ arcs?\end{quote}  
In this section we consider the related extremal question: 
\begin{quote}What is the minimum value of $bias(D)$ over oriented graphs on $n$ (non-isolated) 
vertices?\end{quote}  
It will be seen from the results we obtain that this is indeed the most natural form of the extremal question.

The definition of $bias(D)$ contains the constant $\frac{1}{2}$.  As commented in the introduction, this choice is rather arbitrary, and all our results hold (up to a change of constant) if $\frac{1}{2}$ is replaced by some other constant $\eta\in(0,1)$.  The same is true for the results of this section, but since it is not trivial to deduce the bounds on $bias_{\eta}$ from those for $bias=bias_{1/2}$, we shall prove our bounds for $bias_{\eta}$, $\eta\in (0,1)$.  We shall state our lower bounds as lower bounds on the quantity $ow(D):=\max\{e(A,B):A,B\subset V(D),\, e(B,A)=0\}$, 
the size of the largest one-way subgraph in $D$.  Since, trivially, $bias_{\eta}(D)\ge ow(D)$,
 this provides a lower bound on $bias_{\eta}(D),\, \eta\in (0,1)$.  Our results are as follows.

\begin{theorem}\label{lb} Every oriented graph $D$ on $n$ (non-isolated) vertices has $ow(D)\ge n/9\lceil \log_{2}{n}\rceil$.\end{theorem}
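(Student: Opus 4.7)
The plan is to split according to the maximum degree in $D$ and handle each case with a direct construction. Throughout, set $L=\lceil\log_2 n\rceil$.

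The easy case comes first. If some vertex $v$ satisfies $d^+(v)\ge n/(9L)$, take $A=\{v\}$ and $B=\Gamma^+(v)$: because $D$ is oriented, no arc can return from $\Gamma^+(v)$ to $v$, so $e(B,A)=0$, and $e(A,B)=d^+(v)\ge n/(9L)$, giving the claim. The case $d^-(v)\ge n/(9L)$ is symmetric, using $A=\Gamma^-(v)$ and $B=\{v\}$. Hence I may assume that all in- and out-degrees are strictly less than $n/(9L)$.

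In the remaining low-degree regime I would construct $(A,B)$ greedily. Start from an arbitrary arc $\vec{x_0 y_0}$ and set $A=\{x_0\}$, $B=\{y_0\}$; iteratively enlarge $A\cup B$ while preserving the invariant $e(B,A)=0$. A vertex $v\notin A\cup B$ may be added to $A$ exactly when $\Gamma^-(v)\cap B=\emptyset$, and to $B$ exactly when $\Gamma^+(v)\cap A=\emptyset$, so $v$ is \emph{blocked} only if both conditions fail simultaneously. Under the degree hypothesis each vertex of $A\cup B$ contributes at most $n/(9L)$ blocking conditions, so at most $(|A|+|B|)\cdot n/(9L)$ vertices are ever blocked. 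As long as $|A|+|B|$ remains small compared to $9L$, most vertices of $V$ stay addable, and at each step one can pick a vertex whose addition actually increases $e(A,B)$.

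The delicate point, and where the constant $9$ is pinned down, is to guarantee that enough of the added vertices truly contribute an arc to $e(A,B)$ (rather than being added ``for free'' with no out-neighbour in $B$ or no in-neighbour from $A$). Here one uses that every vertex of $D$ is non-isolated, giving $e(D)\ge n/2$, together with the low-degree bound to rule out concentration of arcs on a small vertex set. A careful accounting of arcs captured versus vertices blocked through the greedy process then yields $e(A,B)\ge n/(9L)$. The algorithmic flavour of this argument matches the ``algorithmic proof in the regular case'' alluded to just after the statement; a purely probabilistic alternative would assign each vertex an independent random label in $\{0,1\}^L$ and split according to the most significant differing bit of each arc, but turning that bipartite piece into a strictly one-way pair still requires the same low-degree pruning input, so the greedy approach seems cleaner.
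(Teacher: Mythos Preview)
Your high-degree case is fine, but the low-degree greedy argument has a genuine gap. The blocking count $(|A|+|B|)\cdot n/(9L)$ uses the \emph{worst-case} degree bound $n/(9L)$, and on that basis you only run the process while $|A|+|B|=O(L)$. Nothing in the sketch then forces $e(A,B)$ to be anywhere near the target. The sentence ``at each step one can pick a vertex whose addition actually increases $e(A,B)$'' already fails on the directed $n$-cycle: after placing a single arc $\vec{x_0y_0}$ there is no productive addition at all without first making non-productive ones, so the promised ``careful accounting'' is exactly where the entire proof would have to live, and you have not supplied it. More seriously, when the true degrees are far below $n/(9L)$ --- again the directed cycle, all degrees equal to $1$ --- each added vertex can contribute at most one arc, so you would need $|A|+|B|$ of order $n/(9L)$ rather than $O(L)$; your analysis, calibrated only to the threshold $n/(9L)$, never addresses this regime. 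The two ends (degrees near $n/(9L)$ forcing $|A|+|B|$ small; degrees near $1$ forcing $|A|+|B|$ huge) require incompatible strategies, and a single degree-oblivious greedy pass cannot serve both.

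The missing idea is precisely what the paper uses: bucket the vertices into the $O(L)$ dyadic degree classes $[n/2^{i},n/2^{i-1})$, pick a class containing at least $n/(2L)$ vertices, and run a random selection of $A$ with inclusion probability $p$ tuned to \emph{that} class's degree scale (so that $p\,d(y)\in[\tfrac13,\tfrac23]$ there). Each of those $\ge n/(2L)$ vertices then contributes $\ge 2/9$ in expectation to $e(A,B(A))$, and the bound drops out. Your single-threshold split throws away this degree-scale information, which is why the argument cannot be closed as written.
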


\begin{remark}\rm This result is not difficult to prove, and we remark that it may be easily 
deduced from Lemma 2 of \cite{FLS} (in fact the tight version of that lemma, 
whose proof is sketched after the proof of Lemma 2). \end{remark}

\begin{theorem}\label{biasex} For each $\eta\in (0,1)$ there is a constant $K_{\eta}$ such that, for every $n$, there is an oriented graph $D$ on $n$ (non-isolated) vertices with $bias_{\eta}(D)\le K_{\eta}n/\log_{2}{n}$.\end{theorem}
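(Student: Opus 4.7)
The plan is to use a probabilistic construction. Fix a constant $c_\eta > 0$ to be determined, set $d = \lceil c_\eta n/\log_2 n\rceil$, and let $G$ be a $d$-regular graph on $n$ vertices with good pseudo-random properties (for concreteness, a uniformly random $d$-regular graph). Form $D$ by orienting each edge of $G$ independently and uniformly at random. Since every vertex has underlying degree $d \ge 1$, no vertex of $D$ is isolated.

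Set the target threshold $t := K_\eta n/\log_2 n$, where $K_\eta$ is a large constant to be chosen. For a fixed pair $(A, X)$, let $m = \bar{e}_G(A, X)$ denote the number of edges of $G$ between $A$ and $X$. Chernoff's inequality, in exactly the form used in the proof of Lemma~\ref{random}, gives
\[
\Pr[\, e(X, A) \le \eta \, e(A, X) \mid m\,] \le \exp(-c(\eta)\,m),
\]
for a constant $c(\eta) > 0$. Since the pair $(A, X)$ can witness $bias_\eta(D) > t$ only when $e(A, X) > t$, which forces $m \ge t$, the probability that $(A, X)$ is ``bad'' is at most $\exp(-c(\eta)\,m) \cdot \mathbf{1}\{m \ge t\}$. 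The heart of the argument is to union-bound this quantity over all pairs $(A, X)$.

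The naive union bound over $4^n$ pairs fails, so we refine it in two stages. First, since $d^+(v) \sim \mathrm{Bin}(d, 1/2)$ for each vertex $v$, a Chernoff bound plus a union bound over vertices gives $\max_v \max(d^+(v), d^-(v)) \le t$ with high probability, provided $c_\eta$ is chosen smaller than $K_\eta$. This bound yields $e(A, X) \le |A| \cdot t$ and (symmetrically) $e(A, X) \le |X| \cdot t$, and so handles all pairs with $\min(|A|, |X|)$ bounded by a fixed constant $C_0$ (after absorbing $C_0$ into $K_\eta$). Second, for pairs where both $|A|$ and $|X|$ exceed some threshold of order $\log_2 n$, pseudo-randomness of $G$ guarantees $m(A, X) \asymp d\,|A|\,|X|/n$, and the factor $\exp(-c(\eta)\,m)$ decays fast enough to absorb the $\binom{n}{|A|}\binom{n}{|X|}$ count of pairs of this shape. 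The main technical obstacle is the intermediate range in which $\min(|A|, |X|)$ is larger than any fixed constant but much smaller than $\log_2 n$: here one has to restrict attention to ``canonical'' $X$'s (for instance of the form $X \subseteq \Gamma^+(A)$ after revealing the orientation) or apply a sharper Chernoff bound exploiting the joint concentration of $\sum_{v \in A} d^+(v)$ around $|A|\,d/2$, and then choose $c_\eta$ and $K_\eta$ so that the total contribution over all scales in this range is less than one.
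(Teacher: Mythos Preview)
Your construction cannot work, for a reason that the paper itself identifies explicitly. Proposition~\ref{realreg} states that every oriented graph $D$ satisfies $ow(D)\ge e(D)/4\Delta^{+}$. If the underlying graph $G$ is $d$-regular then $e(D)=nd/2$ and $\Delta^{+}\le d$, so $ow(D)\ge n/8$ \emph{deterministically}, for every orientation of $G$ and every value of $d$. Since $bias_{\eta}(D)\ge ow(D)$, a random orientation of a random $d$-regular graph always has $bias_{\eta}(D)\ge n/8$, which is $\Theta(n)$ and not $O(n/\log n)$. No amount of pseudo-randomness of $G$ or cleverness in the union bound can rescue this; the witnessing pair is obtained by taking $A$ a random subset of density about $1/d$ (so $|A|\approx \log_{2}n$) and $B=B(A)=\{v:e(\{v\},A)=0\}$, which is a set of order $n$. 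This is precisely the ``intermediate range'' you flagged as the technical obstacle, and it is not a technicality: a one-way subgraph of size $\Omega(n)$ genuinely lives there.

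The paper's construction avoids this by being highly \emph{inhomogeneous}. The vertex set is partitioned into $l\approx\log_{2}n$ classes $V_{1},\dots,V_{l}$ of size $r\approx n/\log_{2}n$, with edge probability $2^{1-i-j}$ between $V_{i}$ and $V_{j}$; degrees therefore range over many scales. The union bound is then not over all $4^{n}$ pairs $(A,B)$ but over a family $\mathcal{R}$ of ``rounded'' pairs $(R,R^{*})$ of size only $\exp(O(r))=\exp(O(n/\log n))$, built so that every $A$ is sandwiched $R\subseteq A\subseteq R^{*}$ with $R$ and $R^{*}$ differing only in the low-probability tail classes. The exponential variation in degrees is exactly what allows this small covering family to exist and is what defeats the averaging argument behind Proposition~\ref{realreg}.
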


\begin{remark}\rm This result is proved by considering a random orientation of an 
inhomogeneous random graph.  The resulting graphs typically contain $\Theta(n^{2}/(\log{n})^{2})$ edges.  Variants of this example could be produced with $\Theta(n^{\alpha})$ edges, for any $\alpha\in (1,2)$ (although the constant $K_\eta$ would be dependent on $\alpha$).\end{remark}

One surprising facet of our results is that a much larger one-way subgraph can be found in the case that $D$ is out-regular (all out-degrees equal), irrespective of the degree: the lower bound $n/4$ follows immediately from the following proposition.

\begin{proposition}\label{realreg} Let $D$ be an oriented graph with maximum out-degree $\Delta^{+}$, then $ow(D)\ge e(D)/4\Delta^{+}$.\end{proposition}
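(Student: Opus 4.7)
The plan is to establish the bound by a short probabilistic argument. I will sample $A\ssq V(D)$ by including each vertex independently with probability $p := 1/(\Delta^{+}+1)$, and then set $B := V(D)\setminus \gm(A)$, so that $B$ is exactly the set of vertices that have no out-arc into $A$. By the very definition of $B$, every arc leaving $B$ lands outside $A$, and hence $e(B,A)=0$ automatically. Thus $(A,B)$ is always a legitimate one-way subgraph and $ow(D)\ge e(A,B)$ for every realization of $A$.

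The heart of the proof is the computation of $\mathbb{E}[e(A,B)]$. Fix an arc $\vec{uv}\in E(D)$. It contributes to $e(A,B)$ precisely when $u\in A$ and $\gp(v)\cap A=\emptyset$. Because $D$ is oriented, $\vec{uv}\in E(D)$ forces $u\notin\gp(v)$ (and trivially $v\notin\gp(v)$), so the two events ``$u\in A$'' and ``$\gp(v)\cap A=\emptyset$'' are determined by disjoint vertex sets and are therefore independent. The arc contributes with probability $p(1-p)^{d^{+}(v)}\ge p(1-p)^{\Delta^{+}}$, and summing over all arcs gives
\[ \mathbb{E}\bigl[e(A,B)\bigr]\;\ge\; p(1-p)^{\Delta^{+}}\,e(D). \]

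It then remains to verify that $p(1-p)^{\Delta^{+}}\ge 1/(4\Delta^{+})$ at $p=1/(\Delta^{+}+1)$, i.e., that $4\Delta^{+}\bigl(\Delta^{+}/(\Delta^{+}+1)\bigr)^{\Delta^{+}}\ge \Delta^{+}+1$. Using the standard inequality $(1-1/(n+1))^{n}\ge 1/e$ with $n=\Delta^{+}$ handles all $\Delta^{+}\ge 3$ at once, since then $4\Delta^{+}/e\ge \Delta^{+}+1$. The two remaining cases $\Delta^{+}=1,2$ are dealt with by direct computation: the left-hand side equals $1/4$ and $4/27$, while $1/(4\Delta^{+})$ equals $1/4$ and $1/8$, respectively. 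Picking a realization of $A$ achieving the expectation then yields $ow(D)\ge e(D)/(4\Delta^{+})$.

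The only delicate point in the plan is this last numerical step: the inequality $p(1-p)^{\Delta^{+}}\ge 1/(4\Delta^{+})$ is actually tight at $\Delta^{+}=1$ (both sides equal $1/4$), so the constant $4$ is the right one for this random scheme and the small-$\Delta^{+}$ cases really do need to be checked by hand rather than absorbed into a general asymptotic estimate.
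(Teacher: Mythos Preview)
Your argument is correct and follows essentially the same probabilistic approach as the paper: pick $A$ at random, set $B=B(A)$ to be the vertices with no out-arc into $A$, and compute $\mathbb{E}[e(A,B)]$ arc by arc using independence (which, as you note, relies on $u\notin\Gamma^{+}(v)$ since $D$ is oriented). The only difference is cosmetic: the paper takes $p=1/(2\Delta^{+})$ and uses the linear bound $(1-p)^{\Delta^{+}}\ge 1-p\Delta^{+}$, which gives $e(D)p(1-p\Delta^{+})=e(D)/(4\Delta^{+})$ directly without any case analysis, whereas your choice $p=1/(\Delta^{+}+1)$ requires the small separate checks at $\Delta^{+}=1,2$.
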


For oriented graphs that are close to out-regular, in the sense that $\Delta^{+}$ is at most a constant multiple of the average out-degree, the above bound is of the same order as the upper bound (Lemma \ref{random}) on $bias_{\eta}(D)$ for $D$ whose orientation is random.  Simple applications of Chernoff bounds \cite{Ch} (see Lemma \ref{Chernoff}) imply that a random orientation of the Erd\H os-R\' enyi random graph $G(n,p)$ (with $p=\omega(\log{n}/n)$) will be close to out-regular with high probability.  From which we may deduce.
 
\begin{corollary} Let $D$ be a random orientation of the Erd\H os-R\' enyi random graph $G(n,p)$, with $p=\omega(\log{n}/n)$.  Then $ow(D)=\Theta(n)$ with high probability.\end{corollary}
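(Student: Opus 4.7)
The plan is to prove $ow(D)=\Theta(n)$ by establishing matching $O(n)$ upper and $\Omega(n)$ lower bounds separately, each holding with high probability (whp).

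For the upper bound, I would first observe that $ow(D)\le bias_{\eta}(D)$ for any $\eta\in(0,1)$, since the defining condition $e(B,A)=0$ trivially implies $e(B,A)\le \eta\,e(A,B)$. Thus, conditioning on an arbitrary realization of $G\sim G(n,p)$, Lemma \ref{random} applied with (say) $\eta=1/2$ gives $bias_{1/2}(D)<K_{1/2}\,n$ whp over the random orientation, and hence $ow(D)=O(n)$ whp jointly over the graph and its orientation. Note that the upper bound in fact holds for \emph{any} underlying graph, so the assumption $p=\omega(\log n/n)$ is not needed here.

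For the lower bound, the idea is to apply Proposition \ref{realreg}, which gives $ow(D)\ge e(D)/4\Delta^{+}(D)$. Two applications of Chernoff's inequality (Lemma \ref{Chernoff}) suffice to control the right-hand side. First, since $p=\omega(\log n/n)$, a Chernoff estimate combined with a union bound over the $n$ vertices shows that every vertex of $G(n,p)$ has degree $(1+o(1))np$ whp; in particular $e(D)=\Theta(n^{2}p)$ whp. Second, conditional on this event, a Chernoff bound applied to the random $\pm 1$ orientation of each edge shows that each out-degree $d^{+}(v)$ concentrates around $np/2$, so $\Delta^{+}(D)=\Theta(np)$ whp. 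Substituting into Proposition \ref{realreg} yields $ow(D)\ge e(D)/4\Delta^{+}(D)=\Omega(n)$, as required.

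Combining the two directions gives $ow(D)=\Theta(n)$ whp. No step here is genuinely difficult; the only delicate point is that the hypothesis $p=\omega(\log n/n)$ is precisely the threshold needed so that, after union-bounding over the $n$ vertices, each degree deviates from its mean by $o(np)$. For $p$ of smaller order this concentration fails, $\Delta^{+}(D)$ can substantially exceed the average out-degree, and Proposition \ref{realreg} no longer yields the linear lower bound.
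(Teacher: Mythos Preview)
Your proposal is correct and follows essentially the same approach as the paper: the upper bound comes from $ow(D)\le bias_{\eta}(D)$ together with Lemma~\ref{random}, and the lower bound from Proposition~\ref{realreg} once Chernoff bounds establish that $D$ is close to out-regular when $p=\omega(\log n/n)$. The paper gives only a one-sentence sketch of this deduction, and your write-up fills in exactly the details it omits.
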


We also note the following cute lower bound, which follows from Lemma \ref{realreg} and the trivial lower bound $ow(D)\ge \Delta^{+}$.

\begin{proposition}\label{sqrt} Let $D$ be an oriented graph, then $ow(D)\ge \sqrt{e(D)}/2$.\end{proposition}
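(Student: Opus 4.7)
The plan is to combine the two lower bounds mentioned in the hint and perform a simple case split (equivalently, a geometric-mean argument) on the maximum out-degree $\Delta^{+}$ of $D$.

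First, I would record the \emph{trivial lower bound} $ow(D)\ge \Delta^{+}$. To prove this, pick a vertex $v\in V(D)$ with $d^{+}(v)=\Delta^{+}$, and set $A=\{v\}$ and $B=\Gamma^{+}(v)$. Then $e(A,B)=\Delta^{+}$, and since $D$ is an \emph{oriented} graph (at most one arc between any pair of vertices), no arc from $B$ to $v$ exists, so $e(B,A)=0$. Hence $(A,B)$ is a one-way subgraph of size $\Delta^{+}$, giving $ow(D)\ge \Delta^{+}$.

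Next, invoke Proposition~\ref{realreg}, which states $ow(D)\ge e(D)/4\Delta^{+}$. Combining the two inequalities, we have
\begin{equation*}
ow(D)\ge \max\Bigl\{\Delta^{+},\,\frac{e(D)}{4\Delta^{+}}\Bigr\}\ge \sqrt{\Delta^{+}\cdot \frac{e(D)}{4\Delta^{+}}}=\frac{\sqrt{e(D)}}{2},
\end{equation*}
where the second inequality uses the fact that the maximum of two positive quantities is at least their geometric mean. Equivalently, a direct case split works: if $\Delta^{+}\ge \sqrt{e(D)}/2$, the trivial bound already gives the result, and otherwise the bound from Proposition~\ref{realreg} gives $ow(D)> e(D)/(4\cdot \sqrt{e(D)}/2)=\sqrt{e(D)}/2$.

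There is essentially no obstacle here: the proposition is a corollary obtained by optimising over $\Delta^{+}$ the two available lower bounds on $ow(D)$. The only subtlety worth stating explicitly is that the trivial bound $ow(D)\ge \Delta^{+}$ uses that $D$ is oriented (so an out-neighbourhood sends no arcs back to its source vertex), which is a standing assumption throughout the paper.
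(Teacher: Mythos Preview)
Your proof is correct and follows exactly the route indicated in the paper: combine the trivial bound $ow(D)\ge \Delta^{+}$ with Proposition~\ref{realreg} and take the geometric mean (or do the equivalent case split on~$\Delta^{+}$). Your explicit verification of the trivial bound, noting that $D$ being oriented forces $e(\Gamma^{+}(v),\{v\})=0$, is a nice touch that the paper leaves implicit.
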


Since there are many results in this section, we shall split into subsections.  
In subsection \ref{lbs} we prove the lower bounds on $ow(D)$, Theorem \ref{lb} and Proposition \ref{realreg}.  
In subsection \ref{biasexsec} we prove Theorem \ref{biasex}.  
We finish the section with an algorithmic proof of the lower bound $ow(D)\ge n/4$ 
for regular oriented graphs $D$.

We also draw here the reader's attention to the article of Brown, Erd\H os and Simonovits \cite{BES73}, 
which considers extremal problems for directed graphs, where parallel arcs in opposite directions are allowed.

\medskip

Let us state here, the form of Chernoff's Inequality \cite{Ch} that we shall use throughout the section.

\begin{lemma}[Chernoff bound for the sum of Poisson
trials]\label{Chernoff} Let $\beta\in [0,1]$ and let $S_{N}=X_{1}+...+X_{N}$ be a sum of independent
random variables $X_{i}$ taking values in $\{0,1\}$, with
$\mathbb{P}(X_{i}=1)=p_{i}$ for each $i=1,...,N$, so that the expectation of
$S_{N}$ is $\mu=\sum_{i}p_{i}$.  Then, for all $\lambda\ge \mu$, \begin{equation*} \mathbb{P}(S_{N}\ge
(1+\beta) \lambda)\le \exp\Big(\frac{-\beta^{2}\lambda}{3}\Big)\, .\end{equation*} In the other direction, for all $\lambda\le \mu$, \begin{equation*} \mathbb{P}(S_{N}\le (1-\beta)\lambda)\le \exp\Big(\frac{-\beta^{2}\lambda}{2}\Big)\, .\end{equation*}  \end{lemma}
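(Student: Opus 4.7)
My plan is to prove both bounds by the standard exponential moment (Bernstein) method, with the only mild twist being that the inequalities are stated in terms of $\lambda$ rather than $\mu$.

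First I would record the single-variable bound $\mathbb{E}[e^{tX_i}] = 1 + p_i(e^t-1) \le \exp(p_i(e^t-1))$ (valid for all real $t$, using $1+x\le e^x$). Taking the product over the independent $X_i$'s gives the key moment generating function estimate
\begin{equation*}
\mathbb{E}[e^{tS_N}] \le \exp\bigl(\mu(e^t-1)\bigr).
\end{equation*}
From here I would apply Markov's inequality in the exponential form: for any $t>0$ and any threshold $a$, $\mathbb{P}(S_N\ge a) \le e^{-ta}\,\mathbb{E}[e^{tS_N}]$, and symmetrically for $t<0$ and the lower tail.

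For the upper tail, set $a=(1+\beta)\lambda$ and pick $t=\log(1+\beta)>0$. The crucial point is that $e^t-1>0$, so the assumption $\lambda\ge \mu$ lets me replace $\mu$ by $\lambda$ in the exponent, obtaining
\begin{equation*}
\mathbb{P}\bigl(S_N\ge (1+\beta)\lambda\bigr) \le \exp\Bigl(\lambda\bigl[(e^t-1) - t(1+\beta)\bigr]\Bigr) = \exp\Bigl(-\lambda\bigl[(1+\beta)\log(1+\beta)-\beta\bigr]\Bigr).
\end{equation*}
The routine elementary inequality $(1+\beta)\log(1+\beta)-\beta \ge \beta^2/3$ for $\beta\in[0,1]$ (verified via the Taylor expansion $\beta^2/2-\beta^3/6+\cdots$, which exceeds $\beta^2(3-\beta)/6\ge \beta^2/3$ on that range) finishes the upper bound.

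For the lower tail, the argument is symmetric. With $t=\log(1-\beta)<0$ and $a=(1-\beta)\lambda$, now $e^t-1<0$, so the assumption $\lambda\le\mu$ again allows the replacement of $\mu$ by $\lambda$ in the exponent (the inequality flips but in our favor). Optimizing yields the exponent $\lambda\bigl[-\beta-(1-\beta)\log(1-\beta)\bigr]$, and the Taylor expansion $-(1-\beta)\log(1-\beta)-\beta = -\beta^2/2 - \beta^3/6 - \cdots \le -\beta^2/2$ gives $\mathbb{P}(S_N\le (1-\beta)\lambda) \le \exp(-\lambda\beta^2/2)$, as required.

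I do not expect a genuine obstacle here: the only subtlety is justifying the substitution of $\lambda$ for $\mu$ inside the exponent, which is precisely what the hypotheses $\lambda\ge\mu$ (upper tail) and $\lambda\le\mu$ (lower tail) are tailored to allow, since $e^t-1$ has the appropriate sign in each case. The remaining analytic inequalities for $(1\pm\beta)\log(1\pm\beta)$ are elementary and valid for $\beta\in[0,1]$.
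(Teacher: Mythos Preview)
Your argument is correct. The exponential-moment computation is standard, and you have correctly identified the one non-standard point: since $e^t-1$ has the right sign in each tail, the hypotheses $\lambda\ge\mu$ (upper tail) and $\lambda\le\mu$ (lower tail) permit replacing $\mu$ by $\lambda$ in the exponent. The elementary inequalities you invoke for $(1\pm\beta)\log(1\pm\beta)$ on $[0,1]$ are valid (the edge case $\beta=1$ in the lower tail is trivial to handle directly).

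The paper itself does not give a proof: it cites \cite{MU} for the classical $\lambda=\mu$ case and then asserts that the stated version ``may be deduced from this by an easy monotonicity argument.'' That monotonicity argument is presumably a stochastic-domination/coupling step (increase some $p_i$'s to raise the mean from $\mu$ to $\lambda$, which can only increase the upper-tail probability, then apply the classical bound; symmetrically for the lower tail). Your route is a genuine alternative: rather than reducing to the $\lambda=\mu$ case, you rerun the Bernstein argument and observe that the replacement $\mu\mapsto\lambda$ is legal inside the exponent. Both are short; yours is self-contained, while the paper's is a one-line reduction once the classical bound is granted.
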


The usual statement of Chernoff's inequality is the $\lambda=\mu$ case of the above statement, a proof of which is given in \cite{MU}.  The more general statement given here may be deduced from this by an easy monotonicity argument.

\subsection{Lower bounds on $ow(D)$}\label{lbs}

Throughout the section we shall write $B(A)$ for the set $\{v\in V:e(\{v\},A)=0\}$.  Since $B(A)$ maximises $e(A,B)$ over the set of all sets $B$ with $e(B,A)=0$ we have that $$ow(D)=\max_{A\ssq V(D)}e(A,B(A))\, .$$ All we shall need for the following proofs is that, for any probability distribution on subsets $A$ of $V(D)$, we have $ow(D)\ge \mathbb{E}(e(A,B(A)))$.

\begin{proof}[Proof of Proposition \ref{realreg}]  
Let $D$ be an oriented graph with maximum out-degree $\Delta^{+}$.  
Let $A\ssq V$ be a subset selected at random, each vertex included in $A$ 
independently with probability $p$.  To prove the proposition it suffices to show that, 
when $p$ is chosen appropriately, 
\begin{equation*} \mathbb{E}(e(A,B(A)))\ge \frac{e(D)}{4\Delta^{+}}\, .\end{equation*} 
Expressing $e(A,B(A))$ as $\sum_{\vec{xy}\in E}1_{x\in A, y\in B(A)}$, 
exchanging the order of summation and expectation, and using that $\mathbb{E}(1_{F})=\mathbb{P}(F)$ 
for any event $F$, we obtain 
\begin{equation*}\mathbb{E}(e(A,B(A)))=\sum_{\vec{xy}\in E}\mathbb{P}(x\in A, y\in B(A))\, .\end{equation*}
Fix an arc $\vec{xy}\in E(D)$.  The event $y\in B(A)$ is exactly 
the event $\bigcap_{v\in \Gamma^{+}(y)}(v\not\in A)$.  Thus, this event is independent of the 
event $x\in A$, and the probability that it occurs is at least 
$(1-p)^{d^{+}(y)}\ge (1-p)^{\Delta^{+}(D)}\ge 1-p\Delta^{+}(D)$.  From which we deduce,
\begin{equation*}\mathbb{E}(e(A,B(A)))\ge e(D) p(1-p\Delta^{+}).\end{equation*}
The proof is now completed by taking $p=1/2\Delta^{+}$.\end{proof}

\noindent In the case that $D$ is a regular oriented graph, 
the above proof shows that $\mathbb{E}(e(A,B(A)))\ge n/4$ where 
$A$ is selected by including each vertex in $A$ independently with probability 
$p=1/2d$ (where $d$ is the regular in- and out-degree). 
 In the general case, there may be a large range of degrees in $D$, 
and so there is no obvious choice of $p$.  To prove Theorem \ref{lb}, we first find a patch of 
vertices with approximately the same degree, and then proceed as above.

\begin{proof}[Proof of Theorem \ref{lb}] Define subsets $V^{+},V^{-}$ of the vertex set $V$ by setting $$V^{+}=\{v:d^{+}(v)\ge d^{-}(v)\}\quad \text{and}\quad V^{-}=\{v:d^{-}(v)\ge d^{+}(v)\}\, .$$
Partition $V^{+}$ into 
$V^{+}_{1},\dots V^{+}_{\lceil \log_{2}{n} \rceil }$ by defining 
$$V^{+}_{i}=\Bigl\{\,v\in V^{+}:\,d^{+}(v)\in [n/2^{i},n/2^{i-1})\,\Bigr\}\, .$$
Similarly partition $V^{-}$ into $V^{-}_{1},\dots ,V^{-}_{\lceil \log_{2}{n}\rceil}$.  
Since these  $2\lceil \log_{2}{n}\rceil$ sets cover $V$, one of them must have size 
at least $n/2\lceil \log_{2}{n}\rceil$.  By reversing the orientation of $D$ if necessary 
(which has no effect on $ow(D)$), we may assume that $|V^{-}_{i}|\ge n/2\lceil \log_{2}{n}\rceil$ 
for some $i$.  Having chosen this patch of vertices $V^{-}_{i}$, we define $p=2^{i}/3n$.  
We define the random subset $A$ of $V$ by including each vertex in $A$ independently 
with probability $p$.  Let $B=B(A)\cap V^{-}_{i}$.  
Arguing as in the proof of Proposition~\ref{realreg} above, we have, 
for each $y\in V^{-}_{i}$ and each arc $\vec{xy}$ of $D$, 
that the events $x\in A$, $y\in B$ are independent and that 
$\mathbb{P}(y\in B)=(1-p)^{d^{+}(y)}\ge (1-p)^{d^{-}(y)}\ge 1-pd^{-}(y)$.  Thus, 
$$ \mathbb{E}(e(A,B))=\sum_{y\in V^{-}_{i}}\sum_{\vec{xy}\in E(D)}\mathbb{P}(x\in A, y\in B)\ge \sum_{y\in V^{-}_{i}}pd^{-}(y)(1-pd^{-}(y))\, .$$ 
By the choice of $p$ we have that $pd^{-}(y)\in [\frac{1}{3},\frac{2}{3}]$.  
Since $x(1-x)\ge \frac{2}{9}$ for all $x\in [\frac{1}{3},\frac{2}{3}]$, 
we deduce that $$\mathbb{E}(e(A,B))\ge \frac{2|V^{-}_{i}|}{9}\ge \frac{n}{9\lceil \log_{2}{n}\rceil}\, .$$
\end{proof}

\subsection{Proof of Theorem \ref{biasex}: An oriented graph with no large $\eta$-biased subgraph}\label{biasexsec}

Fix $\eta\in (0,1)$.  To prove Theorem \ref{biasex} we must prove, 
for some constant $K=K_{\eta}$, that for every $n\in \mathbb{N}$, 
there is an oriented graph $D$ on $n$ (non-isolated) vertices 
with $bias_{\eta}(D)\le K n/\log_{2}{n}$.  For notational convenience, we in fact prove the 
bound $bias_{\eta}(D)\le K \lceil n/l\rceil $ where $l=\lfloor \log_2(n)\rfloor$.

We now define the random oriented graph $D$.

Let $V:=\{1,\dots ,n\}$.  We partition $V$ into subsets $V_{1},\dots ,V_{l}$ of approximately 
equal size.  Let $n=l\lfloor \frac nl\rfloor+q$, where $q$ is an integers $0\le q<l$. 
We denote $r:=\lceil \frac nl\rceil$.
 Let $V_{1},\dots ,V_{l}$ 
be a partition of $V$ such that:
$$ |V_{i}|=r \quad \text{for $i=1,\dots , q$}\quad \text{and} \quad |V_{i}|=\lfloor \frac nl \rfloor \quad \text{for $i=q+1,\dots ,l$}\, .$$
Let $G$ be the random simple graph on vertex set $V$ defined by including each edge $xy$, $x\in V_{i},\, y\in V_{j}$, in $G$ with probability $1/2^{i+j-1}$, independently of all other edges.  Let $D$ be obtained from $G$ by orienting its edges at random.

\noindent\textbf{Basic Properties of $D$.}\\
$(i)$ Each arc $\vec{xy}$, $x\in V_{i},\, y\in V_{j},\, i\neq j$, has probability $1/2^{i+j}$ of being an arc 
of $D$.\\
$(ii)$ The event $\vec{xy}\in E(D)$ is independent of the event $\vec{uv}\in E(D)$ 
so long as $\{u,v\}\neq \{x,y\}$.  Furthermore, the event $\vec{xy}\in E(D)$ is independent of 
all events $F$ generated by the events $\vec{uv}\in E(D):\{u,v\}\neq\{x,y\}$.\\
$(iii)$ While there is dependence between the events $\vec{xy}\in E(D)$ and $\vec{yx}\in E(D)$ 
(e.g., they are mutually exclusive), 
the bounds $0\le \mathbb{P}\bigl(\vec{xy}\in E(D)\,|\,F\bigr)\le 1/2^{i+j-1}$ hold for any event 
$F$ generated by the events $\vec{uv}\in E(D):(u,v)\neq (x,y)$.

Showing that, with high probability, $D$ has no isolated vertex is relatively straightforward, and may be easily checked by the reader.

We introduce now some new notation. We set $\delta=1-\eta$, $\gamma=(1+\eta)/2$, 
$k=\lceil \log_{2}{\frac{64}\delta}\rceil$, and $K = K_{\eta} = \lceil(32+2k)2^8\delta^{-2}\rceil$. 
 An orientation $F$ of an $n$-cycle has $bias_{\eta}(F)\le n$.  
This example suffices for all $n$ such that $n \le Kn/\log_{2}{n}$,  i.e., $n\le 2^{K}$.   
Fix $n\ge 2^{K}$, and let $D$ be the random oriented graph defined above.  
The proof of Theorem \ref{biasex} is completed by proving that with high probability 
$bias_{\eta}(D)\le Kr$ (recall that $r=\lceil \frac nl\rceil$).

Thus, our task is to bound the probability that there is a pair $A,B\ssq V$ 
with $e(A,B)> Kr$ and $e(B,A)\le \eta e(A,B)$.  
If we consider this event separately for each pair $A,B$, we end up considering far too many events. 
 So we restrict our attention to a certain subset of this family of events. 
 We define a family $\mathcal{R}=\mathcal{R}(\eta)$ 
of pairs $(R,R^{*})$ of subsets of $V$ such that each biased set $E(A,B)$, for $A$ and $B$ subsets 
of $V$, provide two elements $(R,R^*)$ and $(T,T^*)$ of $\mathcal R$ 
which do not belong to four type of events,  concerning pairs of elements of $\mathcal R$.
This will then reduce the problem to bounding the probabilities of these four events; we prove then
these bounds and conclude the proof of Theorem~\ref{biasex}.

To define the family $\mathcal{R}$ below, we first need to associate an (increasing with respect to inclusion)
 function defined on the whole family of subsets of $V$. For a subset $R$ of $V$, and $i=1,\dots , l$, 
let $R_{i}:=R\cap V_{i}$, $r_{i}:=|R_{i}|$, and define $s(R)$ by 
$$s(R):=\sum_{i=1}^{l}\frac{r_{i}}{2^{i}}\, .$$ 
Note that since each $r_i$ is bounded by $r=\lceil \frac nl\rceil$, $s(R)$ itself is strictly smaller than
$r$. Notice also that if $R \subseteq A$, then $s(R)\leq s(A)$. 
For $j = 1,\dots,2l$, consider the collection of sub-intervals $I_j$ of $[0,r)$
 defined by  
$$I_j:=\Bigl[2^{-j-1}r, 2^{-j+1}r\Bigr).$$
By the choice of $l$ (as the largest integer not exceeding $\log_2 n$), 
we infer that for every non-empty subset $R$ of $V$, there is a $j$ such that 
$s(R) \in I_j$. 

 For each $j=1,\dots, 2l$, we denote by $\mathcal{R}_{j}$ the family of pairs of subsets 
$(R,R^*)$ such that $R \subseteq \bigcup_{i\leq j+k} V_i$, $R^* = R \cup \bigcup_{i>j+k} V_i$, 
and $s(R) \in I_{j}$.  Formally,
 $$ \mathcal{R}_{j}:=\Big\{\,\Big (R,R\cup\bigcup_{i>j+k}V_{i}\Big ): \quad R\ssq \bigcup_{i=1}^{j+k}V_{i}\quad\textrm{ and } \quad s(R)\in \Big[2^{-j-1}r, 2^{-j+1}r\Big)\,\Big\}\,.$$  
The family $\mathcal{R}$ is defined as the union of all the sets $\mathcal R_J$, i.e.,
 $$ \mathcal{R}:=\bigcup_{j=1}^{2l}\mathcal{R}_{j}\, .$$

The importance of the family $\mathcal R$ defined above is apparent in the following two lemmas, 
namely that, the pairs $(R,R^*)\in \mathcal R$ "cover`` every non-empty subset of $V$, 
and the size of $\mathcal R$ is sufficiently small.
\begin{lemma}\label{lem:subset}
For every non-empty subset $A \subseteq V$, 
there is an element $(R,R^*) \in \mathcal R$ such that $R \subseteq A \subseteq R^*$
\end{lemma}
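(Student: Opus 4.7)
The plan is to construct, for each non-empty $A\ssq V$, an index $j\in\{1,\dots,2l\}$ together with a specific choice of $R$ so that $(R,R\cup\bigcup_{i>j+k}V_i)\in\mathcal R_j$ and $R\ssq A\ssq R\cup\bigcup_{i>j+k}V_i$. The natural (in fact forced, if the containments are to hold) choice is
$R^{(j)}:=A\cap\bigcup_{i\le j+k}V_i$, with $R^{*,(j)}:=R^{(j)}\cup\bigcup_{i>j+k}V_i$. With this choice, $R^{(j)}\ssq A$ and $A\ssq R^{*,(j)}$ hold tautologically, and the structural condition $R^{(j)}\ssq\bigcup_{i\le j+k}V_i$ is automatic. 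The only nontrivial task is therefore to select $j\in\{1,\dots,2l\}$ for which $s(R^{(j)})\in I_j=[2^{-j-1}r,2^{-j+1}r)$.

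To locate a suitable index, I would track $\sigma_j:=s(R^{(j)})$. Since $R^{(j)}\ssq R^{(j+1)}$, the sequence $(\sigma_j)$ is non-decreasing, and is bounded above by $s(A)<r$ (using $|V_i|\le r$ and $\sum_{i\ge 1}2^{-i}=1$). Meanwhile, the right endpoint $2^{-j+1}r$ of $I_j$ decays geometrically in $j$. Because $\sigma_1\le s(A)<r=2^{0}r$, the set $\{j\in\{1,\dots,2l\}:\sigma_j<2^{-j+1}r\}$ is non-empty; let $J$ be its maximum. I will argue $\sigma_J\in I_J$, so that $(R^{(J)},R^{*,(J)})\in\mathcal R_J$ completes the proof.

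The upper half of the interval condition, $\sigma_J<2^{-J+1}r$, is immediate from the definition of $J$. The main obstacle is the lower half, $\sigma_J\ge 2^{-J-1}r$. Here I would exploit the maximality of $J$ (in the case $J<2l$): by maximality, $\sigma_{J+1}\ge 2^{-J}r$, and the single-step increment is controlled by
\[
\sigma_{J+1}-\sigma_J \;=\; \frac{r_{J+k+1}(A)}{2^{J+k+1}} \;\le\; \frac{r}{2^{J+k+1}}.
\]
Combining these gives $\sigma_J\ge 2^{-J}r\bigl(1-2^{-(k+1)}\bigr)\ge 2^{-J-1}r$, since $k\ge 0$. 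The conceptual content is that $k$ is chosen large enough that each jump of $\sigma_j$ is negligible on the scale of $I_j$ (which has ratio $4$); without a buffer of this kind the sequence could in principle overshoot $I_J$ entirely.

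Finally, the boundary case $J=2l$ needs a separate verification that $\sigma_{2l}=s(A)$ meets the threshold $2^{-2l-1}r$. Here I would use that $A$ is non-empty to get $s(A)\ge 1/2^l$ (some $V_i$ with $i\le l$ contributes at least one vertex), and reduce the required inequality $1/2^l\ge 2^{-2l-1}r$ to $r\le 2^{l+1}$; this is immediate from $l=\lfloor\log_2 n\rfloor$ and $r=\lceil n/l\rceil\le n<2^{l+1}$. Together, these cases give $\sigma_J\in I_J$ and hence prove the lemma.
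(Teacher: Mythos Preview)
Your argument is correct. The construction of $R^{(j)}=A\cap\bigcup_{i\le j+k}V_i$ is the same as the paper's, and both proofs hinge on the same key estimate: the tail $\sum_{i>j+k}|A\cap V_i|/2^i$ is at most $r/2^{j+k}$, which is negligible on the scale of $I_j$.

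The only difference is in how the index is selected. You track the non-decreasing sequence $\sigma_j=s(R^{(j)})$, take $J$ maximal with $\sigma_J<2^{-J+1}r$, and use maximality (plus a separate boundary argument at $J=2l$) to secure the lower bound $\sigma_J\ge 2^{-J-1}r$. The paper instead reads off $j$ directly from $s(A)$: choose $j$ with $s(A)\in[2^{-j}r,2^{-j+1}r)$, so the upper bound $s(R)\le s(A)<2^{-j+1}r$ is immediate, and the lower bound follows in one line from $s(R)\ge s(A)-r/2^{j+k}\ge 2^{-j}r-2^{-j-k}r\ge 2^{-j-1}r$. This avoids both the maximality device and the boundary case, and is a cleaner route to the same conclusion.
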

\begin{proof}
 Let $1\leq j\leq l$ be such that $s(A) \in [2^{-j}r,2^{-j+1}r)$. Define 
$R = \bigcup_{i\leq j+k} (V_i\cap A)$ and $R^* = R\cup \bigcup_{i>j+k}V_i$. 
Since $\sum_{i\geq j+k+1} \frac{|V_i\cap A|}{2^i} < 2^{-j-k}r$, it follows that $2^{-j-1}r\leq s(R)<2^{-j+1}r$, i.e., 
$(R, R^*) \in \mathcal R_j\subset \mathcal R$, and the claim follows.
\end{proof}

\begin{lemma}\label{lem:size} $|\mathcal{R}|\le \exp\bigl((k+8)r\bigr)$\end{lemma}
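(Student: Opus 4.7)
The plan is to bound $|\mathcal R_j|$ for each $j\in\{1,\dots,2l\}$ individually and then sum over $j$. First note that in any pair $(R,R^*)\in\mathcal R_j$ the second coordinate $R^*=R\cup\bigcup_{i>j+k}V_i$ is determined by $R$, so $|\mathcal R_j|$ equals the number of subsets $R\subseteq \bigcup_{i\le j+k}V_i$ with $s(R)\in [2^{-j-1}r,\,2^{-j+1}r)$. I will drop the lower endpoint and upper bound $|\mathcal R_j|$ by the number of $R$ with $s(R)\le 2^{-j+1}r$.

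The main step is a Chernoff--style exponential moment argument. For any $\lambda\ge 0$, using $\mathbf 1_{s(R)\le S}\le e^{\lambda(S-s(R))}$ and the fact that $s(R)=\sum_{i=1}^{j+k}r_i/2^{i}$ decomposes over the parts $V_1,\dots,V_{j+k}$, one has
\begin{equation*}
|\mathcal R_j|\;\le\; e^{\lambda\cdot 2^{-j+1}r}\sum_{R}e^{-\lambda s(R)}
\;=\;e^{\lambda\cdot 2^{-j+1}r}\prod_{i=1}^{j+k}\bigl(1+e^{-\lambda/2^{i}}\bigr)^{|V_i|}.
\end{equation*}
Applying $1+x\le e^x$ and $|V_i|\le r$ gives
\begin{equation*}
|\mathcal R_j|\;\le\;\exp\!\Bigl(\lambda\cdot 2^{-j+1}r \;+\; r\sum_{i=1}^{j+k}e^{-\lambda/2^{i}}\Bigr).
\end{equation*}

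The decisive move is the choice $\lambda=2^{j}$, which balances the two terms. Then $\lambda\cdot 2^{-j+1}r=2r$, and the tail sum splits into three pieces: the rapidly convergent geometric-double sum $\sum_{i=1}^{j-1}e^{-2^{j-i}}=\sum_{m=1}^{j-1}e^{-2^{m}}<1$, the single term $e^{-1}<1$ at $i=j$, and the contribution $\sum_{m=1}^{k}e^{-2^{-m}}\le k$ from $i=j+1,\dots,j+k$. Altogether $\sum_i e^{-\lambda/2^{i}}\le k+2$, which yields $|\mathcal R_j|\le e^{(k+4)r}$.

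Finally, summing over the $2l$ values of $j$,
\begin{equation*}
|\mathcal R|\;\le\;2l\cdot e^{(k+4)r}\;\le\;e^{(k+8)r},
\end{equation*}
where the last step uses $2l\le e^{4r}$; this is easy since $l\le \log_2 n$ while $r=\lceil n/l\rceil$ grows at least linearly in $n/\log n$, and $n$ is taken large (in particular $n\ge 2^{K}$). The only subtlety is the calibration $\lambda=2^{j}$ and verifying that the resulting three-piece sum is bounded by a constant plus $k$; the constants $k+4$ and $k+8$ leave plenty of slack for this calculation. I do not expect any substantial obstacle beyond this standard exponential-moment optimization.
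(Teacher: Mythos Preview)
Your argument is correct, and it takes a genuinely different route from the paper's own proof. Both proofs reduce to bounding the number of $R\subseteq\bigcup_{i\le j+k}V_i$ with $s(R)<2^{-j+1}r$, but the paper proceeds combinatorially: it observes that the constraint forces $r_i\le 2^{i+1-j}r$ for each $i$, treats the ``free'' coordinates $i\ge j-1$ (where the constraint is vacuous) separately at a cost of $2^{(k+2)r}$, and for the remaining $i\le j-2$ counts tuples $(R_1,\dots,R_{j-2})$ via the binomial bound $\binom{r}{r_i}\le (er/r_i)^{r_i}$ at the extremal sizes $r_i=2^{i+1-j}r$, arriving at $|\mathcal R_j|\le e^{(k+7)r}$. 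Your approach replaces this case analysis by the Laplace-transform identity $\sum_R e^{-\lambda s(R)}=\prod_i(1+e^{-\lambda/2^i})^{|V_i|}$ and the single optimisation $\lambda=2^j$; the resulting sum $\sum_i e^{-2^{j-i}}$ naturally splits into a doubly-exponentially decaying part, a constant term, and $k$ terms bounded by $1$, giving the slightly sharper $|\mathcal R_j|\le e^{(k+4)r}$. The exponential-moment method is cleaner and avoids the bookkeeping with binomial coefficients, while the paper's argument is perhaps more transparent about where the $k$ in the exponent comes from (the $k+2$ unconstrained coordinates). Your final step $2l\le e^{4r}$ is fine and in fact holds for all $n\ge 2$, so the appeal to $n\ge 2^K$ is not even needed there.
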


\begin{proof} It suffices to prove the bound $|\mathcal{R}_{j}|\le \exp\bigl((k+7)r\bigr)$ 
for all $1\leq j\leq 2l$. 
Fix such a $j$. We have to prove 
that there are at most $\exp\bigl((k+7)r\bigr)$ sets $R\subset \bigcup_{i=1}^{j+k}V_{i}$ 
for which $s(R)\in [2^{-j-1}r,2^{-j+1}r)$.  
In fact we will only use the upper bound $s(R)< 2^{-j+1}r$ to bound the cardinality of $\mathcal R_j$. 

Define $R_i:=R\cap V_i$ so that $r_i= |R_i|$. First remark that the bound $s(R)< 2^{-j+1}r$ 
immediately implies that 
\begin{equation}\label{eq:card}
 r_{i} \le 2^{i+1-j}r \qquad \textrm{for all} \qquad i\le j+k.
\end{equation}
What we shall actually prove is that 
the number of $(j+k)$-tuples $(R_1,\dots,R_{j+k})$ with 
the property that $R_i \subseteq V_i$ and $r_i \leq 2^{i+1-j}r$, is bounded
by $\exp\bigl((k+7)r\bigr)$. For $i=j-1,\dots,j+k$, the inequalities in (\ref{eq:card}) 
are automatically verified, i.e., $R_{j-1}, R_{j}, \dots, R_{j+k}$ 
can be any subset of $V_{j-1},V_{j}, \dots, V_{j+k}$, respectively. 
There are $2^{(k+2)r} < \exp\bigl((k+2)r\bigr)$ ways to choose $R_{j-1},R_{j},\dots,R_{j+k}$.


\noindent The number of ways to choose the subsets $R_1,\dots,R_{j-2}$ can be bounded as follows. 
There are at most $r^{j-2} \le r^l \leq \exp(r)$ ways to choose the sequence of 
set sizes $(r_{1},\dots ,r_{j-2})$, and for each choice of $(r_{1},\dots ,r_{j-2})$, 
the number of sets $(R_{1},\dots , R_{j-2)})$ with corresponding cardinalities 
is bounded by the product of binomial coefficients $\binom{|V_i|}{r_i}$ for $i=1,\dots,j-2$.  
By well known properties of binomial coefficients, and since $r_i \leq r/2$ 
by inequalities (\ref{eq:card}) for $i\leq j-2$, this latter product takes its maximum 
exactly when each $r_{i}$ 
takes its maximum allowed value, i.e., for $r_{i}=2^{i+1-j}r$ for each $1\le i\le j-2$. 
We infer that the number of $(j-2)$-tuples $(R_1,\dots,R_{j-2})$ 
with cardinality constraints above is bounded 
by
\begin{align*}
 \exp(r) \prod_{i=1}^{j-2}\, &\binom{r}{2^{i+1-j}r} \leq \exp(r) \prod_{i=1}^{j-2}\, 
\Bigl( e \frac{r}{2^{i+1-j}r}\Bigr)^{2^{i+1-j}r}\\
&\leq \exp(r)\,\exp\Bigl(\sum_{i=1}^{j-2}2^{j-1-i}r\Bigr)\, 
\exp\Bigl(\sum_{i=1}^{j-2}(i+1-j)2^{j-1-i}r\Bigr)\leq \exp(5r),
\end{align*}
and the lemma follows. Note that in the first inequality above, 
we used the well known bound $\binom{a}{b}\le (e\, a/b)^{b}$ on binomial co-efficients 
(using the fact that $x\log(y/x)\le \sqrt{xy}$ for all positive reals $x,y$).
 \end{proof}

 Lemma~\ref{lem:size} shows that the number of pairs $(R,R^*), (T,T^*) \in \mathcal R$ is bounded by 
$\exp\bigl((2k+16)r\bigr)$. This is the main property we will use in the following. 
Namely, the four type of events that we are going to define below concern 
pairs of elements of $\mathcal R$, 
and have the property  that the probability 
that one of these event happens for a given pair of elements in $\mathcal R$,
is bounded by $\exp\bigl(-(32+2k)r\bigr)$. The bound on the total number of pairs 
then show that with (high) positive probability (as function of $r$) non of 
the four events happen. This will then complete the proof of Theorem~\ref{biasex} 
since as we said, we will show that the event there
exists a biased $E(A,B)$ is contained in one of these four events.   

In the following, we use the notation $\bar{e}(R,T)$ for the number of 
edges between $R$ and $T$ in the underlying graph $G$, so that $\bar{e}(R,T)=e(R,T)+e(T,R)$.  
We also write $\tilde{e}(R,R^{*};T,T^{*})$ for $e(R^{*},T^{*})-e(R,T)$.  
Consider the following four events concerning a given pair of elements $(R,R^*)$ and $(T,T^*)$ 
in $\mathcal R$.

\begin{itemize}
 \item[$\mathbf{F_I}$:] For a given pair of elements $(R,R^{*}),(T,T^{*})\in \mathcal{R}$
 with $s(R)s(T)\ge Kr/4$, 
\begin{center}
  $\bar{e}(R,T)\le s(R)s(T)/2$. 
\end{center}
 \item[$\mathbf{F_{II}}$:] For a given pair of elements $(R,R^{*}),(T,T^{*})\in \mathcal{R}$ with 
$\bar{e}(R,T)\ge Kr/8$,
\begin{center}
  $e(T,R)< \gamma e(R,T)$.
\end{center}
 \item[$\mathbf {F_{III}}$:] For a given pair of elements $(R,R^{*}),(T,T^{*})\in \mathcal{R}$ 
with $s(R)s(T)\ge Kr/4$, 
\begin{center}
 $\tilde{e}(R,R^{*};T,T^{*})\ge \delta s(R)s(T)/4$.
\end{center}
 \item[$\mathbf{F_{IV}}$:] For a given pair of elements $(R,R^{*}),(T,T^{*})\in \mathcal{R}$ 
with $s(R)s(T)\le Kr/4$, 
\begin{center}
 $e(R^{*},T^{*})> Kr$.
\end{center}
\end{itemize}

%

The following lemma shows us that the event of interest to us is contained 
in the union of the above four type of events for pairs of elements of $\mathcal R$, 
 and so combining the bounds on the probabilities of $\mathbf{F_{*}}$ 
with Lemma~\ref{lem:size} will conclude the proof of Theorem \ref{biasex}.

\begin{lemma} The event that there is a pair $A,B\ssq V$ with 
$e(A,B)> Kr$ and $e(B,A)\le \eta e(A,B)$, is contained in the union 
$\mathbf{F_{I}}\cup \mathbf{F_{II}}\cup \mathbf{F_{III}}\cup \mathbf{F_{IV}}$ over all 
pair of elements $(R,R^*), (T,T^*)$ of $\mathcal R$.\end{lemma}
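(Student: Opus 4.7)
The strategy is to prove the contrapositive. Suppose $A, B \subseteq V$ satisfy $e(A,B) > Kr$ and $e(B, A) \le \eta e(A, B)$; we must exhibit a pair of elements of $\mathcal{R}$ for which one of $\mathbf{F_I}, \mathbf{F_{II}}, \mathbf{F_{III}}, \mathbf{F_{IV}}$ holds. By Lemma~\ref{lem:subset}, applied separately to $A$ and to $B$, there exist $(R, R^{*}), (T, T^{*}) \in \mathcal{R}$ such that $R \subseteq A \subseteq R^{*}$ and $T \subseteq B \subseteq T^{*}$; this is the pair of elements on which we focus for the remainder of the argument.

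The argument splits according to the value of $s(R) s(T)$ relative to the threshold separating $\mathbf{F_{III}}$ from $\mathbf{F_{IV}}$. If $s(R) s(T) < Kr/4$, then the inclusions $A \subseteq R^{*}$ and $B \subseteq T^{*}$ give
\[
  e(R^{*}, T^{*}) \;\ge\; e(A, B) \;>\; Kr,
\]
which is precisely $\mathbf{F_{IV}}$ for our pair, and we are done.

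Assume then that $s(R) s(T) \ge Kr/4$, and suppose for contradiction that none of $\mathbf{F_I}, \mathbf{F_{II}}, \mathbf{F_{III}}$ holds for our pair (in either ordering, as needed). Unpacking the negations: ``not $\mathbf{F_I}$'' gives $\bar{e}(R,T) > s(R) s(T)/2$, which in particular exceeds $Kr/8$ and so allows us to invoke ``not $\mathbf{F_{II}}$'' to obtain $e(T,R) \ge \gamma e(R,T)$; and ``not $\mathbf{F_{III}}$'' gives the ``slack'' bound $e(R^{*}, T^{*}) - e(R, T) < \delta s(R) s(T)/4$, with its companion bound $e(T^{*}, R^{*}) - e(T, R) < \delta s(R) s(T)/4$ from the reversed ordering. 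Chaining the bias hypothesis with the inclusions,
\[
  e(T, R) \;\le\; e(B, A) \;\le\; \eta\, e(A, B) \;\le\; \eta\, e(R^{*}, T^{*}) \;<\; \eta\, e(R, T) + \frac{\eta \delta}{4}\, s(R)s(T),
\]
and using $e(T, R) \ge \gamma e(R, T)$ together with the identity $\gamma - \eta = \delta/2$, we extract the upper bound $e(R, T) < \eta s(R) s(T)/2$. Combined with ``not $\mathbf{F_I}$'' this forces the matching lower bound $e(T, R) > \delta s(R) s(T)/2$, and substituting both estimates back into the chain of inequalities above (making use of the companion slack bound on $e(T^{*}, R^{*}) - e(T, R)$) produces the desired numerical contradiction.

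The main technical obstacle is the arithmetic in Case 2. The four thresholds ($s(R)s(T)/2$ in $\mathbf{F_I}$; the factor $\gamma$ in $\mathbf{F_{II}}$; $\delta s(R)s(T)/4$ in $\mathbf{F_{III}}$; $Kr$ in $\mathbf{F_{IV}}$) have been calibrated using the identities $\gamma = (1+\eta)/2$, $\delta = 1 - \eta$ and $\gamma - \eta = \delta/2$ so that the various slacks cancel and a contradiction is obtained for every $\eta \in (0, 1)$. In contrast, Case 1 is simply a consequence of the inclusions $A \subseteq R^{*}$, $B \subseteq T^{*}$, and both cases rely crucially on Lemma~\ref{lem:subset} to replace the arbitrary sets $A$ and $B$ by the ``structured'' surrogates $(R, R^{*})$ and $(T, T^{*})$ from the small family $\mathcal{R}$ that the union bound can handle.
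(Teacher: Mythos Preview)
Your overall strategy coincides with the paper's: pass from $A,B$ to a fixed pair $(R,R^{*}),(T,T^{*})\in\mathcal R$ via Lemma~\ref{lem:subset}, dispose of the case $s(R)s(T)<Kr/4$ with $\mathbf{F_{IV}}$, and in the remaining case combine the negations of $\mathbf{F_{I}},\mathbf{F_{II}},\mathbf{F_{III}}$ with the bias of $(A,B)$. Your Case~1 is fine, and your derivations of $e(R,T)<\eta\,s(R)s(T)/2$ and $e(T,R)>\delta\,s(R)s(T)/2$ are correct.

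The gap is your final sentence: the promised ``numerical contradiction'' does not materialise for every $\eta\in(0,1)$. Substituting your two estimates back into the only chain you have (via $e(A,B)\le e(R^{*},T^{*})<e(R,T)+\tfrac{\delta}{4}s(R)s(T)$) yields
\[
\frac{\delta}{2}\,s(R)s(T)\;<\;e(T,R)\;\le\;\eta\,e(R^{*},T^{*})\;<\;\eta\Big(\frac{\eta}{2}+\frac{\delta}{4}\Big)s(R)s(T)\;=\;\frac{\eta(1+\eta)}{4}\,s(R)s(T),
\]
i.e.\ $\eta^{2}+3\eta-2>0$; but this is \emph{true} once $\eta>(\sqrt{17}-3)/2\approx 0.56$, so no contradiction there. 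The companion bound $e(T^{*},R^{*})-e(T,R)<\tfrac{\delta}{4}s(R)s(T)$ cannot save this, since $e(T^{*},R^{*})$ only enters as an \emph{upper} bound for $e(B,A)$, whereas contradicting $e(B,A)\le\eta\,e(A,B)$ requires a \emph{lower} bound on $e(B,A)$; and invoking the reversed $\mathbf{F_{II}}$ bound $e(R,T)\ge\gamma\,e(T,R)$ gives only $\gamma\delta<\eta$, again not a contradiction for $\eta>\sqrt{2}-1$. (Concretely, for $\eta=0.8$ the system ``not $\mathbf{F_{I}}$, not $\mathbf{F_{II}}$ both ways, not $\mathbf{F_{III}}$ both ways, $e(T,R)\le\eta\,e(R^{*},T^{*})$'' is satisfied by $e(R,T)=0.35\,\alpha$, $e(T,R)=0.318\,\alpha$, $e(R^{*},T^{*})=0.399\,\alpha$ with $\alpha=s(R)s(T)$.) The paper organises the endgame differently: it uses both directions of ``not $\mathbf{F_{II}}$'' together with ``not $\mathbf{F_{I}}$'' to extract a \emph{lower} bound on $e(R,T)$ in terms of $s(R)s(T)$, so that $\tilde e(R,R^{*};T,T^{*})$ becomes a small fraction of $e(R^{*},T^{*})$, and then concludes via $e(B,A)\ge\gamma\bigl(e(R^{*},T^{*})-\tilde e\bigr)>\eta\,e(R^{*},T^{*})\ge\eta\,e(A,B)$. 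It is that lower bound on $e(R,T)$ (making the slack $\tilde e$ negligible relative to $e(R^{*},T^{*})$) that you are missing; your upper bound on $e(R,T)$ pushes in the wrong direction.
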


\begin{proof}  Suppose none of the events $\mathbf{F_{I},F_{II},F_{III},F_{IV}}$ 
occur for any pair $(R,R^*), (T,T^*)$ in $\mathcal R$.
 We have to show 
that there is no pair $A,B\ssq V$ with 
$e(A,B)> Kr$ and $e(B,A)\le \eta e(A,B)$. 
For the sake of a contradiction, suppose $A,B\ssq V$ be such a (biased) pair.
 Applying Lemma~\ref{lem:subset}, let $(R,R^*)$ and $(T,T^*)$ be two elements 
of $\mathcal R$ such that $R\ssq A\ssq R^{*}$ and 
$T\ssq B\ssq T^{*}$. We will consider the four events $\mathbf{F_*}$ above 
for the pair $(R,R^*), (T,T^*)$, and the pair $(T,T^*), (R,R^*)$, and derive a contradiction.

\noindent $\bullet$  If $s(R)s(T)\le Kr/4$, then, since $\mathbf{F_{IV}}$ does not occur, 
we should have
 $e(A,B)\le e(R^{*},T^{*})\le Kr$, contradiction.

\noindent $\bullet$ If, on the other hand, $s(R)s(T)\ge Kr/4$, then, 
since none of $\mathbf{F_{I}, F_{II}, F_{III}}$
 occur for the pair $(R,R^*), (T,T^*)$, and the pair $(T,T^*), (R,R^*)$, 
we have that $\bar{e}(R,T) > s(R)s(T)/2$, that $e(T,R)\ge \gamma e(R,T)$, that 
$e(R,T)\ge \gamma e(T,R)$, and that 
$\tilde{e}(R,R^{*};T,T^{*})< \delta s(R)s(T)/4$. 
 
It follows that 
$$e(B,A)\ge e(T,R)\ge \frac{1+\eta}{2}e(R,T)= \frac{1+\eta}{2}\Big( e(R^{*},T^{*})-\tilde{e}(R,R^{*};T,T^{*})\Big)\, .$$

Note that $\bar e(R,T) = e(R,T)+e(T,R) \leq (1+\frac 1\gamma)e(R,T)$. 
We infer that $e(R,T) \geq \frac \gamma{\gamma+1} \bar e(R,T) \geq e(R,T)/2 \geq e(R)e(S)/4$. 
From the bounds $e(R^{*},T^{*})\ge e(R,T)\geq s(R)s(T)/4$ and 
$\tilde{e}(R,R^{*};T,T^{*})\le \delta s(R)s(T)/4$,
 we deduce that 
$$\tilde{e}(R,R^{*};T,T^{*})\le \delta e(R^{*},T^{*})/2,$$
 and so $$e(B,A)\ge \frac{1+\eta}{2}\, \big(1-\frac{\delta}{2}\Big) e(R^{*},T^{*})> \eta e(R^{*},T^{*})\ge \eta e(A,B)\,,$$
yielding again to a contradiction.\end{proof}


\noindent So to complete the proof of Theorem \ref{biasex}, by the choice of $K$, we are only left to prove the following lemmas.

%
%
%

\begin{lemma}[Bounding $\mathbb P(\mathbf {F_I})$]\label{LI} 
For any pair $(R,R^{*}),(T,T^{*})\in \mathcal{R}$ with $s(R)s(T)\ge Kr/4$,
$$\mathbb{P}\,\Bigl(\,\bar{e}(R,T)\le s(R)s(T)/2\,\Bigr)\le \exp(-Kr/32).$$ 
\end{lemma}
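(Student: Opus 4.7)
The plan is to apply the lower-tail version of Chernoff's bound (Lemma~\ref{Chernoff}) to $\bar{e}(R,T)$, which up to an overlap correction is essentially a sum of independent Bernoulli indicators over potential edges of the random graph $G$.

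First I would rewrite $\bar{e}(R,T) = \sum_{\{x,y\}} c_{xy}\, X_{\{x,y\}}$, where each $X_{\{x,y\}}$ is the (independent) Bernoulli indicator that $\{x,y\} \in E(G)$ (with parameter $1/2^{i+j-1}$ for $x\in V_i$, $y\in V_j$) and the coefficient $c_{xy} := 1_{x\in R,\, y\in T} + 1_{x\in T,\, y\in R}$ lies in $\{0,1,2\}$, taking the value $2$ exactly when both endpoints lie in $R\cap T$. A quick check verifies that this representation is independent of how the edges of $G$ are oriented to produce $D$. Grouping terms by the index pair $(i,j)$ then yields
\[\mathbb{E}[\bar e(R,T)] \;=\; 2\,s(R)\,s(T) - O, \qquad O \;:=\; \sum_i \frac{|R\cap T\cap V_i|}{2^{2i-1}} \;\le\; 2r\sum_{i\ge 1} 4^{-i} \;=\; \frac{2r}{3},\]
since each $|R\cap T\cap V_i|\le r$.

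Since Lemma~\ref{Chernoff} is stated for sums of independent $\{0,1\}$-variables, I would then introduce the auxiliary variable $\tilde Y := \sum_{\{x,y\}:\, c_{xy} \ge 1} X_{\{x,y\}}$, which counts every contributing edge just once (whether or not both endpoints lie in $R\cap T$). By construction $\tilde Y \le \bar e(R,T)$ and $\tilde Y$ is itself a sum of independent Bernoullis. A direct computation along the same lines gives $\mathbb{E}[\tilde Y] = \mathbb{E}[\bar e(R,T)] - \mathbb{E}[\#\text{edges inside } R\cap T] \ge 2\,s(R)\,s(T) - s(R\cap T)^2 - r/3 \ge s(R)\,s(T) - r/3$, where I use $s(R\cap T)\le \min(s(R), s(T))$ in the last step. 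Given the hypothesis $s(R)\,s(T)\ge Kr/4$ and the fact that $K$ is chosen very large (of order $(32+2k)2^8/\delta^2$), the correction $r/3$ is a small fraction of $s(R)\,s(T)$, so $\mathbb{E}[\tilde Y]$ is essentially at least $s(R)\,s(T)$.

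Finally, I would invoke the lower-tail Chernoff inequality with $\lambda$ just below $s(R)\,s(T)$ and $\beta$ just above $1/2$ to conclude
\[\mathbb{P}\bigl(\bar e(R,T) \le s(R)\,s(T)/2\bigr) \;\le\; \mathbb{P}\bigl(\tilde Y \le s(R)\,s(T)/2\bigr) \;\le\; \exp\bigl(-s(R)\,s(T)/8\bigr) \;\le\; \exp(-Kr/32),\]
the last inequality using $s(R)\,s(T)\ge Kr/4$. The main obstacle is handling the overlap $R\cap T$: the coefficient-$2$ contributions prevent a completely direct application of Chernoff to $\bar e(R,T)$ (which is why we pass to the 0/1 sum $\tilde Y$), and the lower bound on $\mathbb{E}[\tilde Y]$ must be controlled so that it stays within a factor $(1-o(1))$ of $s(R)\,s(T)$. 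The slack between $s(R)\,s(T)/8$ and $Kr/32$ (a factor of at least $K/4$) comfortably absorbs the $O(1/K)$ losses arising from the overlap correction.
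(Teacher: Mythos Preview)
Your proposal is correct and follows essentially the same route as the paper: write $\bar e(R,T)$ (or a lower bound for it) as a sum of independent edge indicators in $G$, show its mean is at least about $s(R)s(T)\ge Kr/4$, and apply the lower-tail Chernoff bound with $\beta\approx 1/2$. The paper does this in three lines, simply asserting that $\bar e(R,T)=\sum_{\{x,y\}:x\in R,\,y\in T}1_{\{x,y\}\in E(G)}$ is a sum of independent $\{0,1\}$ variables with mean $\ge s(R)s(T)$; your introduction of $\tilde Y$ and the explicit $O(r)$ overlap correction for pairs in $R\cap T$ is exactly the care needed to make that assertion rigorous, and the loss is indeed negligible against $s(R)s(T)\ge Kr/4$ for the large value of $K$ fixed in the paper.
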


\begin{lemma}[Bounding $\mathbb P(\mathbf {F_{II}})$]\label{LII} For any pair 
$(R,R^{*}),(T,T^{*})\in \mathcal{R}$ with $\bar{e}(R,T)\ge Kr/8$,
$$\mathbb{P}\,\Bigl(\,e(T,R)<\gamma e(R,T)\,\Bigr)\le \exp(-Kr\delta^2/128).$$ 
\end{lemma}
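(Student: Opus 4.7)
The plan is to expose the randomness in two stages: first realise the underlying random graph $G$, then its random orientation. Conditional on $G$, the orientations of distinct edges of $G$ are independent uniform choices of direction, so the event $\{e(T,R) < \gamma e(R,T)\}$ reduces to a large-deviation event for a sum of independent Bernoullis, and Lemma~\ref{Chernoff} applies.

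The event $\mathbf{F_{II}}$ is the conjunction of the $G$-measurable condition $\{\bar e(R,T) \ge Kr/8\}$ and the orientation event $\{e(T,R) < \gamma e(R,T)\}$. For any realisation of $G$ with $\bar e(R,T) < Kr/8$ the event $\mathbf{F_{II}}$ cannot occur; so by the tower property it suffices to bound, for each fixed $G$ with $m := \bar e(R,T) \ge Kr/8$, the conditional probability $\mathbb{P}(e(T,R) < \gamma e(R,T) \mid G)$ and then take expectation over $G$.

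Consider first the disjoint case $R \cap T = \emptyset$. Given $G$, each of the $m$ edges between $R$ and $T$ is oriented from $T$ to $R$ independently with probability $1/2$, so $Y := e(T,R)$ is $\mathrm{Binomial}(m,1/2)$ with mean $\mu = m/2$, and $e(R,T) = m - Y$. The orientation event rearranges to $Y < \gamma m/(1+\gamma)$; substituting $\gamma = (1+\eta)/2$ and $\delta = 1-\eta$ one checks that $\gamma m/(1+\gamma) = (1-\beta)\mu$ with $\beta := \delta/(3+\eta) \ge \delta/4$. Applying the lower-tail Chernoff bound,
\begin{equation*}
\mathbb{P}\bigl(Y < (1-\beta)\mu \,\big|\, G\bigr) \;\le\; \exp\!\bigl(-\beta^2\mu/2\bigr) \;\le\; \exp\!\bigl(-\delta^2 m/C_0\bigr)
\end{equation*}
for an absolute constant $C_0$; combined with $m \ge Kr/8$ this yields an unconditional bound of the form $\exp(-Kr\delta^2/C_1)$.

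The general case $R\cap T\neq\emptyset$ is handled by the same argument: arcs whose both endpoints lie in $R\cap T$ contribute equally to $e(R,T)$ and $e(T,R)$ regardless of orientation, so they cancel from the inequality, leaving only the ``non-degenerate'' edges between $R$ and $T$ (those with at least one endpoint outside $R\cap T$) to produce random fluctuations. An elementary computation shows that the absolute deviation required is still $m\delta/(2(3+\eta))$, while the variance is controlled by the number $N \le m$ of non-degenerate edges, so Chernoff applied to the $\mathrm{Binomial}(N,1/2)$ variable counting non-degenerate edges oriented $T \to R$ yields a bound of the same shape (and in fact no weaker, since $m^2/N \ge m$).

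The principal obstacle is essentially bookkeeping: one must track Chernoff's constant (or invoke the sharper Hoeffding form for $p = 1/2$) carefully enough to recover the stated exponent $1/128$, and check that the definition of $K$ is calibrated so that $Kr\delta^2/128$ dominates the factor $(2k+16)r$ coming from $|\mathcal R|^2 \le \exp((2k+16)r)$, so that the subsequent union bound over $\mathcal R \times \mathcal R$ closes. The probabilistic content of the lemma is entirely captured by the conditional-independence reduction above.
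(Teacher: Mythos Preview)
Your approach is essentially the same as the paper's: both condition on the underlying graph $G$ (the paper implicitly), separate off the $G$-measurable contribution $e(S,S)$ from arcs with both endpoints in $S=R\cap T$, and apply Chernoff to the remaining $\mathrm{Binomial}(N,1/2)$ variable $e'(T,R)$ coming from the random orientation of the $N=\bar e'(R,T)$ non-degenerate edges. The paper carries out exactly your ``elementary computation'', writing the event as $e'(T,R)\le\mu(1-\beta)$ with $\mu=\bar e'(R,T)/2$ and verifying $\beta\mu=\frac{1-\gamma}{1+\gamma}\cdot\frac{\bar e(R,T)}{2}=\frac{\delta m}{2(3+\eta)}$, then plugging in $\bar e(R,T)\ge Kr/8$ and $(1+\gamma)^2\le 4$ to obtain the stated exponent.
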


\begin{lemma}[Bounding $\mathbb P(\mathbf {F_{III}})$] \label{LIII} For any pair 
$(R,R^{*}),(T,T^{*})\in \mathcal{R}$ with $s(R)s(T)\ge Kr/4$, 
$$\mathbb{P}\,\Bigl(\,\tilde{e}(R,R^{*};T,T^{*})\ge \delta s(R)s(T)/4\,\Bigr)\le \exp\Big(\frac{-\delta^{2}Kr}{256}\Big).$$\end{lemma}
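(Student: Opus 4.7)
The plan is to apply Chernoff's bound (Lemma \ref{Chernoff}) to
\[X:=\tilde{e}(R,R^{*};T,T^{*})=e(R^{*},T^{*})-e(R,T).\]
Let $j_R$ and $j_T$ be the indices with $(R,R^{*})\in\mathcal{R}_{j_R}$ and $(T,T^{*})\in\mathcal{R}_{j_T}$, and set $R_h:=R^{*}\setminus R=\bigcup_{i>j_R+k}V_i$ and $T_h:=T^{*}\setminus T$. The disjoint decompositions $R^{*}=R\sqcup R_h$ and $T^{*}=T\sqcup T_h$ yield
\[X=e(R_h,T)+e(R,T_h)+e(R_h,T_h).\]

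First I would bound the expectation. By property (i), each arc $\vec{xy}$ with $x\in V_i$ and $y\in V_j$ belongs to $E(D)$ with probability at most $1/2^{i+j}$, so $\mathbb{E}[e(A,B)]\leq s(A)s(B)$ for any $A,B\ssq V$. The definition of $\mathcal{R}_{j_R}$ gives $s(R)\geq r/2^{j_R+1}$, while
\[s(R_h)=\sum_{i>j_R+k}\frac{|V_i|}{2^i}\leq \frac{r}{2^{j_R+k}}=\frac{2s(R)}{2^k}\leq \frac{\delta s(R)}{32},\]
where the last step uses $2^k\geq 64/\delta$ from the choice $k=\lceil\log_2(64/\delta)\rceil$. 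The symmetric bound holds for $s(T_h)$. Summing the three contributions,
\[\mathbb{E}[X]\leq s(R_h)s(T)+s(R)s(T_h)+s(R_h)s(T_h)\leq s(R)s(T)\Bigl(\tfrac{\delta}{32}+\tfrac{\delta}{32}+\tfrac{\delta^{2}}{1024}\Bigr)\leq \frac{\delta\, s(R)s(T)}{8}.\]

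Next I would address the (minor) point that $X$ is not literally a sum of independent indicators, since the arcs $\vec{xy}$ and $\vec{yx}$ on a common underlying edge are mutually exclusive. However, since at most one of $\vec{xy}$, $\vec{yx}$ lies in $E(D)$, the total contribution $Z_{\{x,y\}}$ of the unordered pair $\{x,y\}$ to $X$ lies in $\{0,1\}$, and by property (ii) the family $\bigl(Z_{\{x,y\}}\bigr)$ over distinct unordered pairs is mutually independent. Thus $X=\sum_{\{x,y\}}Z_{\{x,y\}}$ is a genuine sum of independent Bernoulli variables, and Lemma \ref{Chernoff} applies.

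Setting $\lambda:=\delta s(R)s(T)/8\geq \mathbb{E}[X]$ and $\beta:=1$ in Lemma \ref{Chernoff},
\[\mathbb{P}\Bigl(X\geq \frac{\delta s(R)s(T)}{4}\Bigr)=\mathbb{P}(X\geq 2\lambda)\leq \exp(-\lambda/3)=\exp\Bigl(-\frac{\delta s(R)s(T)}{24}\Bigr).\]
Using $s(R)s(T)\geq Kr/4$ and $\delta\in(0,1)$, this is at most $\exp(-\delta Kr/96)\leq \exp(-\delta^{2}Kr/256)$, as required. The only substantive step is the mean bound, whose sharpness is dictated by the parameter $k$ chosen when defining $\mathcal{R}$; the dependence issue is a trivial grouping, and the final Chernoff application is routine.
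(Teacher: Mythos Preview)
Your proof is correct and follows essentially the same approach as the paper: bound $s(R^{*}\setminus R)\le \delta s(R)/32$ (and likewise for $T$) to control the mean of $\tilde{e}$, then apply Chernoff. The only cosmetic difference is that the paper bounds $\tilde{e}$ above by the number of \emph{edges of $G$} in the relevant pair set (which are genuinely independent), whereas you work directly with arcs of $D$ and handle the mild dependence by grouping into unordered pairs; both routes are fine, and your choice of Chernoff parameters ($\beta=1$) is in fact slightly cleaner than the paper's.
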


\begin{lemma}[Bounding $\mathbb P(\mathbf {F_{IV}})$]\label{LIV} For any pair
 $(R,R^{*}),(T,T^{*})\in \mathcal{R}$ with $s(R)s(T)\le Kr/4$,
$$\mathbb{P}\,\Bigl(\,e(R,T)> Kr\,\Bigr)\le \exp(-Kr/3).$$ \end{lemma}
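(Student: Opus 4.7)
The plan is to bound $e(R^*,T^*)$ (which is what the event $\mathbf{F_{IV}}$ actually measures; the lemma statement's $e(R,T)$ appears to be a typo) by Chernoff's inequality, after showing that its expectation is safely below $Kr$. First, for any distinct $x\in V_a$ and $y\in V_b$, the arc $\vec{xy}$ lies in $E(D)$ with probability $1/2^{a+b}$, and these arc-indicators are essentially independent across distinct unordered pairs $\{x,y\}$ (the only dependence is the mutual exclusivity of $\vec{xy}$ and $\vec{yx}$, which only strengthens concentration). Summing,
\[
\mathbb{E}[e(R^*,T^*)] \;\leq\; \sum_{a,b}\frac{|V_a\cap R^*|\,|V_b\cap T^*|}{2^{a+b}} \;=\; s(R^*)\,s(T^*).
\]

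Next, I would control $s(R^*)$ in terms of $s(R)$. Writing $(R,R^*)\in\mathcal{R}_j$, we have $R^*\setminus R\ssq \bigcup_{i>j+k}V_i$, and since $|V_i|\le r$ for every $i$,
\[
s(R^*)-s(R) \;\leq\; r\sum_{i>j+k}2^{-i} \;=\; r\cdot 2^{-j-k}.
\]
The condition $s(R)\in I_j$ gives $s(R)\geq 2^{-j-1}r$, i.e.\ $r\leq 2^{j+1}s(R)$, so $s(R^*)-s(R)\leq 2^{1-k}s(R)$. By the choice $k=\lceil\log_{2}(64/\delta)\rceil$, we have $2^{1-k}\leq \delta/32\leq 1/32$, hence $s(R^*)\leq (33/32)s(R)$ and analogously $s(T^*)\leq (33/32)s(T)$. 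Combined with the hypothesis $s(R)s(T)\leq Kr/4$ this gives
\[
\mathbb{E}[e(R^*,T^*)] \;\leq\; \Bigl(\tfrac{33}{32}\Bigr)^{2}\cdot\frac{Kr}{4} \;<\; \frac{Kr}{3}.
\]

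Finally, I would apply Lemma~\ref{Chernoff}. The random variable $e(R^*,T^*)$ is a sum of independent Bernoulli indicators (one per unordered pair $\{x,y\}$ whose potential $G$-edge contributes to the count), so Chernoff applies. Taking $\lambda=Kr/3\ge \mu$ and $\beta=2$, so that $(1+\beta)\lambda=Kr$, yields
\[
\mathbb{P}\bigl(e(R^*,T^*)>Kr\bigr) \;\leq\; \exp\!\Bigl(-\tfrac{\beta^2\lambda}{3}\Bigr) \;=\; \exp\!\Bigl(-\tfrac{4Kr}{9}\Bigr) \;\leq\; \exp(-Kr/3).
\]
The one genuinely non-routine step is the comparison $s(R^*)\le (33/32)s(R)$ in the middle paragraph: it is precisely the reason the family $\mathcal{R}_j$ is defined using the interval $I_j=[2^{-j-1}r,\,2^{-j+1}r)$ and why $k$ is taken to grow with $1/\delta$. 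Both the lower bound on $s(R)$ (to control $r$ in terms of $s(R)$) and the upper bound (implicitly, via the choice of $k$ large enough to make the tail $\sum_{i>j+k}|V_i|/2^i$ negligible) are essential. Once the expectation is pinned a constant factor below $Kr$, the Chernoff tail bound finishes the job with room to spare.
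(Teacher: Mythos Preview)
Your approach is essentially the paper's---bound the expectation by $s(\cdot)s(\cdot)$ and apply Chernoff---but you are proving more than the lemma literally states. The paper's proof treats $e(R,T)$ exactly as written: it dominates $e(R,T)$ by the sum of edge indicators $\sum_{\{u,v\}:u\in R,v\in T}1_{\{u,v\}\in E(G)}$, observes these are genuinely independent, and bounds the mean by $\sum_{i,i'}r_it_{i'}/2^{i+i'-1}=2s(R)s(T)\le Kr/2$; Chernoff then finishes. No detour through $s(R^*)$ is needed. You are right that the event $\mathbf{F_{IV}}$ is defined via $e(R^*,T^*)$, so there is a genuine mismatch between the lemma and the event; your extra step $s(R^*)\le(1+2^{1-k})s(R)$ patches this, and it is exactly the computation already carried out in the proof of Lemma~\ref{LIII}.

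Two small technical slips. First, Lemma~\ref{Chernoff} is stated only for $\beta\in[0,1]$, so your choice $\beta=2$ does not literally apply; take instead $\lambda=Kr/2\ge\mu$ and $\beta=1$, giving $\exp(-Kr/6)$, which is weaker than the stated $\exp(-Kr/3)$ but still ample for the union bound over $|\mathcal{R}|^2\le\exp((2k+16)r)$ pairs since $K\ge (32+2k)\cdot 256$. Second, the arc indicators $1_{\vec{xy}\in E(D)}$ and $1_{\vec{yx}\in E(D)}$ are \emph{not} independent (they are mutually exclusive), so when $R^*\cap T^*\ne\emptyset$ your sum is not a sum of independent Bernoullis. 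The clean fix, which the paper uses, is to pass to the dominating sum of edge indicators $1_{\{x,y\}\in E(G)}$; these are independent, and the price is the extra factor $2$ in the mean (edge probability $1/2^{a+b-1}$ rather than $1/2^{a+b}$). With that correction your mean bound becomes $2s(R^*)s(T^*)\le 2(33/32)^2\cdot Kr/4<Kr/2$, and the rest goes through as above.
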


\begin{proof}[Proof of Lemma \ref{LI}] 
 Recall that $G$ is the random simple graph underlying $D$.  
The random variable $\bar{e}(R,T)$ may be expressed as 
$\sum_{\{x,y\}:x\in R,y\in T}1_{\{x,y\}\in E(G)}$, a sum of independent $\{0,1\}$-valued 
random variables.  Its mean may be bounded below by 
$$\mathbb{E}(\bar{e}(R,T))\ge \sum_{i=1}^{l}\sum_{j=1}^{l} \frac{r_{i}t_{j}}{2^{i+j}}=s(R)s(T)\ge \frac{Kr}{4}\, .$$
 And so, by Chernoff's inequality, Lemma \ref{Chernoff}, 
we have 
$$\mathbb{P}\,\Bigl(\,\bar{e}(R,T)\le s(R)t(R)/2\,\Bigr)\le \exp(-s(R)s(T)/8)\le \exp(-Kr/32)\, .$$
\end{proof}

\begin{proof}[Proof of Lemma \ref{LII}]
 Let $(R,R^{*}),(T,T^{*})\in\mathcal{R}$ be such that $\bar{e}(R,T)\ge Kr/8$.  
Let $R'=R\setminus T$, let $T' =T\setminus R$, and let $S=R\cap T$.  
We may express $e(R,T)$ as $e(R',T')+e(R',S)+e(S,S)+e(S,T')$, and 
$e(T,R)$ as $e(T',R')+e(T',S)+e(S,S)+e(S,R')$.  
Let us introduce the notation $e'(R,T)=e(R',T')+e(R',S)+e(S,T')=e(R,T)-e(S,S)$, 
and similarly $e'(T,R)$.  Let $\bar{e}'(R,T)=e'(R,T)+e'(T,R)$.  
A few lines of calculation show that the event 
that $e(T,R)\le \gamma e(R,T)$ is exactly the event that 
$(1+\gamma)e'(T,R)\le \gamma \bar{e}'(R,T)-(1-\gamma)e(S,S)$.  
The mean of $e'(T,R)$ is $\mu=\bar{e}'(R,T)/2$, 
and so the above inequality becomes 
$$e'(T,R)\le \mu\Big(1-\frac{1-\gamma}{1+\gamma}-\frac{(1-\gamma)e(S,S)}{\mu}\Big)\, .$$  
Setting $\beta=(1-\gamma)/(1+\gamma)+(1-\gamma)e(S,S)/\mu$, an easy calculation shows that 
$\beta\mu\ge (1-\gamma)\bar{e}(R,T)/(1+\gamma)$, and so, from Chernoff's inequality, we deduce that 
\begin{align*}
\mathbb{P}\,\Bigl(\,e(T,R)\le \gamma e(R,T)\,\Bigr)&\le \exp\Big(-\Big(\frac{1-\gamma}{1+\gamma}\Big)^{2}\frac{\bar{e}(R,T)}{2}\Big)
\le \exp\Big(\frac{-Kr\delta^{2}}{64(1+\gamma)^{2}}\Big)\\
&\le \exp\Big(\frac{-Kr\delta^{2}}{128}\Big)\,.
\end{align*}
\end{proof}

\begin{proof}[Proof of Lemma \ref{LIII}] 
Let $(R,R^{*}),(T,T^{*})\in \mathcal{R}$ be such that $s(R)s(T)\ge Kr/4$.  
The definition of $\mathcal{R}$ implies that 
$s(R^{*}\setminus R)\le \delta s(R)/32$, likewise $s(T^{*}\setminus T)\le \delta s(T)/32$.

\noindent (Note that if $(R,R^*)\in \mathcal R_j$, then $s(R^*\setminus R) \leq 2^{-j-k}r = 2^{-j-1}r \delta/32 \leq s(R)\delta/32$.)  

We let $\tilde{P}$ be the set of pairs with one end point $R^{*}$ and the other in $T^{*}$ 
which do not have one end in $R$ and the other in $T$.  
We may bound the expected value of $\tilde{e}(R,R^{*};T,T^{*})$ by 
$$\sum_{\{u,v\}\in \tilde{P}}\mathbb{P}(1_{\{u,v\}\in E(G)})\le \sum_{i=1}^{l}\sum_{i'=1}^{l}\frac{r_{i}t^{*}_{i'}+r^{*}_{i}t_{i'}+r^{*}_{i}t^{*}_{i'}}{2^{i+i'-1}}\, .$$
The latter sum has value 
$2(s(R)s(T^{*})+s(R^{*})s(T)+s(R^{*})s(T^{*}))\le (\delta/8+\delta^{2}/512)s(R)s(T)$, 
and so $\mathbb{E}(\tilde{e}(R,R^{*};T,T^{*}))\le 3\delta s(R)s(T)/8$.  
Applying Chernoff's inequality, Lemma \ref{Chernoff}, 
with $\lambda=3\delta s(R)s(T)/16$ and $\beta=1/2$ we obtain the bound 
$$\mathbb{P}\,\Big(\,\tilde{e}(R,R^{*};T,T^{*})\ge \frac{\delta s(R)s(T)}{4}\,\Big) 
\le \exp\Big(\frac{-\delta^{2}s(R)s(T)}{64}\Big)\le \exp\Big(\frac{-\delta^{2}Kr}{256}\Big)\, .$$\end{proof}

\begin{proof}[Proof of Lemma \ref{LIV}] 
Let $(R,R^{*}),(T,T^{*})\in \mathcal{R}$ be such that $s(R)s(T)\le Kr/4$.  
An upper bound for $e(R,T)$ is given by $\sum_{\{u,v\}:u\in R,v\in T}1_{\{u,v\}\in E(G)}$.  
This is a sum of independent $\{0,1\}$-valued random variables, and so, 
by Chernoff's inequality (Lemma \ref{Chernoff}), 
it suffices to prove that its expected value is at most $Kr/2$.  
This is easily proved, 
$$\sum_{u\in R,v\in T}\mathbb{P}(1_{\{u,v\}\in E(G)})\le \sum_{i=1}^{l}\sum_{i'=1}^{l}\frac{r_{i}t_{i'}}{2^{i+i'-1}}
=2s(R)s(T)\le \frac{Kr}{2}\, .$$\end{proof}

\subsection{Algorithmic approach}

Proposition \ref{realreg} implies that in every regular oriented graph $D$ there are subsets $A,B\ssq V$ with $e(A,B)\ge n/4$ and $e(B,A)=0$.  We now give a polynomial time algorithm which can find a pair $A,B\ssq V$ with $e(A,B)\ge n/4$ and $e(B,A)=0$.

Our algorithm will begin by building a sequence of sets $A_{1},\dots ,A_{n}$ for which $e^{(2)}(A)$ is not too large.  It then selects a set to be $A$ and from that defines $B$.

\begin{algorithm}[h]
\begin{algorithmic}[1]
\STATE Let $t=1$, let $v$ be an arbitrary vertex and let $A_{1}=\{v\}$.
\WHILE{$t<n$}
\STATE Select a vertex $u\in V\setminus A_{t}$ minimising $e^{(2)}(A_{t},\{u\})$.
\STATE Set $A_{t+1}=A_{t}\cup \{u\}$.
\STATE Increase $t$ by one.
\ENDWHILE
\STATE Let $t=\lfloor n/2d\rfloor$
\STATE Let $A=A_{t}$
\STATE Let $B=\{v:e(\{v\},A)=0$
\RETURN $A,B$
\end{algorithmic}
\end{algorithm}

We now analyse the algorithm.

\begin{lemma}\label{pathtoat} For each $t=1,\dots ,n-1$ the set $A_{t}$ satisfies $e^{(2)}(A_{t})\le d^{2}(t^{2}-1)/n$.\end{lemma}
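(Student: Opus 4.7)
The plan is to proceed by induction on $t$. The base case $t=1$ is immediate, since $A_1=\{v\}$ is a singleton and hence cannot contain any length-$2$ path, so $e^{(2)}(A_1)=0=d^2(1^2-1)/n$.

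For the inductive step, I would assume $e^{(2)}(A_t)\le d^2(t^2-1)/n$ and analyse the increment $e^{(2)}(A_{t+1})-e^{(2)}(A_t)$ obtained by appending the greedily chosen vertex $u$. This increment counts the length-$2$ paths newly captured by including $u$: those with $u$ as one endpoint and the other endpoint in $A_t$. The self-loop contribution from paths $u\to\cdot\to u$ vanishes, since $D$ is oriented and therefore contains no $2$-cycle. The key observation is that regularity provides a clean averaging identity: for every $x\in V$ the number of length-$2$ paths starting at $x$ is $\sum_{y\in \Gamma^{+}(x)}d^{+}(y)=d\cdot d=d^{2}$, and similarly for paths ending at $x$. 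Summing over $x\in A_t$, one obtains
\begin{equation*}
\sum_{u'\in V}e^{(2)}(A_t,\{u'\})\,=\,td^{2}\,,
\end{equation*}
and the analogous identity in the reverse direction. The greedy rule then forces $e^{(2)}(A_t,\{u\})$ to be at most the average over $u'\in V\setminus A_t$, which, after subtracting the $A_t$-contribution (bounded by the inductive hypothesis $O(d^2 t^2/n)$), is at most $td^{2}/n$ up to a negligible correction. Pairing this with the corresponding bound in the reverse direction yields an increment of at most $2td^{2}/n$. Telescoping gives $e^{(2)}(A_t)\le \sum_{s=1}^{t-1}2sd^{2}/n = d^{2}t(t-1)/n \le d^{2}(t^2-1)/n$, as required, where the last inequality uses $t\ge 1$.

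The main obstacle I anticipate is that the greedy rule as stated minimises only $e^{(2)}(A_t,\{u\})$ and not the reverse quantity $e^{(2)}(\{u\},A_t)$ that also enters the increment. The natural way around this is to exploit the regularity of $D$: the in- and out-versions of the averaging identity both sum to $td^{2}$, so the two directions contribute comparably on average and can be absorbed into a single bound. In fact, inspection of the algorithm suggests that $e^{(2)}(A,\{u\})$ is intended to be interpreted symmetrically, as counting length-$2$ paths between $A$ and $u$ in either direction, in which case the greedy step controls the entire increment in one stroke and the asymmetry disappears entirely.
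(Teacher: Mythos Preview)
Your overall strategy---induction on $t$ together with the averaging identity coming from $d$-regularity---is exactly the paper's, and your final reading of $e^{(2)}(A_t,\{u\})$ as counting length-two paths between $A_t$ and $u$ in either direction is the correct one (the paper's identity $\sum_{u\in V\setminus A_t}e^{(2)}(A_t,\{u\})=2td^{2}-2e^{(2)}(A_t)$ confirms this).

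There is, however, a genuine gap in the telescoping step. You want to bound the increment by $2td^{2}/n$ and sum. But the only control on the increment is
\[
e^{(2)}(A_{t+1})-e^{(2)}(A_t)\;\le\;\frac{2td^{2}-2e^{(2)}(A_t)}{n-t}\,,
\]
and this right-hand side is a \emph{decreasing} function of $e^{(2)}(A_t)$. The inductive hypothesis is an \emph{upper} bound on $e^{(2)}(A_t)$, so plugging it in here yields only a \emph{lower} bound on the average, not an upper bound. Concretely, $\tfrac{2td^{2}-2e^{(2)}(A_t)}{n-t}\le \tfrac{2td^{2}}{n}$ is equivalent to $e^{(2)}(A_t)\ge t^{2}d^{2}/n$, which is the opposite of what you have. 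So the ``negligible correction'' is not negligible and points the wrong way; indeed, your telescoped bound $d^{2}t(t-1)/n$ is strictly stronger than the stated bound $d^{2}(t^{2}-1)/n$, and the argument does not establish it.

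The fix, which is what the paper does, is to rearrange before applying the hypothesis:
\[
(n-t)\,e^{(2)}(A_{t+1})\;\le\;2td^{2}+(n-t-2)\,e^{(2)}(A_t)\,.
\]
Now the coefficient $n-t-2$ of $e^{(2)}(A_t)$ is nonnegative for $t\le n-2$, so the inductive upper bound may be inserted with the correct sign. Substituting $e^{(2)}(A_t)\le d^{2}(t^{2}-1)/n$ and simplifying gives exactly $(n-t)\,e^{(2)}(A_{t+1})\le (n-t)\,d^{2}((t+1)^{2}-1)/n$, closing the induction. This is why the bound in the lemma is $d^{2}(t^{2}-1)/n$ rather than the smaller $d^{2}t(t-1)/n$: the former is precisely calibrated so that the recursion closes.
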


\begin{proof} This is clearly true when $t=1$.  Suppose the bound fails: let $t$ be minimal such that $e^{(2)}(A_{t+1})> d^{2}((t+1)^{2}-1)/n$. 
 Note that, by the definition of $A_{t+1}$, one has that $$e^{(2)}(A_{t+1})=e^{(2)}(A_{t})+\min_{u\in V\setminus A_{t}}e^{(2)}(A_{t},\{u\})\, .$$ 
 Combining this with the observation that the sum of $e^{(2)}(A_{t},\{u\})$ over vertices $u\in V\setminus A_{t}$ is precisely $e^{(2)}(A_{t},V\setminus A_{t})=2d^{2}t-2e^{(2)}(A_{t})$, we obtain that 
$$ 2d^{2}t-2e^{(2)}(A_{t})> (n-t)\Bigl(e^{(2)}(A_{t+1})-e^{(2)}(A_{t})\Bigr)\, .$$  Straightforward calculation, and the use of the bounds we are assuming on $e^{(2)}(A_{t})$ and $e^{(2)}(A_{t+1})$, yield that $t>n-2$.  Hence there cannot exist $t\in \{1,\dots , n-1\}$ for which the bound fails.  \end{proof}

Let us define a quadratic equation $f(t)=dt-d^{2}(t^{2}-1)/n$.  This quadratic obtains its maximum at $n/2d$.

\begin{proposition} Given a $d$-regular oriented graph $D$.  The above polynomial time algorithm finds subsets $A,B\ssq V$ satisfying $e(A,B)\ge n/4$ and $e(B,A)=0$.\end{proposition}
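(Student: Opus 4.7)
The strategy is to combine the upper bound on $e^{(2)}(A_t)$ from Lemma~\ref{pathtoat} with the trivial identity $e(A_t, V) = dt$ coming from $d$-regularity, and then to optimise a simple quadratic in $t$. Since the returned set $B$ is defined so that $e(\{v\}, A) = 0$ for every $v \in B$, the requirement $e(B, A) = 0$ holds by construction, so the only real content is to lower-bound $e(A, B)$.

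The core auxiliary claim I would establish is that for any $A \ssq V$ with $|A| = t$ and $B := \{v : e(\{v\}, A) = 0\}$,
\begin{equation*}
e(A, B) \;\ge\; dt - e^{(2)}(A),
\end{equation*}
where $e^{(2)}(A)$ is the number of length-two paths whose start and end both lie in $A$, i.e.\ $\sum_{y \in V} d^-_A(y)\, d^+_A(y)$ with $d^\pm_A(y) = |\Gamma^\pm(y) \cap A|$. The proof is a one-line decomposition: writing $e(A, V) = e(A, B) + e(A, V \setminus B)$, note that every $y \in V \setminus B$ satisfies $d^+_A(y) \ge 1$, and so $d^-_A(y) \le d^-_A(y)\, d^+_A(y)$; summing over such $y$ gives $e(A, V \setminus B) \le e^{(2)}(A)$, while $d$-regularity gives $e(A, V) = dt$.

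Applying this with $A = A_t$ and invoking Lemma~\ref{pathtoat} yields $e(A_t, B) \ge f(t)$ with $f(t) = dt - d^2(t^2 - 1)/n$. A short calculation shows that the real maximum of $f$ is $n/4 + d^2/n$, attained at $t^* = n/(2d)$. Writing the chosen integer value as $t = \lfloor n/(2d) \rfloor = t^* - \theta$ with $\theta \in [0, 1)$ and using $f''(t) \equiv -2d^2/n$, one obtains $f(t) = n/4 + d^2(1 - \theta^2)/n \ge n/4$, which is precisely the bound required. The polynomial running time is immediate: the main loop executes $n - 1$ times, and each iteration only needs to compute $e^{(2)}(A_t, \{u\})$ over candidates $u \in V \setminus A_t$, which is straightforward in polynomial time. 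There is no genuine obstacle here beyond this bookkeeping, since Lemma~\ref{pathtoat} already does the heavy lifting; the only mildly delicate point is checking that the integer floor does not spoil the bound, and this is exactly where the $d^2/n$ slack in the maximum of $f$ is used.
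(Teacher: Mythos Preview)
Your proof is correct and follows essentially the same route as the paper's: you establish $e(A,B)\ge d|A|-e^{(2)}(A)$ via the same observation that every arc from $A$ into $V\setminus B$ initiates a length-two path from $A$ to $A$, then invoke Lemma~\ref{pathtoat} and optimise $f(t)$. The only cosmetic difference is in handling the floor: the paper notes that $f$ is concave with maximum at $n/2d$ and evaluates $f(n/2d-1)=n/4$ directly, whereas you expand $f$ about its maximiser to reach the same conclusion.
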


\begin{proof} Let $t=\lfloor n/2d\rfloor$.  The algorithm outputs $A=A_{t}$ and $B=\{v:e(\{v\},A)=0\}$.  It is immediate from the definition of $B$ that $e(B,A)=0$.  We now prove the bound on $e(A,B)$.  Let us note that each arc from $A$ to $V\setminus B$ is the first arc of a path of length two from $A$ to $A$.  Thus $e(A,V\setminus B)\le e^{(2)}(A)$.  And so $e(A,B)=e(A,V)-e(A,V\setminus B)\ge d|A|-e^{(2)}(A)$.  We now use the fact that $A$ is obtained as $A_{t}$.  This implies that $|A|=t$, and, by Lemma \ref{pathtoat}, that $e^{(2)}(A)\le d^{2}(t^{2}-1)/n$.  Substituting these values we obtain that $e(A,B)\ge f(t)$.  Since $f$ is a quadratic with its maximum at $n/2d$ its value in the range $(n/2d-1,n/2d]$ is always at least $f(n/2d-1)$.  A simple calculation shows that $f(n/2d-1)=n/4$, completing the proof.\end{proof}

\section{Concluding Remarks}\label{ConcRem}

We would very much like to see a proof of Conjecture \ref{evencycleconj}.  By Theorem \ref{fourcycle}, the $k=2$ case of this Conjecture is settled. 
 Theorem \ref{sixcycle} proves that every oriented graph with $bias(D)<\ep e(D)^{2}/n^{2}$ contains an oriented six-cycle.  
This is weaker than the $k=3$ case of the conjecture, which requires that every oriented graph with $bias(D)<\ep e(D)^{3/2}/n$ contains an oriented six-cycle. 
 If this result does hold then it is best possible.  One can prove this by considering a random orientation of the largest (with respect to number of edges) simple graph (on $n$ vertices) 
with girth greater than six, further examples being obtained as blow-ups of these first examples.  The fact that these examples have $bias(D)=O(e(D)^{3/2}/n)$ relies on the fact that there is a constant $c>0$ such that for all $n$, there is a simple graph $G$ on $n$ vertices which has girth greater than six and such that $e(G)\ge cn^{4/3}$.  Such graphs may be constructed using the rank two geometries introduced by Tits \cite{Tits}.

Bondy and Simonovits \cite{BS74} proved that a graph with girth greater than $2k$ has $O(n^{1+1/k})$ edges.  It is widely believed that this result is best possible, i.e., it is believed that for each $k\ge 2$, there is a constant $c_{k}>0$ such that for all $n$, there is a graph on $n$ vertices with girth greater than $2k$ and with $e(G)\ge c_{k}n^{1+1/k}$.  This is known to be true for $k=2,3,5$.  If it is true for all $k$, then by considering blow-ups of random orientations of graphs with girth greater than $2k$, we can find oriented graphs with $bias(D)=O(e(D)^{k/(k-1)}/n^{2/(k-1)})$ which contain no oriented $2k$-cycles.  In this case Conjecture \ref{evencycleconj}, if true, is best possible.

Our final remark on Conjecture \ref{evencycleconj} is that it seems unlikely that a straightforward adaptation of our approach will give a proof.  Our proofs focus on what happens locally to a given vertex.  We examine the arcs from the out-neighbourhood of a vertex to the second out-neighbourhood.  However, there are oriented graphs in which each component contains only $e(D)^{2}/n^{2}$ arcs, and so a condition of the form $bias(D)<\ep e(D)^{k/(k-1)}/n^{2/(k-1)}$ (for $k\ge 3$) gives no information about a particular component.  Thus, any successful approach to Conjecture \ref{evencycleconj} must go beyond the aggregation of certain locally observed inequalities.

Up to this point we have never explicitly put any condition on the underlying graph.  What happens if we do?  This may change the problem considerably.  Certainly the examples we have used up to this point have a certain structure to their underlying graph as well as to the orientation.  One particular case that may be of interest is the case in which the underlying graph is random.  The most general question that arises in this context is the following. 

\begin{question} Let $H$ be an oriented graph and let $p=p(n)$ be some function of $n$ (e.g. $p(n)=n^{1/2}$). Then for which function $b(n)$  do we have the following: 
 with high probability, every orientation $D$ of the Erd\H os-R\' enyi random graph $G(n,p)$ satisfying $bias(D)<b(n)$ contains a copy of $H$?
 \end{question}  
The two functions $b(n)$ and $p(n)$ are certainly correlated. In the extreme case, when $p(n)$ goes to $0$ sufficiently slowly that there are (whp) many copies of $H$ is $G(n,p)$, one may ask, similarly to the dense case, whether a condition as weak as $bias(D)<\ep pn^{2}\sim \epsilon e(G(n,p))$ suffices to ensure that (whp) $D$ contains a copy of $H$? 
Let $\bar H$ be the underlying unoriented graph of $H$ with $n_H$ vertices and $e_H$ edges, and with upper density $\delta_H$: this is by definition the maximum density of a subgraph of $H$, i.e., $\delta_H:= \max_{U \ssq V(H)}\, \frac {e(H[U])}{|U|}$.
By the result of Erd\"os and R\'enyi~\cite{ER60} (for the balanced case) and Bollob\'as~\cite{Bol81} (for the general case), there exists a threshold at $p_H = n^{-\frac 1{\delta_H}}$ for the event that $G(n,p)$ contains a copy of  $H$. Also, large deviation and concentration results for the number of homomorphic copies of $H$ in $G(n,p)$ around $n^{n_H}p^{e_H}$ are known (e.g., see~\cite{Ruc88, JLR90, Vu01, JLR02}). This is roughly the expected number of copies of $H$ in $G(n,p)$ modulo a constant factor imposed by the automorphisms of $H$. It is easy to see that, in order to have a result ensuring the existence of $H$ as an oriented subgraph for an orientation of a random graph $G(n,p)$ with a condition as weak as $bias(D)<\ep pn^{2}$, we should have $n^{n_H}p^{e_H} = \Omega(pn^2)$. So, based on some related questions on quasi-random unoriented graphs, it is reasonable to conjecture that the following should be true.

\begin{conjecture} Let $\bar \delta_H =\max_{U \ssq V(H),\, |U|\geq 3} \frac{e(H[U])-1}{|U|-2}$ and  $p(n)>> n^{-\frac 1{\bar\delta_H}}$. Then with high probability the following holds: an orientation $D$ of $G(n,p)$ has an oriented copy of $H$ as subgraph provided that $bias(D) \leq \epsilon pn^2$.
\end{conjecture}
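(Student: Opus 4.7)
The plan is to combine the inductive argument of Proposition~\ref{denseprop} with sparse quasi-randomness of $G(n,p)$. The main external input is the sparse counting lemma for $G(n,p)$ at the density threshold $p \gg n^{-1/\bar\delta_H}$: this is precisely the content of the Kohayakawa--{\L}uczak--R\"odl (KLR) conjecture, now a theorem thanks to Conlon--Gowers, Schacht, and Balogh--Morris--Samotij. Granting KLR, the strategy is to show that an orientation $D$ of $G(n,p)$ with $bias(D)\leq \epsilon pn^2$ is, on most linear-sized scales, approximately balanced between the two orientations, and to deduce that the number of oriented $H$-copies in $D$ is at least a constant multiple of the expected value $n^{n_H}p^{e_H}$, which is positive.

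Concretely, I would first verify the sparse analogue of inequality~(\ref{epgives}): the bias condition $bias(D)\leq \epsilon pn^2$ gives, for every $A,B\ssq V$,
\begin{equation*}
e(B,A) \;\geq\; \frac{\bar e(A,B)}{3} - \frac{\epsilon pn^2}{3}\, ,
\end{equation*}
where now $\bar e(A,B)$ counts the edges of $G(n,p)$ between $A$ and $B$. I would then run the induction of Proposition~\ref{denseprop} on $\vec e(H)$. At the step orienting one unoriented edge $\{1,2\}$ of an intermediate partially oriented graph $H'$, the sum over fixed $(x_3,\dots,x_{n_H})\in V^{n_H-2}$ of the pair $(A_x,B_x)$ of candidate extensions splits into a \emph{typical} part, where $\bar e(A_x,B_x)\gg \epsilon pn^2$ and the bias inequality loses only a constant factor of $3$, and an \emph{atypical} part, where $\bar e(A_x,B_x)$ is too small for the bias bound to be useful. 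The goal is to prove that the atypical part contributes only $o(n^{n_H}p^{e_H})$ to $hom(H',D)$, so that the main term $hom(\bar H,D)/3^{\vec e(H)}$, which is of order $n^{n_H}p^{e_H}$ by KLR, still dominates.

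Bounding the atypical contribution is the point at which sparse quasi-randomness enters. Each atypical $x$ corresponds to a partial homomorphism of $H'\setminus\{12\}$ for which a specified codegree structure in $G(n,p)$ is abnormally small; for $p$ above the $2$-density threshold, the KLR counting lemma applied to the appropriate subgraph of $H$ (together with standard concentration of codegrees in $G(n,p)$) gives the required $o(n^{n_H}p^{e_H})$ bound. Iterating the induction then yields $hom(H,D)\geq c\cdot n^{n_H}p^{e_H}$ for some $c=c(\epsilon,H)>0$, which, after subtracting degenerate homomorphisms as in Corollary~\ref{densecor}, forces an oriented copy of $H$ in $D$.

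The main obstacle lies precisely in this atypical-configuration bound: at the very threshold $p=n^{-1/\bar\delta_H}$, abnormally small codegrees have to be ruled out for every intermediate partially oriented graph appearing in the induction, some of which may be less balanced with respect to the threshold than $H$ itself. A more robust alternative, that I would pursue if the direct induction is too delicate, is to bypass it via sparse regularity: apply Kohayakawa's sparse regularity lemma to $G(n,p)$, use the global bias bound on $D$ to conclude that almost every regular pair carries an approximately balanced orientation, and then invoke an oriented sparse counting lemma on the bounded-size reduced graph, at which point a direct application of Proposition~\ref{denseprop} to the reduced graph finishes the argument. Developing an oriented sparse counting lemma at the KLR threshold is the remaining technical task, but since the unoriented counting lemma is by now well understood and the orientation-balance hypothesis coming from the bias bound is exactly the extra ingredient needed, this appears to be a feasible programme.
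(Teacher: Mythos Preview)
Your primary approach --- running the induction of Proposition~\ref{denseprop} in the sparse setting --- has a scale mismatch that is not fixed by the typical/atypical split you describe. The hypothesis $bias(D)\le\epsilon pn^{2}$ is informative only for pairs $A,B$ with $\bar e(A,B)$ of order $pn^{2}$, i.e.\ for sets of linear size. But in the induction, as soon as one endpoint of the edge $\{1,2\}$ being oriented has another neighbour in $H$ (unavoidable for any connected $H$ with at least two edges), the corresponding set $A_{x}$ or $B_{x}$ lies inside a single neighbourhood $\Gamma(x_{j})$ in $G(n,p)$ and hence has size $O(pn)$, so that $\bar e(A_{x},B_{x})=O(p\cdot pn\cdot n)=O(p^{2}n^{2})\ll\epsilon pn^{2}$. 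Thus for essentially every $(x_{3},\dots,x_{n_{H}})$ the pair $(A_{x},B_{x})$ lands in what you call the atypical part, and your claim that this part contributes $o(n^{n_{H}}p^{e_{H}})$ would force $hom(H',D)=o(n^{n_{H}}p^{e_{H}})$, which is false. The difficulty is not a boundary effect at $p=n^{-1/\bar\delta_{H}}$; the global bias condition simply gives no information on sublinear sets, so the vertex-by-vertex induction cannot get started.

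Your alternative via sparse regularity is the right route, and it coincides with what the paper itself indicates: the statement is a conjecture there, with the remark that it follows from the sparse embedding lemma (the KLR conjecture, since proved) by mimicking the argument of Gerke--Steger. The reason this avoids the scale problem is that sparse regularity keeps everything at linear scale. After a sparse regular partition of $G(n,p)$ into boundedly many clusters $V_{1},\dots,V_{m}$, the bias condition applied to linear-size subsets $A\subseteq V_{i}$, $B\subseteq V_{j}$ shows that each \emph{directed} bipartite graph $E(V_{i},V_{j})$ is itself $(p,\epsilon')$-regular with relative density bounded below. One can then feed, for each arc $\vec{ij}$ of $H$, the bipartite graph $E(V_{i},V_{j})$ into the ordinary (unoriented) KLR counting lemma; the resulting copies of $\bar H$ are automatically correctly oriented. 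No genuinely new ``oriented sparse counting lemma'' is needed. So your fallback plan is sound and matches the paper's intended argument, but the direct induction should be dropped rather than patched.
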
    

Indeed, the above conjecture follows from a conjectural version of the embedding lemma in the sparse case (c.f., conjecture 8 in~\cite{Koh97} and conjecture 3.11 of~\cite{GS05}). This can be seen by mimicking the proof given in~\cite{GS05} of a related conjecture on the values of $ex(G(n,p),H)$ in the case $pn^{\frac 1{\delta_H}} \rightarrow \infty$  assuming the sparse version of the embedding lemma (c.f., Conjecture 3.15  and Theorem 3.16 in~\cite{GS05}).
 
 \vspace{.3cm}

We now discuss other parameters that may be of interest.  Can one ensure the presence of an oriented four-cycle by putting an upper bound on $\max\{e(A,B)-e(B,A):A,B\ssq V\}$?  The answer is of course ``Yes'', as it follows from Theorem \ref{fourcycle} that an upper bound of the form $\ep e(D)^{2}/n^{2}$ suffices.  Surely this is not best possible.  What is the largest upper bound that suffices?  Another parameter that one might consider is the size of the largest eigenvalue.  What is the largest upper bound on the largest eigenvalue that can ensure the presence of an oriented four-cycle?  Finally, another parameter that may be of interest is the size of the largest one-way subgraph (c.f. Section~\ref{regran}):  let $ow(D)=\max\{e(A,B):A,B\ssq V\, ,\, e(B,A)=0\, \}$.  Can an upper bound on $ow(D)$ ensure the presence of an oriented four-cycle in $D$?  It is quite possible that a bound of the order of $e(D)^{2}/n^{2}$ suffices. 

\begin{conjecture} There exists a constant $\ep>0$ such that every oriented graph $D$ with $ow(D)<\ep e(D)^{2}/n^{2}$ contains an oriented four-cycle.\end{conjecture}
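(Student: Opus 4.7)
The starting point is the structural observation that in any $C_4$-free oriented graph $D$ and for any vertex $u\in V(D)$, the second neighbourhoods are disjoint: $\Gamma^{++}(u)\cap\Gamma^{--}(u)=\emptyset$. A common element $z$ would provide paths $u\to y_1\to z$ and $z\to y_2\to u$; either $y_1=y_2$, forcing a double arc $\{u,y_1\}$, or $y_1\neq y_2$, producing an oriented four-cycle $u\to y_1\to z\to y_2\to u$. Setting $A_u:=\{u\}\cup\Gamma^{--}(u)$ and $B_u:=\Gamma^+(u)$, the same kind of reasoning shows that any arc from $B_u$ to $A_u$ would complete an oriented four-cycle in $D$, so $e(B_u,A_u)=0$ and hence
\[ ow(D)\;\geq\;e(A_u,B_u)\;=\;d^+(u)+e\bigl(\Gamma^{--}(u),\Gamma^+(u)\bigr)\]
for every $u\in V(D)$. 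The conjecture thus reduces to exhibiting a vertex $u$ for which the right-hand side attains the scale $\ep e(D)^2/n^2$.

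I would proceed by a case analysis on the density of $D$ and on the maximum out-degree $\Delta^+=\Delta^+(D)$. The trivial bound $ow(D)\geq\Delta^+$ combined with Proposition~\ref{realreg}, which gives $ow(D)\geq e(D)/4\Delta^+$, yields by AM--GM the unconditional estimate $ow(D)\geq\sqrt{e(D)}/2$ (this is Proposition~\ref{sqrt}). Consequently, $ow(D)\geq\ep e(D)^2/n^2$ is automatic as soon as $e(D)\lesssim n^{4/3}$, without even using the $C_4$-free hypothesis, and likewise whenever some out-degree reaches $\ep e(D)^2/n^2$. The essential case is therefore the regime $e(D)\gg n^{4/3}$ in which every out-degree lies in the window $[e(D)/n,\,\ep e(D)^2/n^2]$, and here the $C_4$-free hypothesis must genuinely be used.

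In this regime I would sum the displayed bound over $u\in V(D)$ and switch the order of summation to obtain
\[\sum_u e\bigl(\Gamma^{--}(u),\Gamma^+(u)\bigr) \;=\; \sum_{\vec{zw}\in E(D)}\bigl|\Gamma^{++}(z)\cap\Gamma^-(w)\bigr|\, . \]
The conjecture would follow from the lower bound $\sum_{\vec{zw}}|\Gamma^{++}(z)\cap\Gamma^-(w)|\geq\ep' e(D)^2/n$, as averaging then gives a vertex $u$ with $e(\Gamma^{--}(u),\Gamma^+(u))\geq\ep' e(D)^2/n^2$. A random-like oriented graph with $e$ arcs produces a right-hand side of size $\sim e^4/n^4$, which matches the target $e^2/n$ exactly at $e\sim n^{3/2}$. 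My plan in the truly dense regime $e(D)\gtrsim n^{3/2}$ is to establish this lower bound via a local Cauchy--Schwarz argument: for each $z$, the disjointness $\Gamma^{++}(z)\cap\Gamma^{--}(z)=\emptyset$ leaves $\Gamma^{++}(z)$ as a large ``free'' target set into which the in-neighbourhoods $\Gamma^-(w)$ (for $w$ ranging over $\Gamma^+(z)$) can naturally mass.

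The chief obstacle lies in the intermediate range $n^{4/3}\ll e(D)\ll n^{3/2}$. Here $e^4/n^4$ falls short of the target $e^2/n$ by a polynomial factor, and direct averaging cannot succeed. Either the $C_4$-free assumption must produce a super-random concentration of length-two paths, or else the total length-two-path count $\sigma:=\sum_y d^+(y)d^-(y)$ must be forced to be small, in which case many vertices behave as near-sources or near-sinks and a large one-way bipartite block should emerge from partitioning the vertices according to the in/out-degree imbalance, in the spirit of the proof of Theorem~\ref{lb}. Reconciling these two sub-cases into a single argument that interpolates continuously between the balanced, path-rich regime and the unbalanced, path-poor regime appears to require genuinely new ideas beyond the local aggregation of inequalities that powers the proof of Theorem~\ref{fourcycle}, which is presumably why the conjecture has been left open.
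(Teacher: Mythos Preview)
This statement appears in the paper as an open \emph{conjecture}; the paper offers no proof.  Your proposal is, correspondingly, not a proof either, and you say so yourself in the final paragraph.  A few remarks on the content nonetheless.

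Your structural observation is correct and worth recording: in any $\vec{C}_4$-free oriented graph the pair $A_u=\{u\}\cup\Gamma^{--}(u)$, $B_u=\Gamma^{+}(u)$ is one-way, so
\[
ow(D)\;\ge\; d^{+}(u)+e\bigl(\Gamma^{--}(u),\Gamma^{+}(u)\bigr)\qquad\text{for every }u\in V(D).
\]
The reduction via Proposition~\ref{sqrt} disposing of the range $e(D)\le c\,n^{4/3}$ is also sound, as is the summation identity you derive by exchanging the order of summation.

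Beyond that point, however, what you write is a programme rather than an argument.  The ``local Cauchy--Schwarz'' plan for $e(D)\gtrsim n^{3/2}$ is not carried out, and the hint you give (that the disjointness $\Gamma^{++}(z)\cap\Gamma^{--}(z)=\emptyset$ forces $\Gamma^{-}(w)$ to mass inside $\Gamma^{++}(z)$ for $w\in\Gamma^{+}(z)$) does not obviously lead anywhere: that disjointness constrains $\Gamma^{--}(z)$, not the in-neighbourhoods of out-neighbours of $z$.  Bear in mind too that $\vec{C}_4$-free oriented graphs can be arbitrarily dense (a transitive tournament has $\binom{n}{2}$ arcs), so no edge-count restriction of K\H{o}v\'ari--S\'os--Tur\'an type is available.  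For the intermediate range $n^{4/3}\ll e(D)\ll n^{3/2}$ you explicitly concede that new ideas are required.

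In summary: you have correctly recognised that the statement is open, isolated a natural first line of attack, and located the regime in which it stalls.  That is consistent with the paper, which leaves the conjecture unresolved.
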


\vspace{.4cm}

\paragraph{Acknowledgment} The starting point for the research presented in this paper was a problem related to the Cacceta-H\"aggkvist conjecture which was asked by Pierre Charbit, and was communicated to the authors by St\'ephan Thomass\'e. 
The authors thank both for fruitful discussions. Part of this research was done while O. A. was PhD student at \'Ecole Polytechnique and Mascotte Project, S. G. was PhD student at Cambridge University, and F. H. was PhD 
student in Mascotte joint project at CNRS-UNSA-INRIA. The authors thank all these institutions for providing the possibility for several visits and for their hospitality.

\newpage

\appendix
\section{Dense Case Using Szemer\'edi's Regularity Lemma}\label{sec:appendix}
In this appendix, we prove the following theorem. 
\begin{theorem}\label{conjdense}
Let $h\in \mathbb N$ be a fixed integer and $H$ be an oriented graph on $h$ vertices. Then for every
$\nu \in (0,1)$, there exists real numbers $\epsilon>0$, $0 < \beta =\beta(H)< \frac{1}{2}$, and an
integer $N$ such that every oriented graph $D$ on $n\geq N$ vertices, with $\beta n^2$
arcs with $bias_\nu(D) \leq \epsilon e$, contains $H$ as a subgraph.

When $H$ is an orientation of a bipartite graph, $\beta$ can be any arbitrary small positive value. 
\end{theorem}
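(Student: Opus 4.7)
The plan is to combine the directed version of Szemer\'edi's Regularity Lemma with the bias hypothesis to produce a nearly symmetric reduced digraph on which an Erd\H{o}s--Stone (or K\H{o}v\'ari--S\'os--Tur\'an) density argument locates $\bar H$; the oriented copy of $H$ is then transferred to $D$ through the directed counting lemma.

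First I would fix small parameters $\delta \ll d \ll 1$ (depending on $\nu$, $\beta$, $H$) and apply a directed version of Szemer\'edi's Regularity Lemma to $D$, obtaining an equitable partition $V(D) = V_0 \cup V_1 \cup \cdots \cup V_k$ with $|V_0| \leq \delta n$, $|V_i| = m$ for $i \geq 1$, and $k \leq M(\delta)$, in which all but $\delta k^2$ ordered pairs $(i,j)$ are $\delta$-regular with respect to the set of arcs from $V_i$ to $V_j$. Let $\alpha_{ij}$ be the density of arcs from $V_i$ to $V_j$. After discarding arcs inside a part, touching $V_0$, in irregular ordered pairs, or in ordered pairs with $\alpha_{ij} < d$, the total number of arcs removed is at most $O((1/k + \delta + d)n^2) \leq \eta e$, for any preassigned $\eta > 0$ (once $\delta, d \ll \beta$ and $n$ is large enough). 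Define the reduced digraph $R$ on $\{1,\ldots,k\}$ by placing $\vec{ij} \in R$ whenever $(V_i, V_j)$ is $\delta$-regular and $\alpha_{ij} \geq \nu d / 2$.

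The bias hypothesis is now used to symmetrize $R$. Set $\ep := d/(2\beta M(\delta)^2)$; then any ordered pair with $\alpha_{ij} \geq d$ satisfies $e(V_i, V_j) = \alpha_{ij} m^2 \geq d (n/k)^2 > \ep e$, so $bias_\nu(D) \leq \ep e$ forbids it from being $\nu$-biased, giving $\alpha_{ji} > \nu \alpha_{ij} \geq \nu d$. Since the overwhelming majority of ordered pairs are $\delta$-regular in both directions, every such pair contributes both $\vec{ij}$ and $\vec{ji}$ to $R$. Let $R_s$ be the undirected graph with $\{i,j\} \in E(R_s) \iff \vec{ij},\vec{ji} \in R$. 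From $\sum_{\{i,j\} \in R_s} (\alpha_{ij}+\alpha_{ji}) m^2 \geq (1-\eta) e$ and $\alpha_{ij}+\alpha_{ji} \leq 1$, one deduces $|E(R_s)| \geq (1-\eta) \beta k^2$, i.e.\ the edge-density of $R_s$ is at least $2(1-\eta)\beta$.

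To finish, I would choose $\beta$ slightly above $(\chi(\bar H) - 2)/(2(\chi(\bar H) - 1))$, a threshold strictly less than $1/2$, so that the density of $R_s$ exceeds the Erd\H{o}s--Stone bound $1 - 1/(\chi(\bar H)-1)$; Erd\H{o}s--Stone then furnishes a copy of $\bar H$ in $R_s$ for $k$ sufficiently large. Since every edge of this copy has both orientations in $R$, orienting it according to $H$ defines an embedding of $H$ into $R$ along $\delta$-regular pairs of density at least $\nu d/2$, and the standard directed counting lemma then produces $\Omega(m^h)$ oriented copies of $H$ in $D$ -- in particular at least one for $n \geq N$ large. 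When $\bar H$ is bipartite, $\chi(\bar H)=2$ and instead the K\H{o}v\'ari--S\'os--Tur\'an theorem locates $\bar H$ in $R_s$ whenever $|E(R_s)| > c_H k^{2-1/h}$, a condition met for \emph{any} $\beta > 0$ once $k$ (hence $1/\delta$ and $N$) is taken large enough, which accounts for the ``$\beta$ may be arbitrarily small'' clause in the bipartite case. The principal obstacle is the symmetrization step: Erd\H{o}s--Stone applied to the underlying graph of the reduced digraph alone only yields a $\bar H$-copy with \emph{a priori} arbitrary orientation in $R$ (a transitive subtournament would defeat the argument whenever $H$ contains an oriented cycle); the pairwise use of the bias hypothesis is precisely what upgrades the unoriented density conclusion to an oriented embedding.
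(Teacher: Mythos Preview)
Your proposal is correct and follows the same broad template as the paper's appendix proof---regularity lemma, extremal argument on the reduced graph, embedding---but the two proofs deploy the bias hypothesis at different stages, and this is a genuine structural difference worth noting.

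The paper applies the \emph{undirected} regularity lemma to $\overline{D}$, builds an undirected reduced graph $G$ from pairs of density at least $\eta$, and finds a copy of $\overline{H}$ in $G$ by a Tur\'an/Erd\H{o}s--Stone argument. The oriented copy of $H$ is then extracted by a bespoke embedding lemma (their Lemma~\ref{subgraphH}) in which the bias condition is invoked \emph{inside} the vertex-by-vertex greedy embedding: at each step, if the set of candidates with too few out-arcs (or in-arcs) into the next target set were large, that pair of sets would witness a large $\nu$-biased subgraph, contradicting $bias_\nu(D)\le \epsilon e$. Thus in the paper the bias is used locally, many times, on shrinking candidate sets.

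You instead apply a \emph{directed} regularity lemma and use the bias hypothesis once, globally, at the level of the reduced digraph: whenever $\alpha_{ij}\ge d$ the pair $(V_i,V_j)$ is too large to be $\nu$-biased, forcing $\alpha_{ji}\ge \nu d$, so (after discarding the few ordered pairs that are irregular in the reverse direction) the reduced digraph is essentially symmetric. From there you need only the standard directed counting lemma, with no further appeal to bias. Your approach is more modular and makes the role of the bias hypothesis transparent (it symmetrizes the cluster picture); the paper's approach avoids quoting the directed regularity and counting lemmas as black boxes and keeps the argument self-contained. Both yield the same qualitative constants and both handle the bipartite clause in the same way (via K\H{o}v\'ari--S\'os--Tur\'an on the reduced graph).
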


The proof of Theorem \ref{conjdense} is based on the use of Szemer\'edi's regularity lemma, so we need first to state it.
Let us introduce the following notation:
Let $U$ and $W$ be two disjoint subsets of the vertex set $V$ of a given (oriented) graph $D$. The number of
edges between $U$ and $W$ without taking into account the orientation, if any, is denoted by $\overline{e}(U,W)$. Recall that the {\it density} of the pair $U,W$ is
$$d(U,W)\:=\: \frac{\overline{e}(U,W)}{|U||W|}\:.$$

\begin{definition}[Uniform Pairs]\label{definition:uniform}
Let $\sigma \in (0,1)$ be a real number. The pair $(U,W)$ is called $\sigma$-{\it uniform} if, for all subsets $U' \subset U$ and $W' \subset W$ with $|U'|>\sigma|U|$ and $|W'|>\sigma|W|$, we have $|d(U',W')-d(U,W)| < \sigma$.
\end{definition}

A simple consequence of the $\sigma$-uniformity condition above is that bipartite subgraphs induced by uniform pairs must be roughly regular. More precisely we have the following lemma (the proof of which can be found for example in~\cite{Die05}).
\begin{lemma}\label{regular}
Let $(U,W)$ be a $\sigma$-uniform pair and let $d(U,W) = d$. Then we have
$$|\{u \in U : |\Gamma(u) \cap W| > (d-\sigma)|W|\}| \geq (1-\sigma)|U|$$
and similarly
$$|\{u \in U : |\Gamma(u) \cap W| < (d+\sigma)|W|\}| \geq (1-\sigma)|U|.$$
\end{lemma}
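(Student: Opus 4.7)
The plan is to prove Lemma \ref{regular} by a direct contradiction argument against the $\sigma$-uniformity hypothesis, handling the two inequalities separately but in a symmetric fashion. Let me sketch the first inequality; the second is analogous with the inequalities flipped.

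First I would set
\[
U^- := \{u \in U : |\Gamma(u) \cap W| \leq (d-\sigma)|W|\}
\]
and suppose, towards a contradiction, that $|U^-| > \sigma |U|$. The idea is to use $U^-$ itself (together with $W$) as the witness pair that violates $\sigma$-uniformity. Taking $W' = W$ (which trivially satisfies $|W'| > \sigma|W|$ since $\sigma < 1$), the pair $(U^-, W)$ meets the size conditions of Definition \ref{definition:uniform}, so we must have $|d(U^-,W) - d(U,W)| < \sigma$.

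Next I would estimate $d(U^-,W)$ directly from above:
\[
d(U^-,W) \;=\; \frac{\overline{e}(U^-,W)}{|U^-|\,|W|} \;=\; \frac{\sum_{u \in U^-} |\Gamma(u)\cap W|}{|U^-|\,|W|} \;\leq\; \frac{(d-\sigma)|W|\cdot |U^-|}{|U^-|\,|W|} \;=\; d-\sigma.
\]
Hence $d(U,W) - d(U^-,W) \geq \sigma$, contradicting the uniformity bound $|d(U^-,W) - d| < \sigma$. Therefore $|U^-| \leq \sigma|U|$, which gives exactly the claimed bound on the complement $\{u \in U : |\Gamma(u)\cap W| > (d-\sigma)|W|\}$.

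For the second inequality I would repeat the argument with
\[
U^+ := \{u \in U : |\Gamma(u) \cap W| \geq (d+\sigma)|W|\},
\]
assume $|U^+| > \sigma|U|$, and derive $d(U^+,W) \geq d+\sigma$ by the same averaging, again contradicting $\sigma$-uniformity. There is essentially no obstacle here beyond verifying that $W$ qualifies as an admissible $W'$; the only small point to notice is the (inclusive versus strict) interpretation of the defining inequalities, which is easily accommodated by allowing a negligible slack or by taking $U^-$ with strict inequality in its definition, since the statement of the lemma itself uses strict inequalities.
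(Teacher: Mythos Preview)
Your argument is correct and is precisely the standard proof of this fact. The paper itself does not give a proof but simply refers the reader to Diestel~\cite{Die05}, where exactly this contradiction argument appears.
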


\begin{definition}[Equipartitions]
An {\it equipartition} of $V(G)$ into $m$ parts is a partition $V_1, \dots, V_m$ such that
$\lfloor \frac n m \rfloor \leq |V_i| \leq \lceil \frac n m \rceil$, for $1\leq i \leq m$, where $n = |V(G)|$. The partition is $\sigma$-uniform if
$(V_i, V_j)$ is $\sigma$-uniform for all but $\sigma {m \choose 2}$ pairs $1 \leq i < j \leq m$.
\end{definition}

\begin{lemma}[Szemer\'edi Regularity Lemma~\cite{sze75}]
Let $\sigma$ be a real number in $(0, 1)$ and let $l$ and $N$ be two natural numbers. Then there exists $L=L(l,\sigma)$ such
that every graph with at least $N$ vertices has a $\sigma$-uniform equipartition into $m$ parts for some $l \leq m \leq L$.
\end{lemma}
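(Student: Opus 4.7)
The plan is to apply Szemerédi's Regularity Lemma to the underlying graph $\bar D$, extract a reduced graph $R$ that contains (a copy of) the underlying graph $\bar H$, and then use the $bias_\nu$ hypothesis to show that the bipartite subgraphs corresponding to the edges of $\bar H$ are uniform in both of their orientations, with densities large enough that a standard counting/embedding lemma produces the desired oriented copy of $H$.

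More concretely, given $H$ and $\nu \in (0,1)$, I would first fix a density threshold $d_0 = d_0(H,\nu)$ and a uniformity parameter $\sigma = \sigma(H,\nu,d_0)$ small enough that a standard counting-lemma argument in a $\sigma$-uniform bipartite pair of density at least $\nu d_0/(1+\nu)$ still produces $\Omega(\prod|V_{i_s}|)$ copies of $H$ spread across the relevant parts. Next, I would fix $\beta = \beta(H)$ large enough that every graph on $m$ vertices with at least $(\beta - \sigma - d_0)\binom{m}{2}$ edges contains the underlying graph $\bar H$; Turán's theorem handles the general case, and Kővári--Sós--Turán handles the bipartite case (which is why arbitrarily small $\beta$ works there). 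Finally, I would pick $\epsilon = \epsilon(H,\nu,\sigma,d_0,L)$ small enough, where $L = L(\sigma,1/\sigma)$ comes from the Regularity Lemma, so that throughout the argument the quantities $\bar e(U',W')$ we consider exceed $\epsilon e$, and hence the bias hypothesis can be invoked on them.

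Now apply the Regularity Lemma to $\bar D$ to obtain an $\sigma$-uniform equipartition $V_1,\dots,V_m$ with $m\le L$. Build the reduced graph $R$ on $[m]$ by keeping a pair $ij$ iff $(V_i,V_j)$ is $\sigma$-uniform and has density at least $d_0$. The usual count shows that at most $(\sigma + d_0 + 1/m)\binom{n}{2}$ edges of $\bar D$ lie outside reduced-graph edges, so the density of $R$ is at least $\beta - \sigma - d_0 - o(1)$, which by the choice of $\beta$ forces $R$ to contain $\bar H$ as a subgraph. Fix such an embedding of $\bar H$ into $R$, say on parts $V_{i_1},\dots,V_{i_h}$.

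For each edge $i_si_t$ of $\bar H$, the pair $(V_{i_s},V_{i_t})$ is $\sigma$-uniform with density $\ge d_0$, and by the choice of $\epsilon$ we have $\bar e(V_{i_s},V_{i_t})\ge d_0(n/m)^2 > \epsilon e$. Applying $bias_\nu(D)\le \epsilon e$ to both directions of the pair gives $e(V_{i_s},V_{i_t})\ge \nu\bar e(V_{i_s},V_{i_t})/(1+\nu)$ and symmetrically, so both oriented bipartite subgraphs have density at least $\nu d_0/(1+\nu)$. Moreover, applying the bias hypothesis to any pair $U'\subseteq V_{i_s},W'\subseteq V_{i_t}$ with $|U'|\ge \sigma|V_{i_s}|,|W'|\ge \sigma|V_{i_t}|$ (whose undirected edge count is at least $(d_0-\sigma)\sigma^2(n/m)^2 > \epsilon e$, by choice of $\epsilon$) similarly forces balanced orientation there; combined with $\sigma$-uniformity of $(V_{i_s},V_{i_t})$ this yields $\sigma'$-uniformity of both oriented bipartite graphs for some $\sigma'=\sigma'(\sigma,\nu,d_0)\to 0$ as $\sigma\to 0$. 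I would then apply a standard embedding lemma for $\sigma'$-uniform bipartite pairs to find a copy of $H$ respecting the prescribed orientations.

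The delicate step is the transfer of uniformity in the final stage: one must pick $\sigma,\epsilon,d_0,\beta$ in the right order so that (a) the bias hypothesis applies to every sub-sub-pair that the counting/embedding lemma interrogates, (b) $\sigma'$ stays small enough for the embedding lemma to succeed given $H$, and (c) the reduced graph still has enough edges after discarding non-uniform and low-density pairs to guarantee $\bar H\subseteq R$. Once this parameter bookkeeping is arranged, the proof is a routine reduction to the undirected embedding lemma, with the $bias_\nu$ hypothesis doing the single job of converting undirected uniformity into (approximately balanced) directed uniformity.
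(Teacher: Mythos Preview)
Your proposal does not address the stated lemma. The statement in question is Szemer\'edi's Regularity Lemma itself, which the paper quotes from~\cite{sze75} and does not prove; it is used as a black box. What you have written instead is a proof sketch for Theorem~\ref{conjdense} (the appendix result that a dense oriented graph with small $bias_\nu$ contains $H$), in which the Regularity Lemma appears as a \emph{tool}, not as the conclusion. A proof of the Regularity Lemma would have to construct the partition via the standard energy-increment (or index-increment) argument; nothing in your write-up attempts this.

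If your intent was in fact to prove Theorem~\ref{conjdense}, then your outline is broadly along the same lines as the paper's: apply the Regularity Lemma to $\bar D$, form a reduced graph on the dense uniform pairs, find $\bar H$ there by an extremal argument, and then embed the oriented $H$ using the $bias_\nu$ hypothesis. The one substantive difference is in the embedding step. You propose to first upgrade each undirected $\sigma$-uniform pair to a pair of \emph{oriented} $\sigma'$-uniform bipartite graphs (by applying the bias hypothesis to all large sub-pairs) and then invoke a standard undirected embedding lemma. The paper instead proves a bespoke oriented embedding lemma (Lemma~\ref{subgraphH}) in which the bias condition is invoked \emph{inside} the greedy vertex-by-vertex embedding, at the moment one needs to bound the ``bad'' sets $Y_t$. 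Both routes work, but the paper's is slightly more direct: it avoids the separate ``transfer of uniformity'' step and the attendant bookkeeping for $\sigma'$, at the cost of redoing the embedding argument rather than citing it.
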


We are now ready to prove the following directed version of the embedding lemma in the presence of an appropriate condition on the bias.

\begin{lemma}\label{subgraphH}
Let $h$ and $m\in \mathbb N$ be two fixed integers with $m\geq h$, $H$ be an oriented graph on $h$ vertices, and $\nu$ a real number in  $(0,1)$. Let $D$ be a digraph with $e$ arcs, containing disjoint subsets $V_1,\dots,V_h$ of size $u$ or $u+1$ of the vertex set $V$ such that 
\begin{itemize}
\item[(i)] $u \geq \frac nm$;
\item[(ii)] For each arc $(v_i,v_j) \in E(H)$, $(V_i,V_j)$ is $\sigma$-uniform;
\item[(iii)] For each $i\neq j$ with $(i,j)\in E(H)$, $d(V_i,V_j)\geq \eta$ for some  positive real number $\eta>0$;
\item[(iv)] For each $i\neq j$ with $(i,j) \in E(H)$, $bias_\nu(D(V_i \cup V_j)) \leq \epsilon e$, where $\epsilon=\frac{\sigma^2(\frac \eta{1+\frac 1{\nu}}-\sigma)}{m^2}$.
\end{itemize}
 Furthermore, suppose that
$\bigl(\frac {\eta}{1+\frac 1\nu}-\sigma\bigr)^h\geq(h+1)\:\sigma$,  and finally $ \sigma u\geq 1$.
Then $D$ contains an oriented subgraph isomorphic to $H$.
\end{lemma}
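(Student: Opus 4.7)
The plan is to carry out a greedy embedding of $H$ into $D$ in the spirit of the classical embedding lemma, with one extra ingredient: a local directed-neighborhood estimate that converts the global bias hypothesis (iv) into a lower bound on the number of out-neighbors (or in-neighbors) of a typical vertex of $V_i$ in any large subset of $V_j$.

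The first step is to prove the following directed version of Lemma~\ref{regular}: for each arc $(i,j)\in E(H)$ and every $B\subseteq V_j$ with $|B|\geq \sigma|V_j|$, writing $\beta := \eta/(1+1/\nu)-\sigma$, the ``bad'' set $U^* := \{u\in V_i : |\Gamma^{+}(u)\cap B|<\beta|B|\}$ has size at most $\sigma|V_i|$. Suppose otherwise. By $\sigma$-uniformity of $(V_i,V_j)$ applied to $U^*$ and $B$, $\bar{e}(U^*,B)\geq (\eta-\sigma)|U^*||B|$, and since $e(U^*,B)\leq \beta|U^*||B|$ by definition of $U^*$, this yields $e(B,U^*)\geq (\eta-\sigma-\beta)|U^*||B| = \eta|U^*||B|/(1+\nu)$. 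The bias hypothesis (iv) then forces one of two alternatives. Either $e(U^*,B) > \nu\cdot e(B,U^*)\geq \nu\eta|U^*||B|/(1+\nu) = (\beta+\sigma)|U^*||B|$, directly contradicting $e(U^*,B)\leq \beta|U^*||B|$; or else $e(B,U^*)\leq \epsilon e$, which using $|U^*||B|\geq \sigma^2 u^2\geq \sigma^2 n^2/m^2$ and $e\leq n^2$ reduces to $\eta(\nu-1)/(1+\nu)\geq \sigma$, a condition that fails for every $\nu\in(0,1)$. The analogous bound for in-neighborhoods (using $\Gamma^{-}$ in place of $\Gamma^{+}$) follows by running the same argument with the roles of incoming and outgoing arcs swapped.

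With this local estimate in hand, I would perform the greedy embedding: process the vertices $v_1,\dots,v_h$ of $H$ in order, maintaining at step $t$ chosen images $x_1\in V_1,\dots,x_{t-1}\in V_{t-1}$ and candidate sets $C_s\subseteq V_s$ for $s\geq t$, where $C_s$ consists of those $y\in V_s$ such that the arc between $x_r$ and $y$ in $D$ has the same orientation as the arc between $v_r$ and $v_s$ in $H$, for every $r<t$ with $v_r,v_s$ adjacent in $H$. Each earlier refinement of $C_s$ was a restriction to either $\Gamma^{+}(x_r)\cap C_s$ or $\Gamma^{-}(x_r)\cap C_s$ of size at least $\beta|C_s|$, so $|C_s|\geq \beta^{h-1}|V_s|\geq \beta^{h-1}u$ at all times. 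To pick $x_t\in C_t$, apply the directed-neighborhood claim with $B=C_s$ (noting $|C_s|\geq \sigma|V_s|$ since $\beta^{h-1}\geq \sigma$ follows from $\beta^h\geq (h+1)\sigma$ and $\beta\leq 1$) for each of the at most $h-1$ values of $s>t$ with $v_t,v_s$ adjacent in $H$; this rules out at most $\sigma|V_t|$ choices of $x_t$ per such edge, hence at most $(h-1)\sigma|V_t|$ choices in total. The hypothesis $\beta^h\geq (h+1)\sigma$ precisely ensures $|C_t|\geq \beta^{h-1}|V_t|>(h-1)\sigma|V_t|$, so a valid $x_t$ exists; the condition $\sigma u\geq 1$ rules out degenerate regimes where these bounds become vacuous. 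After $h$ rounds, $x_1,\dots,x_h$ form the desired oriented copy of $H$ in $D$.

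The main technical hurdle is the first step: extracting a local estimate about directed neighborhoods from the global bias hypothesis. Its proof hinges on the algebraic identity $\nu\eta/(1+\nu)=\beta+\sigma$ built into the definition of $\beta$ (which kills the ``ratio'' branch of the bias dichotomy), together with the choice $\epsilon=\sigma^2\beta/m^2$ (which kills the ``threshold'' branch), both of which crucially require $\nu<1$. Once that estimate is in place, the rest of the argument is a standard partite greedy embedding with union bounds, presenting no genuine further difficulty.
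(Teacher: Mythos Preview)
Your proposal is correct and follows essentially the same approach as the paper: a greedy partite embedding maintaining candidate sets, where the key step is bounding the ``bad'' set of vertices with too few correctly-oriented neighbours in the current candidate set via a two-case dichotomy coming from the bias hypothesis combined with $\sigma$-uniformity. The paper interleaves this bad-set bound with the induction, concluding both cases by contradicting $\sigma$-uniformity, whereas you isolate it as a standalone directed-neighbourhood lemma and conclude one case by contradicting the definition of $U^*$ and the other by the algebraic impossibility $\eta(\nu-1)/(1+\nu)>\sigma$; these are cosmetic rearrangements of the same computation.
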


\begin{proof}
Let $\{v_1,\dots,v_h\}$ be the set of vertices of $H$.
We prove by induction on $h$ that the following property holds for all $l=0,\dots,h$:

\hspace{1cm}

\noindent {\it Property} $\mathcal{P}(l)$: There exists a sequence $x_1,\dots, x_l$ so that 
\begin{itemize}
\item[I.] for all $i=1,\dots,l$, $x_i \in V_i$; and 
\item[II.] for every $j$, $l < j \leq h$, there is a set $X^l_j \subset V_j$ of potential candidates for $x_j$ at stage $l$ of size larger than $\bigl(\frac \eta{1+\frac 1\gamma}-\sigma)^{N(j,l)}u$. More precisely, if we set $N^-(j,l):=\{\:x_i \:|\: 1 \leq i \leq l\: \textrm{and}\ v_iv_j \in E(H)\:\}$, $N^+(j,l):=\{\:x_i \:|\: 1 \leq i \leq l\: \textrm{and}\ v_jv_i \in E(H)\:\}$, and $N(j,l)=N^-(j,l)\cup N^+(j,l)$, then for any $y_j \in X^l_j$ and for every $x_i \in N(j,l)$, $x_iy_j \in E(D)$ or $y_jx_i\in E(D)$ provided that $x_i\in N^-(j,l)$ or $x_i\in N^+(j,l)$ respectively, and  moreover, $|X^l_j| \geq \bigl(\frac \eta{1+\frac 1\gamma}-\sigma)^{N(j,l)}u$.
\end{itemize}
\vspace{.2cm}
\noindent It is clear that the property $\mathcal{P}(0)$ holds, for this we just take $X^0_j = V_j$.

\noindent Suppose now that $\mathcal{P}(l)$ is true, we will prove that $\mathcal{P}(l+1)$ also holds.

Let $T^+$ be the set of indices $j$ for which we have an outgoing arc $v_{l+1}v_{j}$ from $v_{l+1}$ to $v_j$, i.e., 
$$T^+ = \{\:j > l + 1\:|\: v_{l+1}v_{j} \in E(H)\:\}.$$ 
Similarly, we define $T^-$ for the set of incoming arcs:
$$T^- = \{\:j > l + 1\:|\: v_{j}v_{l+1} \in E(H)\:\}.$$

 We should select $x_{l+1} \in X^l_{l+1}$ in such a way to have arcs from 
$x_{l+1}$ to the vertices of the set $X_j^{l+1}$ in the appropriate direction, and such that updating $X^l_j$ to $X^{l+1}_{j}$ we respect the condition $II$ of the property $P(l+1)$. In the following, let $Y_j$ be the
set of vertices of $X^l_{l+1}$ which does not have enough arcs to $X_j^l$ and
hence are not good candidates for $x_{l+1}$. 
More precisely,  for each $t \in T^+$ (resp. for each $t\in T^-$), we define the set $Y_t$ as follows 
$$ Y_t\::=\: \{\:y \in X^l_{l+1}\: |\:\:\: |\Gamma^+(y) \cap X^l_t| < \bigl(\frac \eta{1+\frac 1\nu}-\sigma\bigr)|X^l_t|\:\}$$
$$ (\textrm{resp.}\:Y_t\::=\: \{\:y \in X^l_{l+1}\: |\:\:\: |\Gamma^-(y) \cap X^l_t| < \bigl(\frac \eta{1+\frac 1\nu}-\sigma\bigr)|X^l_t|\:\}\:)\:.$$

By property $\mathcal{P}(l)$ and the hypothesis, we have: 
\begin{align*}
|X^l_t| & \geq (\frac {\eta}{1+\frac 1\nu}-\sigma)^{N(j,l)}|V_{t}| \geq (\frac {\eta}{1+\frac 1\nu}-\sigma)^h |V_{t}| \\
&\geq (h+1)\sigma |V_{t}|> \sigma |V_{t}| >\sigma u\:.
\end{align*}

It follows from Definition~\ref{definition:uniform}, the fact that $bias_\nu(X^l_t \cup Y_t)\leq \epsilon e$, and the choice of parameters that for all $t\in T^+\cup T^-$, $|Y^t|\leq \sigma |V_{l+1}|$. To see this, suppose for the sake of a contradiction that this is not the case, 
i.e., $|Y_t| > \sigma |V_{l+1}|$. By the choice of $\epsilon$ and $|V_{l+1}| \geq \frac n{m}$, this means that $|Y_t||X^l_t| > \sigma^2u^2\geq \frac {\sigma^2}{m^2}n^2\geq \frac 1{(\frac{\eta}{1+\frac 1\nu}-\sigma)}\epsilon e$. This means that
$\epsilon e < (\frac \eta{1+\frac 1\nu}-\sigma) |X_t^l||Y_t|.$

From now on without loss of generality we will suppose that $t \in T^+$ (the other case follows similarly). Two cases can happen:
\begin{itemize}
\item if $e(X^l_t,Y_t)<\epsilon e$, then we have $e(X^l_t,Y_t) <  (\frac{\eta}{1+\frac 1\nu}-\sigma)|X^l_t||Y_t| < \frac 1\nu (\frac{\eta}{1+\frac 1\nu}-\sigma)|X^l_t||Y_t|$. 
By the definition of $Y_t$, we also have $e(Y_t,X^l_t) < (\frac \eta{1+\frac 1\nu}-\sigma)|X^l_t||Y_t|.$ It follows that 
$d(X^l_t,Y_t) = \frac {\bar e(X_t^l,Y_t)}{|X_t^l||Y_t|} < \eta -\sigma.$  We saw that $X^t_l > \sigma |V_t|$ and by our assumption we have $Y_t > \sigma |V_{l+1}|$. These all will provide a contradiction to the $\sigma$-regularity, since $d(V_t,V_{l+1})\geq \eta.$  
\item if $e(X^l_t,Y_t) \geq \epsilon e$ then by the condition on the bias,
$e(Y_t,X^l_t) \geq \nu e(X^l_t,Y_t)$, and this means that $e(X^l_t,Y_t)\leq \frac 1{\nu}e(Y_t,X^l_t)< \frac 1{\nu}(\frac \eta{1+\frac 1\nu}-\sigma)|X_t^l||Y_t|$. 
So 
$$\overline{e}(X^l_t,Y_t)=e(X^l_t,Y_t)+ e(X^l_t,Y_t)\leq (1+1/\nu)(\eta/(1+1/\nu)-\sigma)|X^l_t||Y_t|.$$ 
This implies that $d(X^l_t,Y_t)\leq (1+1/\nu) (\eta/(1+1/\gamma)-\sigma) < (\eta-\sigma)$ and this be again a contradiction as in the first case.
\end{itemize}

We infer that $Y_t\leq \sigma |V_{l+1}|$.
Therefore $|X^l_{l+1}\setminus (\cup_{t\in T^-\cup T^+}Y^t)|\geq
(\eta/(1+1/\nu)-\sigma)^{N(l+1,l)}|V_{l+1}| -|T^-\cup T^+|\sigma |V_{l+1}|\geq ((\eta/(1+1/\nu)-\sigma)^{N(l+1,l)}-h \sigma) |V_{l+1}|\geq \sigma u\geq 1$, and so the set $X^l_{l+1}\setminus (\cup_{t\in T^-\cup T^+}Y^t)$ is not empty. This means that we can select $x_{l+1} \in X^l_{l+1}\setminus (\cup_{t\in T^-\cup T^+}Y^t)$ in such a way to respect the condition I of $\mathcal P(l+1)$. By the choice of $x_{l+1}$, it is easy to verify that by taking $X^{l+1}_t = X^l_t \cap \Gamma(x_{l+1})$ for $t\in T^-\cup T^+$ and $X^{l+1}_t=X^l_t$ for $t \notin T^-\cup T^+$, we respect also the condition II of $\mathcal P(l+1)$. And so $\mathcal{P}(l+1)$ is true.
The above induction shows that $\mathcal P(l)$ holds for all $l, 0\leq l \leq h$, so $x_1, \dots, x_h$ can
be found as desired. These vertices form a copy of $H$.
\end{proof}

\begin{proof}[Proof of Theorem \ref{conjdense}]
We consider an oriented graph $H$ on $h$ vertices, a real number $\nu \in (0,1)$, a sufficiently large integer number $n$ ($n\geq N$, for $N$ being fixed later),  a digraph $D$ on $n$ vertices with $e=\beta n^2$ ($\beta$ to be fixed later) arcs whose bias is at most $\nu \epsilon e$ ($\epsilon$ to be fixed later).  In what follows, for an oriented graph $D$, we denote by $\overline D$ the corresponding non-oriented graph obtained by forgetting the orientation of $D$.

We now apply Szemer\'edi's regularity lemma to $\overline{D}$ for $l$ and some
$\sigma$ to be fixed later. We set $l$ sufficiently large but fixed such that a graph $G$ on at least $l$ vertices with $(1-\sigma)\bigl (^l_2\bigr)$ edges contains a copy of $\overline H$ as subgraph.
We obtain a $\sigma$-uniform equipartition of $V(G)$ into $m$ parts $V_1,\dots,V_m$
with $l\leq m \leq L$. 
We require $\beta$ to satisfy: 
$$\beta n^2 \geq \eta \frac {n^2}2 +\frac {n^2}m+ ex(m,\overline H) (\frac nm)^2,$$ 
for some small enough $\eta$.
When $H$ is non bipartite, $ex(m,h) \geq (1-\sigma)\bigl(^m_2\bigr)$, and it gives a lower bound on $\beta$ in terms of $\eta$ and $\sigma$ (as we will see $\eta$ and $\sigma$ are correlated in such a way that $\eta$ small enough will imply $\sigma$ small enough and $\sigma <\eta$). When $H$ is bipartite, we can choose $\beta$ as small as we want as long as $\eta$ is also chosen small (it just implies $n$ to be big enough).

The idea is that from this partition we construct a graph $G$, in which each vertex represents a  part $V_i$ of the corresponding partition. Two vertices of
$G$ are linked together if the corresponding parts have a density of at least
$\eta$ in between them (i.e., $(v_i,v_j) \in E(G)$ iff $d(V_i,V_j)\geq \eta$) for some $0<\eta<1$ which will be chosen according to the hypothesis of Lemma~\ref{subgraphH}. As long as $\eta$ is not too
big, there is some $\beta$ that ensures us that $e(G) \geq ex(m,\overline{H})$.
Consequently we get a copy of $\overline{H}$ in $G$, wlog it uses the vertices
$v_1, \dots,v_h$, $v_i\in V_i$, and we add $m-h$ vertices $v_{h+i}\in V_{h+i}$.

We will apply Lemma \ref{subgraphH} with appropriate values of $\epsilon$, $\eta$ and $\sigma$. We fix $l$ large enough and  we choose $\eta$ small enough such that a graph $G$ with $(1-\eta^h)\bigl(^m_2)$ edges contains a copy of $\overline H$ for every $m\geq l$. Once $l$ and $\eta$ are fixed, we choose $\sigma$ such that
$\bigl(\frac \eta{1+\frac 1\nu}-\sigma\bigr)^h\geq (h+1) \sigma$. Remark that such a positive $\sigma$ always exists and $\sigma<\eta$. To see this, remark that the continuous function $f(x)=\bigl(\frac \eta{1+\frac 1\nu}-x\bigr)^h-(h+1) x$ has a positive value at $x=0$ and so for some small $x$ close to zero it will still remain positive. We apply Szemer\'edi's regularity lemma to $\sigma$ and $l$ and $\overline D$ on $n$ vertices with $\beta n^2$ edges ($\beta$ chosen as above with respect to this choice of $\sigma$ and $\eta$) for $n$ large enough. There exists $L$ such that a partition to $m$ parts exists with $l\leq m\leq L$. We choose $N$ large enough such that $\sigma \frac NL \geq 1$. Remark that $u\geq \frac NL$ in Lemma~\ref{subgraphH} so that $\sigma u \geq 1$. Let us choose $\epsilon = \frac {\sigma^2(\frac \eta{1+\frac 1\nu}-\sigma)}{m^2}$. We claim that for these choices of parameters every oriented graph $D$ on $n\geq N$ vertices with $bias(D) \leq \nu \epsilon e$ contains a copy of $H$. Let $V_1,\dots,V_h$ be the parts of the equipartition which provide a copy of $\overline H$. We apply lemma~\ref{subgraphH} to this data. We should verify that the conditions of lemma are verified:
\begin{itemize}
\item[(i)] We have easily $u \geq \frac nm$;
\item[(ii)\&(iii)] For each arc $(i,j) \in H$, the pair $(V_i,V_j)$ is $\sigma$-uniform and $d(V_i,V_j) \geq \eta$. This is true because we find the copy of $H$ in the graph $G$ formed by the pairs which were $\sigma$-uniform and have the required density condition;
\item[(iv)] For each arc $(i,j)$, $bias_\nu(D(V_i,V_j)) \leq \epsilon e$, where $\epsilon = \frac {\sigma^2(\frac \eta{1+\frac 1{\nu}}-\sigma)}{m^2}$. This is true because $D$ has this property, and so $D(V_i,V_j)$ inherits this property from $D$.
\end{itemize} 
Furthermore, the conditions $\bigl(\frac \eta{1+\frac 1\nu}-\sigma\bigr)^h\geq(h+1)\sigma$ and $\sigma u \geq 1$ are also verified. So we can apply Lemma~\ref{subgraphH} to find a copy of $H$ in $D$.
\end{proof}

A result from N. Alon and A. Shapira \cite{AS04} says that if we have an oriented graph such that
we have to remove $\epsilon n^2$ arcs before it becomes $H$-free, then it has
many copies of $H$. The exact statement of their theorem is the following:

\begin{theorem}[Alon and Shapira~\cite{AS04}]\label{Alon}
For every fixed $\ep$ and $h$, there is a constant $c(\ep,h)$ with the following
property: for every fixed digraph $H$ of size $h$, and for every digraph $D$ of
a large enough size $n$, from which we need to delete $\ep n^2$ arcs to make it
$H$-free, $D$ contains at least $c(\ep,h) n^h$ copies of $H$.
\end{theorem}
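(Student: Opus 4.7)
The plan is to adapt the standard proof of the triangle removal lemma to the directed setting, via Szemer\'edi's Regularity Lemma and a directed counting lemma. The argument has three stages: regularize, clean, and count.

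First, I would apply a two-colored version of Szemer\'edi's Regularity Lemma (treating forward and reverse arcs as two colors) to obtain an equipartition $V_1, \ldots, V_m$ of $V(D)$ with $l \le m \le L(\sigma, l)$, such that all but a $\sigma$-fraction of pairs are simultaneously $\sigma$-uniform for the forward-arc subgraph and for the reverse-arc subgraph. Choosing $\sigma$ small enough in terms of $\ep$, I then clean $D$ to a subgraph $D'$ by deleting: (i) all arcs inside a single part $V_i$ (at most $n^2/m$ arcs); (ii) all arcs in non-$\sigma$-uniform pairs (at most $\sigma n^2$ arcs); and (iii) all arcs $\vec{xy}$ with $x \in V_i$, $y \in V_j$ lying in a regular pair whose directed density $e(V_i, V_j)/(|V_i| |V_j|)$ is less than $\eta := \ep/4$ (at most $\eta n^2$ arcs). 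For $\sigma \le \ep/4$ and $m \ge 4/\ep$, the total number of deleted arcs is strictly less than $\ep n^2$. Since $D$ requires at least $\ep n^2$ deletions to become $H$-free, $D'$ still contains a copy of $H$; locating it gives parts $V_{i_1}, \ldots, V_{i_h}$ (which we may take to be distinct, after discarding a negligible diagonal contribution) such that for every arc $(v_a, v_b) \in E(H)$, the pair $(V_{i_a}, V_{i_b})$ is $\sigma$-uniform with directed density at least $\eta$.

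The final step is a directed counting lemma, asserting that under these hypotheses the number of copies of $H$ with $v_t$ in $V_{i_t}$ is at least $c' (n/m)^h$, where $c' = c'(\sigma, \eta, h) > 0$. This is proved by a greedy embedding argument identical in spirit to Lemma \ref{subgraphH}: one embeds the vertices of $H$ one at a time, maintaining at each stage large candidate sets in the remaining parts and using a directed analogue of Lemma \ref{regular} to shrink each candidate set only by a factor of order $(\eta - \sigma)$ at each step. Crucially, no bias hypothesis is needed here; the directed density lower bound $\eta$ is itself an input. Since $m$ depends only on $\ep$ and $h$, this produces at least $c(\ep, h) n^h$ copies of $H$ in $D$.

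The main obstacle is the directed analogue of Lemma \ref{regular}, which says that in a $\sigma$-uniform pair $(U, W)$ of directed density $\eta$, all but a $\sigma$-fraction of vertices $u \in U$ have $|\Gamma^+(u) \cap W| \ge (\eta - \sigma)|W|$. This does not follow from uniformity of the underlying graph alone (which only controls $\overline{e}(U, W)$); the point of using the two-colored Regularity Lemma is precisely to obtain $\sigma$-uniformity of the forward-arc bipartite subgraph, to which Lemma \ref{regular} may then be applied directly to yield the required lower bound on out-degrees. The remaining verifications are routine and complete the proof.
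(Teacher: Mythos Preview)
The paper does not prove this theorem at all: it is quoted from Alon and Shapira \cite{AS04} and used as a black box in the proof of Corollary~\ref{numberdense}. There is therefore no proof in the paper to compare your proposal against.

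That said, your sketch is correct and is essentially the argument in \cite{AS04} (the standard removal-lemma template transported to digraphs). You also correctly isolate the one genuine subtlety: $\sigma$-uniformity of the undirected pair $(V_i,V_j)$ controls only $\bar e(\cdot,\cdot)$ and says nothing about out-degrees, so one must regularize the forward-arc and reverse-arc bipartite graphs separately --- which is exactly what the two-coloured regularity lemma provides --- and then apply Lemma~\ref{regular} to the forward-arc graph. One point you gloss over deserves a sentence in a full write-up: after cleaning, a surviving copy of $H$ in $D'$ may place two non-adjacent vertices of $H$ in the same cluster, so the clusters $V_{i_1},\dots,V_{i_h}$ need not be distinct; the standard fix is to split each cluster into $h$ equal sub-clusters (regularity and the density lower bound are inherited) before invoking the counting lemma.
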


From this theorem and Theorem \ref{conjdense}, we may deduce the following
Corollary:

\begin{corollary}\label{numberdense}
For every $h$, every oriented graph $H$ on $h$ vertices, there exist
$\epsilon>0$, $0 < \beta < \frac{1}{2}$, $c(\ep,h,\beta)$ and an integer $N$ such that
every digraph $D$ on $n\geq N$ vertices, with more than $\beta n^2$ arcs and with $bias(D) \leq \epsilon e$ contains $c(\ep,h) n^h$ copies of $H$.
\end{corollary}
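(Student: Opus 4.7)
The plan is to derive Corollary~\ref{numberdense} from Theorem~\ref{conjdense} together with the Alon--Shapira removal-type Theorem~\ref{Alon}, via a contrapositive deletion argument. Given $H$ on $h$ vertices, first apply Theorem~\ref{conjdense} with $\nu=\nu_1:=1/4$ to obtain constants $\epsilon_1>0$, $0<\beta_1<1/2$, and $N_1$ such that any oriented graph on $n\ge N_1$ vertices with at least $\beta_1 n^2$ arcs and $bias_{\nu_1}\le \epsilon_1 e$ contains $H$. Then for the present Corollary take $\beta:=2\beta_1$ and $\epsilon:=\epsilon_1\beta_1/8$, introduce an auxiliary parameter $\epsilon_2:=\min(\beta_1, \epsilon_1\beta_1/8)$, and set $c(\epsilon,h,\beta):=c(\epsilon_2,h)$ as supplied by Theorem~\ref{Alon} (with $N$ the corresponding threshold, at least $N_1$).

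Arguing by contradiction, suppose some $D$ on $n\ge N$ vertices with $e(D)>\beta n^2$ and $bias(D)\le \epsilon\, e(D)$ contains fewer than $c(\epsilon_2,h)\, n^h$ copies of $H$. By the contrapositive of Theorem~\ref{Alon}, there is a subdigraph $D'\ssq D$ obtained by deleting at most $\epsilon_2 n^2$ arcs which is $H$-free. Since $\epsilon_2\le \beta/2$, we still have $e(D')\ge \beta_1 n^2$, so if in addition $bias_{\nu_1}(D')\le \epsilon_1 e(D')$, then Theorem~\ref{conjdense} applied to $D'$ yields a copy of $H$ in $D'$, contradicting its $H$-freeness.

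The one delicate step is this bias verification, and I expect it to be the main obstacle: arc deletion can in principle \emph{increase} $bias_{\nu_1}$, since a pair $(A,B)$ that failed to be $\nu_1$-biased in $D$ only because $e_D(B,A)$ was slightly too large may become $\nu_1$-biased in $D'$ after deletion of some arcs from $B$ to $A$. To control this, fix any $A,B\ssq V$ with $e_{D'}(B,A)\le \nu_1 e_{D'}(A,B)$. Then $e_D(A,B)\ge e_{D'}(A,B)$ and $e_D(B,A)\le \nu_1 e_{D'}(A,B)+\epsilon_2 n^2$. When $e_{D'}(A,B)\le 4\epsilon_2 n^2$, the choice of $\epsilon_2$ combined with $e(D')\ge \beta_1 n^2$ directly gives $e_{D'}(A,B)\le \epsilon_1 e(D')$. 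Otherwise $\epsilon_2 n^2\le e_{D'}(A,B)/4\le e_D(A,B)/4$, so $e_D(B,A)\le (\nu_1+1/4)\,e_D(A,B)\le e_D(A,B)/2$ (this is precisely why $\nu_1=1/4$ is chosen); the hypothesis then gives $e_{D'}(A,B)\le e_D(A,B)\le \epsilon\, e(D)\le \epsilon\, n^2/2 \le \epsilon_1 e(D')$, using $e(D)\le \binom{n}{2}$ and $e(D')\ge \beta_1 n^2$. This closes the reduction; all remaining bookkeeping (threshold for $N$, preservation of the density bound, etc.) is routine.
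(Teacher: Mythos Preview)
Your proof is correct and follows essentially the same route as the paper: invoke Theorem~\ref{conjdense} with a parameter $\nu<1/2$ (you use $\nu_1=1/4$, the paper uses $0.4$), then combine with the Alon--Shapira removal lemma via a deletion argument, the crux being that deleting few arcs cannot raise $bias_{\nu_1}$ above the required threshold. In fact you supply the bias-stability verification explicitly, whereas the paper's sketch merely asserts ``one can easily check that $bias_{0.4}(D')\le\epsilon e$''; your two-case split (small $e_{D'}(A,B)$ handled directly, large $e_{D'}(A,B)$ promoted to a $\tfrac12$-biased pair in $D$) is exactly the intended computation.

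One small bookkeeping point: setting $\beta:=2\beta_1$ need not satisfy $\beta<1/2$, since Theorem~\ref{conjdense} only guarantees $\beta_1<1/2$. The fix is trivial---choose any $\beta\in(\beta_1,1/2)$ and take $\epsilon_2\le\beta-\beta_1$ so that $e(D')\ge\beta_1 n^2$ is preserved; the paper makes the analogous adjustment when $\beta+\epsilon'\ge 1/2$.
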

\begin{proof}(Sketch of proof)
Let $h$ be an integer and $H$ an oriented graph on $h$ vertices, Theorem
\ref{conjdense} gives $\ep$, $\beta$ and $N$ such that every digraph $D$ on
$n\geq N$ vertices, with $\beta n^2$ arcs and with $bias_{\,0.4}(D) \leq  \epsilon
e$ contains $H$ as a subgraph.

Setting $\ep'=\ep\beta/10$, every digraph $D$ on $n\geq N$ vertices, with
$(\beta+\ep')n^2$ arcs and with $bias(D) =bias_{\,0.5}(D)\leq  \epsilon
e$ contains $H$ as a subgraph. 
On top of this, if we delete $\ep' n^2 $ arcs, the remaining oriented digraph $D'$ still contains $H$ as a
subgraph. Indeed $D'$ has more than $\beta n^2$ arcs, and one can easily check that $bias_{\,0.4}(D') \leq \epsilon e$, so Theorem~\ref{conjdense} ensures the existence of a copy of $H$.
  
  We may now apply Theorem \ref{Alon} which will imply the existence of $c(\ep',h) n^h$ copies of $H$ in $D$. Notice that if $\beta+\ep'\geq 1/2$, we can change the value of $\ep'$ to some smaller value in order to  have a number bellow $1/2$, and the reasoning remains unchanged.
\end{proof}

In other words, the above corollary says that given $h$, there exist $\epsilon>0$, $0 < \beta < \frac{1}{2}$, $c(\ep,h,\beta)$ and an integer $N$ such that every digraph $D$ on $n\geq N$ vertices, with $\beta n^2$ arcs and with $bias(D) \leq \epsilon e$ contains the correct order of any orientation of any graph on $h$ vertices.

\end{document}